\definecolor{green}{rgb}{0,0.8,0} % Redefines the color green.
\definecolor{deepgreen}{cmyk}{1,0,1,0.5}
\newcommand{\Del}[1]{}
\numberwithin{equation}{section}
\newtheorem{theorem}{Theorem}[section]
\newtheorem{lemma}[theorem]{Lemma}%[section]
\newtheorem{proposition}[theorem]{Proposition}%[section]
\newtheorem{remark}[theorem]{Remark}%[section]
\renewcommand{\div}{\mathrm{div}\,}
\newcommand{\bff}{{\bf f}}
\newcommand{\bfn}{{\bf n}}
\renewcommand{\hbar}{{\underline h}}
\newcommand{\bbR}{\mathbb R}
\newcommand{\calB}{\mathcal B}
\newcommand{\calC}{\mathcal C}
\newcommand{\calD}{\mathcal D}
\newcommand{\calE}{\mathcal E}
\newcommand{\scE}{{\mathscr{E}}}
\newsavebox{\@brx}
\newcommand{\llangle}[1][]{\savebox{\@brx}{\(\m@th{#1\langle}\)}%
	\mathopen{\copy\@brx\kern-0.5\wd\@brx\usebox{\@brx}}}
\newcommand{\rrangle}[1][]{\savebox{\@brx}{\(\m@th{#1\rangle}\)}%
	\mathclose{\copy\@brx\kern-0.5\wd\@brx\usebox{\@brx}}}
\begin{document}
	\title{On nonlinear instability of liquid Lane-Emden stars}
	\author{Zeming Hao
	\and Shuang Miao}
	\maketitle
	\begin{abstract}
 We establish a dynamical nonlinear instability of liquid Lane-Emden stars in $\bbR^{3}$ whose adiabatic exponents take values in $[1,\frac43)$. Our proof relies on a priori estimates for the free boundary problem of a compressible self-gravitating liquid, as well as a quantitative analysis of the competition between the fastest linear growing mode and the source. 
	\end{abstract}
	\section{Introduction}\label{1}
	A classical model for Newtonian stars is given by the compressible Euler-Poisson system, as an idealized self-gravitating gas or liquid surrounded by vacuum, kept together by self-consistent gravitational force. In this paper, we consider the compressible Euler-Poisson equations for a perfect fluid with no heat conduction and no viscosity. It is given by 
	\begin{align}\label{main eq}
	\begin{split}
	&\rho\left( u_{t}+u\cdot\nabla u\right) =-\nabla P
	-\rho\nabla\psi,\quad\quad\quad \ in \   \mathcal{B}(t) \\
	&\rho_{t}+\div(\rho u)=0 ,\quad \quad\quad\quad\quad\quad\quad\ \ in \   \calB(t)
		\end{split}
	\end{align}
here \(u\) is the fluid velocity, 
\(\rho\) is the fluid density,  $P$ is the fluid pressure, and \(\psi\) is the gravitational potential. All these quantities are functions of spacetime variable $(x,t)$.  \(\psi\) is defined as
\begin{align}\label{potential}
\psi(t,x):=-\int_{\mathcal{B}(t)}\frac{\rho(t,y)}{|x-y|}dy,
\end{align}
so that with\(\chi_{\mathcal{B}(t)}\) denoting the characteristic function of \(\mathcal{B}(t)\)
\begin{align*}
\Delta\psi(t,x)=4\pi \rho\chi_{\mathcal{B}(t)}.
\end{align*}
The fluid is initially supported on a compact domain \(\mathcal{B}_{0}\subset\mathbb{R}^{3}\). The domain $\calB(t)$ occupied by fluid does not have a fixed shape and as the system evolves \(\mathcal{B}(t)\) changes in time. We consider a free boundary problem such that there is no surface tension on $\partial\calB(t)$ and any fluid particle on the boundary always stays on the boundary in the evolution. Therefore the following boundary conditions hold:
	\begin{align}\label{boundary conditions}
 P=0\quad \textrm{on}\quad \partial\mathcal{B}(t),\quad
\partial_{t}+u\cdot\nabla\in  \,T(t,\partial\mathcal{B}(t)).
\end{align}
There are four equations (the scalar continuity equation plus the vector momentum equation) but five unknown functions (\(\rho,P\) and \(u\)) in \eqref{main eq}. To close the system, we need to specify an equation of state. The model we consider is that of a barotropic fluid where  the pressure is a function of the fluid density only and is expressed through the following \emph{polytropic} equation of state
\begin{align}\label{eq of state}
P=K\rho^{\gamma}-C,
\end{align}
for some constants \(K,C>0\) and \(1\le\gamma\le 2\). The fact that the model \eqref{main eq}-\eqref{boundary conditions} describes the motion of a \emph{liquid} is reflected by the boundary condition $P=0$ on $\partial\calB(t)$ and the equation of state \eqref{eq of state}. In particular \eqref{eq of state} implies that on $\partial\calB(t)$ the fluid density is a positive constant $\rho_{0}>0$. Without loss of generality, we can take \(K=C=1\).  As we shall see, the constant \(\gamma\), called the adiabatic index, plays a crucial role in our analysis.

The Euler-Poisson system \eqref{main eq}-\eqref{boundary conditions} has steady-state solutions called Lane-Emden stars. In the case of radial symmetry, the steady-state equation becomes an ODE. Lam \cite{L} has shown that liquid Lane-Emden stars are linearly unstable when $1\le\gamma<\frac43$ and the density at the center is large enough. We shall study the nonlinear dynamical behavior of the Lane-Emden stars in this regime.
\subsection{History and background}
The Euler-Poisson system with the polytropic equation of state has played a crucial role in astrophysics. It is used to model stellar structure and evolution by Chandrasekhar \cite{Chan}, Shapiro, and Teukolsky \cite{ST}. When the pressure and the gravity are perfectly balanced, the Euler-Poisson system with the polytropic equation of state has an important class of time-independent steady state solutions known as the Lane-Emden stars. For \(\gamma=\frac{4}{3}\), astrophysicists Goldreich and Weber \cite{GW} found a special class of expanding and collapsing solutions that model stellar collapse and expansion such as supernova expansion. To understand the physical interest corresponding to these special solutions, it is important to have an existence theory for the Euler and Euler-Poisson equation. 
	
In the gaseous case, where the fluid density vanishes on the boundary, the local existence for the free boundary problem of the Euler-Poisson system \eqref{main eq}-\eqref{boundary conditions} (or Euler system for which the self-gravity is not present) has been extensively studied, see \cite{CS,JMas,IT,GL} and references therein. In the liquid case, the corresponding local existence for the Euler-Poisson system was proved in \cite{GLL} (see also \cite{Lind05,Trak09} for the Euler system, and \cite{Oliy,MSW} for relativistic liquid), where the proof relies on delicate energy estimates and optimal elliptic estimates in terms of boundary regularity. The local well-posedness theory provides a solid foundation for studying the stability of Lane-Emden stars.

There has been development on the problem of gaseous Lane–Emden stars. The linear stability was studied in \cite{Lin,JMak}, in which the authors show that Lane–Emden stars in \(\mathbb{R}^{3}\) are linearly unstable when \(1<\gamma<\frac{4}{3}\) and linearly stable when \(\frac{4}{3}\le\gamma <2\). Jang \cite{Jang08,Jang14} prove the full nonlinear, dynamical instability of the steady profile for \(\frac{6}{5}\le\gamma<\frac{4}{3}\) using a bootstrap argument and nonlinear weighted energy estimates. But the nonlinear stability in the range \(\frac{4}{3}<\gamma <2\) has not been fully understood and remains open. By the linear stability, under the assumption that a global-in-time solution exists, nonlinear stability for \(\frac{4}{3}<\gamma <2\) was established by Rein in \cite{Rein} using a variational approach based on the fact that the steady states are minimizers of an energy functional. See other approaches in \cite{LS09,LS08}. In the critical case \(\gamma=\frac{4}{3}\) the Lane-Emden star is nonlinearly unstable despite the conditional linear stability - in fact, Deng, Liu, Yang, and Yao \cite{DLYY} show that the energy of a steady state is zero and any small perturbation can make the energy positive and cause part of the gas go to infinity. Despite the above advances for gaseous stars,  only recently a few results have been established for the stability problem of Lane–Emden stars for a liquid.

Assuming spherical symmetry, so that \(\vec{u}(x,t)=u(r,t)\hat{r}\) where \(\hat{r}=\frac{x}{|x|}\), the continuity equation in \(\eqref{main eq}\) becomes
\begin{align}
D_{t}\rho+\rho\frac{\partial_{r}(r^{2}u)}{r^{2}}=\partial_{t}\rho+u\partial_{r}\rho+\rho\frac{\partial_{r}(r^{2}u)}{r^{2}}=0.
\end{align}
The momentum equation reads
\begin{align}\label{momentum eq SS}
D_{t}u+\partial_{r}\psi+\frac{1}{\rho}\partial_{r}P=\partial_{t}u+u\partial_{r}u+\partial_{r}\psi+\frac{1}{\rho}\partial_{r}P=0.
\end{align}
The gravitational potential satisfies the Poisson equation
\begin{align}\label{Poisson eq SS}
4\pi\rho=\Delta\psi=\frac{1}{r^{2}}\partial_{r}(r^{2}\partial_{r}\psi).
\end{align}
We put \(\eqref{Poisson eq SS}\) into the momentum equation \(\eqref{momentum eq SS}\) to get
\begin{align}
\partial_{t}u+u\partial_{r}u+\frac{4\pi}{r^{2}}\int_{0}^{r}s^{2}\rho(s)ds+\frac{1}{\rho}\partial_{r}p=0.
\end{align}
Now we write the Euler-Poisson system \(\eqref{main eq}\) in  Lagrangian coordinates. Let \(\eta(y,t)\) be the radial position of the fluid particle at time \(t\) so that 
\begin{align*}
\partial_{t}\eta=u\circ\eta \ \ \ \  \text{with} \ \ \ \ \eta(y,0)=\eta_{0}(y).
\end{align*}
Here \(\eta_{0}\) is not necessarily the identity map and depend on the initial density profile and in fact. Then we have the Lagrangian variables
\begin{align*}
&\upsilon=u\circ\eta \ \ (Lagrangian\ velocity)\\
&f=\rho\circ\eta\ \ (Lagrangian\ density)\\ 
&\varphi=\psi\circ\eta\ \ (Lagrangian\ potential)\\
&J=\frac{\eta^{2}}{y^{2}}\partial_{y}\eta\ \ (Jacobian\ determinant\ (renormalized)).
\end{align*}
So the continuity equation in Lagrangian coordinate is 
\begin{align*}
\partial_{t}f+f\frac{\partial_{t}J}{J}=0\ \ \ \ and\ so \ \ \ \ \partial_{t}\log(fJ)=0\ \ \Rightarrow \ \ fJ=f_{0}J_{0}.
\end{align*}
And the momentum equation becomes
\begin{align*}
\partial_{t}^{2}\eta+\frac{4\pi}{\eta^{2}}\int_{0}^{y}s^{2}(f_{0}J_{0})ds+\frac{1}{f_{0}J_{0}}\frac{\eta^{2}}{y^{2}}\partial_{y}\left( f_{0}J_{0}\frac{3y^{2}}{\partial_{y}\eta^{3}}\right) ^{\gamma}=0.
\end{align*}
Then we consider the linearized equation of Euler-Poisson system \(\eqref{main eq}-\eqref{boundary conditions}\) for perturbation. Assume \(\bar{\rho}\) is a steady state solution. For a small perturbation \(\rho_{0}=\bar{\rho}+\varepsilon\) and \(u_{0}=\upsilon\), we can pick \(\eta_{0}\) such that  \(f_{0}J_{0}=\bar{\rho}\). Let \(\eta(y, t)=y(1+\zeta(y,t))\) and \(\sigma=\log\frac{\rho}{\bar{\rho}}\), we obtain the linearized equation for the perturbation
\begin{align}\label{linearized eq SS}
\left\{
\begin{aligned}
&\partial_{t}\upsilon+\frac{4}{\bar{\rho}}\zeta\partial_{y}\bar{\rho}^{\gamma}-\gamma\frac{1}{\bar{\rho}}\partial_{y}(\bar{\rho}^{\gamma}(3\zeta+y\partial_{y}\zeta))=0\\
&\partial_{t}\sigma+\partial_{y}\upsilon+\frac{2\upsilon}{y}=0\\
&\partial_{t}\zeta-\frac{\upsilon}{y}=0
\end{aligned}
\right.
\end{align}
Lam \cite{L} shows that liquid Lane-Emden stars are linearly stable when \(\frac{4}{3}\le\gamma\le2\), or \(1\le\gamma<\frac{4}{3}\) for stars with small central density. It is also shown in \cite{L} that the above linearized equation admits a growing mode solution of the form \(\zeta(y,t)=e^{\lambda t}\chi(y)\) with \(\lambda>0\) when \(\gamma\in[1,\frac43)\) and the star possesses a large central density. The linear stability of the gaseous Lane-Emden stars does not depend on its central density. However in the liquid case, the stability of the Lane-Emden stars does depend on its central density, see \cite{L} and the references (for instance \cite{hadvzic2021stability,hadvzic2021turning}) therein. The following lemma (see \cite{guo}) is crucial for us to obtain nonlinear instability.
\begin{lemma}\label{1.4.}
	Assume that \(L\) is a linear operator on a Banach space \(X\) with norm \(\arrowvert\arrowvert\cdot\arrowvert\arrowvert\), and \(e^{tL}\) generates a strongly continuous semigroup on \(X\) such that
	\begin{align}
	\arrowvert\arrowvert e^{tL}\arrowvert\arrowvert_{(X,X)}\le C_{L}e^{\lambda t}
	\end{align}
	for some \(C_{L}\) and \(\lambda> 0\). Assume a nonlinear operator \(N(y)\) on \(X\) and another norm \(\arrowvert\arrowvert\arrowvert\cdot\arrowvert\arrowvert\arrowvert\), and constant \(C_{N}\), such that
	\begin{align}
	\arrowvert\arrowvert N(y)\arrowvert\arrowvert\le C_{N}\arrowvert\arrowvert\arrowvert y\arrowvert\arrowvert\arrowvert^{2}
	\end{align}
	for all \(y\in X\) and \(\arrowvert\arrowvert\arrowvert y\arrowvert\arrowvert\arrowvert <\infty\). Assume for any solution \(y(t)\) to the equation
	\begin{align}
	y'=Ly+N(y)
	\end{align}
	with \(\arrowvert\arrowvert\arrowvert y(t)\arrowvert\arrowvert\arrowvert\le \sigma\), there exist \(C_{0},C_{\sigma}>0\) such that for any small \(\epsilon>0\), there exists \(C_{\epsilon}>0\) such that the following sharp energy estimate holds:
	\begin{align}\label{1.15}
	\arrowvert\arrowvert\arrowvert y(t)\arrowvert\arrowvert\arrowvert\le C_{0}\arrowvert\arrowvert\arrowvert y(0)\arrowvert\arrowvert\arrowvert+\int_{0}^{T}\epsilon\arrowvert\arrowvert\arrowvert y(s)\arrowvert\arrowvert\arrowvert+C_{\sigma}\arrowvert\arrowvert\arrowvert y(s)\arrowvert\arrowvert\arrowvert^{\frac{3}{2}}+C_{\epsilon}\arrowvert\arrowvert y(s)\arrowvert\arrowvert ds.
	\end{align}
	Consider a family of initial data \(y^{\delta}(0)=\delta y_{0}\) with \(\arrowvert\arrowvert y_{0}\arrowvert\arrowvert=1\) and \(\arrowvert\arrowvert\arrowvert y_{0}\arrowvert\arrowvert\arrowvert<\infty\) and let \(\theta_{0}\) be a sufficiently small (fixed) number. Then there exists some constant \(C>0\) such that if
	\begin{align}
	0\le t\le T^{\delta}\equiv\frac{1}{\lambda}\log\frac{\theta_{0}}{\delta}
	\end{align}
	we have
	\begin{align}\label{1.17}
	\arrowvert\arrowvert y(t)-\delta e^{Lt}y_{0}\arrowvert\arrowvert\le C[\arrowvert\arrowvert\arrowvert y_{0}\arrowvert\arrowvert\arrowvert^{2}+1]\delta^{2}e^{2\lambda t}.
	\end{align}
	In particular, if there exists a constant \(C_{p}\) such that \(\arrowvert\arrowvert e^{Lt}y_{0}\arrowvert\arrowvert\ge C_{p}e^{\lambda t}\) then at the escape time
	\begin{align}\label{1.18}
	\arrowvert\arrowvert y(T^{\delta})\arrowvert\arrowvert\ge\tau_{0}>0,
	\end{align}
	where \(\tau_{0}\) depends explicitly on \(C_{L},C_{N},C_{0},C_{\sigma},C_{p},\lambda,y_{0},\sigma\) and is independent of \(\delta\).
\end{lemma}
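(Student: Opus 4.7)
The plan is a standard Guo--Strauss bootstrap argument: I would use Duhamel's formula plus the semigroup bound to control the difference between the nonlinear solution and the linear flow in the weak norm $\|\cdot\|$, and then use the sharp energy estimate \eqref{1.15} to pin the strong norm $|||\cdot|||$ to the same $e^{\lambda t}$ growth rate as the dominant linear mode.

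Writing the mild formulation
\[ y(t) = \delta e^{Lt} y_0 + \int_0^t e^{L(t-s)} N(y(s))\,ds, \]
and setting $d(t):= y(t)-\delta e^{Lt}y_0$, the semigroup bound $\|e^{tL}\|\le C_L e^{\lambda t}$ combined with $\|N(y)\|\le C_N |||y|||^2$ gives
\[ \|d(t)\| \le C_L C_N \int_0^t e^{\lambda(t-s)} |||y(s)|||^2\,ds. \]
This delivers \eqref{1.17} as soon as one knows a linear-type bootstrap bound $|||y(s)|||\le M\,\delta e^{\lambda s}$ on $[0,T^\delta]$, since then $\|d(t)\|\le (C_L C_N M^2/\lambda)\,\delta^2 e^{2\lambda t}$.

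The crux of the proof is therefore to close this bootstrap. Assuming $|||y(s)|||\le M\delta e^{\lambda s}$ on $[0,T]$ with $T\le T^\delta$, one applies \eqref{1.15}, treating each term on the right-hand side as follows. The $\epsilon|||y(s)|||$ term is trivially bounded by $\epsilon M\delta e^{\lambda s}$. For $|||y(s)|||^{3/2}$, the bootstrap plus the definition of $T^\delta$ (which enforces $\delta e^{\lambda s}\le\theta_0$) yields
\[ |||y(s)|||^{3/2}\le M^{3/2}\,\theta_0^{1/2}\,\delta e^{\lambda s}. \]
For the weak-norm term one uses the triangle inequality and the Duhamel bound to get $\|y(s)\|\le C_L\delta e^{\lambda s}+\|d(s)\|\le C_1\delta e^{\lambda s}$ for a constant $C_1$ that depends on $M$ and $\theta_0$ but not on $\delta$. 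Inserting these three estimates into \eqref{1.15} and integrating yields
\[ |||y(t)|||\le C_0\delta|||y_0||| + \frac{1}{\lambda}\bigl(\epsilon M + C_\sigma M^{3/2}\theta_0^{1/2}+C_\epsilon C_1\bigr)\delta e^{\lambda t}. \]
Choosing $M\ge 2C_0|||y_0|||+4C_\epsilon C_1/\lambda$ first, then $\epsilon$ so that $\epsilon/\lambda\le 1/8$, and finally $\theta_0$ so that $C_\sigma M^{3/2}\theta_0^{1/2}/\lambda\le M/8$, improves the right-hand side to at most $(M/2)\delta e^{\lambda t}$, thereby closing the bootstrap and confirming $|||y(t)|||\le M\delta e^{\lambda t}$ on all of $[0,T^\delta]$.

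Finally, at $t=T^\delta$ one has $\delta e^{\lambda T^\delta}=\theta_0$, so \eqref{1.17} becomes $\|d(T^\delta)\|\le C\theta_0^2$. Under the lower bound $\|e^{Lt}y_0\|\ge C_p e^{\lambda t}$, the reverse triangle inequality yields
\[ \|y(T^\delta)\|\ge C_p\theta_0 - C\theta_0^2\ge \tau_0>0 \]
provided $\theta_0$ is chosen small enough, which is exactly \eqref{1.18}. The one genuinely delicate point is the handling of the $C_\epsilon\|y(s)\|$ contribution in \eqref{1.15}: it carries no small factor and hence must be reabsorbed into the bootstrap budget by exploiting that $\|y\|$ is already controlled by the linear flow plus a quadratically small correction, at the price of taking $M$ large compared with $C_\epsilon C_1/\lambda$. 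All the other constants in the final bound are generated by tracking how $C_L,C_N,C_0,C_\sigma,C_p$ enter the above choices of $M,\epsilon,\theta_0$, and are manifestly independent of $\delta$.
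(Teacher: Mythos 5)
The paper does not actually prove this lemma: it is quoted verbatim from the cited reference \cite{guo} (the Guo--Strauss-type bootstrap lemma), so there is no in-paper proof to compare against. Your reconstruction is the standard argument that the reference uses — Duhamel plus the semigroup bound for the weak norm, a bootstrap ansatz \(|||y(s)|||\le M\delta e^{\lambda s}\) closed via the sharp energy estimate \eqref{1.15}, and a reverse triangle inequality at the escape time — and it is essentially correct, including the key observation that the non-small term \(C_{\epsilon}\|y\|\) must be fed back through the Duhamel bound rather than absorbed by smallness.

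One point needs repair: the order in which you fix the constants is circular as written. You choose \(M\ge 2C_{0}|||y_{0}|||+4C_{\epsilon}C_{1}/\lambda\) \emph{before} choosing \(\epsilon\), but \(C_{\epsilon}\) is determined by \(\epsilon\) (it blows up as \(\epsilon\to 0\)), and moreover \(C_{1}=C_{L}(1+C_{N}M^{2}\theta_{0}/\lambda)\) itself depends on \(M\). The fix is standard but should be stated: first fix \(\epsilon\) once and for all (e.g.\ \(\epsilon=\lambda/8\), as the paper's remark after the lemma suggests), which fixes \(C_{\epsilon}\); then choose \(M\) using only the \(M\)-independent part of \(C_{1}\), namely \(M\ge 2C_{0}|||y_{0}|||+8C_{\epsilon}C_{L}/\lambda\); finally choose \(\theta_{0}\) small enough that the three residual contributions \(C_{\sigma}M^{3/2}\theta_{0}^{1/2}/\lambda\), \(C_{\epsilon}C_{L}C_{N}M^{2}\theta_{0}/\lambda^{2}\), and the constraint \(M\theta_{0}\le\sigma\) (needed so that the hypothesis of the energy estimate \eqref{1.15} applies along the whole bootstrap interval — you omit this condition) are all satisfied. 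With that reordering the continuity argument closes and the rest of your proof, including the derivation of \eqref{1.17} and \eqref{1.18}, goes through.
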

\begin{remark}
	\(y(t)\) represents the perturbation away from the
	trivial steady state and \(\lambda\) is the fastest growing mode of the linear operator \(L\). It is worth noting that The small constant \(\epsilon\) only needs to be less than, say, \(\frac{\lambda}{4}\). %This property is \Blue{used} in this paper.
\end{remark}
\begin{remark}
	In our application, \(\arrowvert\arrowvert\arrowvert\cdot\arrowvert\arrowvert\arrowvert\) represents higher order Sobolev norms (including internal and boundary norms). \(\arrowvert\arrowvert\cdot\arrowvert\arrowvert\) represents \(L^{2}\) norm in the fluid interior.	 \(\arrowvert\arrowvert\arrowvert\cdot\arrowvert\arrowvert\arrowvert\) is a stronger norm than \(\arrowvert\arrowvert\cdot\arrowvert\arrowvert\) and the assumption \(\arrowvert\arrowvert N(y)\arrowvert\arrowvert\le C_{N}\arrowvert\arrowvert\arrowvert y\arrowvert\arrowvert\arrowvert^{2}\) means that the nonlinearity is not bounded in the weaker norm \(\arrowvert\arrowvert\cdot\arrowvert\arrowvert\). This is a key analytical
	difficulty in many instability problems.
\end{remark}
Lemma \(\ref{1.4.}\) provides a general framework to study nonlinear instability, see other approaches \cite{VF,ChSu} for study of nonlinear instability in free boundary problems. The most delicate part of applying this framework is that the higher order Sobolev norm \(\arrowvert\arrowvert\arrowvert\cdot\arrowvert\arrowvert\arrowvert\) does not create a faster growth rate than the weaker counterpart \(\arrowvert\arrowvert\cdot\arrowvert\arrowvert\), over the time scale of \(0\le t\le T^{\delta}\). In other words, roughly speaking, the stronger norm \(\arrowvert\arrowvert\arrowvert\cdot\arrowvert\arrowvert\arrowvert\) is controlled reversely by the weaker norm \(\arrowvert\arrowvert\cdot\arrowvert\arrowvert\). This is the key to close the instability argument.
\subsection{Statement of the main result}
In this paper we  prove the nonlinear instability for the liquid Lane-Emden stars. To state the main theorem, we consider the Euler-Poisson system \eqref{main eq}-\eqref{boundary conditions} in a perturbative form around the equilibrium state \(\bar{\rho}\) (Let \(\rho=\bar{\rho}+\tilde{\rho}\), then the unknowns of the system \eqref{main eq}-\eqref{boundary conditions} become \((u,\tilde{\rho})\)). Our main result establishes the full nonlinear dynamical instability of the Lane-Emden equilibrium. In the statement of the following theorem, for any \(\delta>0\) and \(\theta_{0}>\delta\), we define
\begin{align}
T^{\delta}\equiv\frac{1}{\sqrt{\mu_{0}}}\log\frac{\theta_{0}}{\delta},
\end{align}
where \(\sqrt{\mu_{0}}\) is the sharp linear growth rate.
\begin{theorem}\label{main th}
	Assume that \(1\le\gamma<\frac{4}{3}\) and the stars have large central densities. For any sufficiently small \(\delta>0\), there exists a family of initial data \((u^{\delta}(0),\tilde{\rho}^{\delta}(0))=\delta(u_{0},\tilde{\rho}_{0})\) and \(T^{\delta}>0\) such that the perturbed solutions \((u^{\delta}(t),\tilde{\rho}^{\delta}(t))\) to the Euler-Poisson system \eqref{main eq}-\eqref{boundary conditions} for \(t\in[0,T^{\delta}]\) satisfy
	\begin{align*}
	\arrowvert\arrowvert(u^{\delta}(T^{\delta}),\tilde{\rho}^{\delta}(T^{\delta}))\arrowvert\arrowvert_{L^{2}}\ge \tau_{0}>0,
	\end{align*}
	where \(\tau_{0}\) is independent of \(\delta\).
\end{theorem}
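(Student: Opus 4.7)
The plan is to apply the abstract nonlinear instability framework of \lem{1.4.} to the Euler--Poisson free boundary problem, expressed in Lagrangian coordinates around the equilibrium \(\bar{\rho}\). I would first recast \eqref{main eq}--\eqref{boundary conditions} on the fixed reference ball via the flow map \(\eta\), as in the derivation leading to \eqref{linearized eq SS}. Writing the perturbation as \(y=(\upsilon,\tilde{\rho})\) (equivalently as \((\upsilon,\zeta)\)), the Lagrangian system decomposes as \(y'=Ly+N(y)\), where \(L\) is the linear operator analyzed by Lam and \(N(y)\) collects the remaining terms, which are polynomial in \(y\) and \(\nabla y\) and at least quadratic. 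I take \(\|\cdot\|\) as the interior \(L^{2}\) norm and \(|||\cdot|||\) as a higher-order weighted Sobolev norm adapted to the liquid free boundary in the spirit of \cite{GLL}, together with boundary regularity for \(\partial\calB(t)\); local-in-time existence of \(y(t)\) for smooth initial data on the required interval is supplied by the well-posedness theorem of \cite{GLL}.

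For the linear piece, Lam's work in \cite{L} provides a growing eigenmode \(\chi\) of \(L\) with eigenvalue \(\sqrt{\mu_{0}}>0\) under the hypothesis \(1\le\gamma<\frac{4}{3}\) and sufficiently large central density, together with the associated self-adjoint variational structure. Standard semigroup arguments then give \(\|e^{tL}\|_{X\to X}\le C_{L}e^{\sqrt{\mu_{0}}t}\), while choosing an initial vector \(y_{0}\) with non-trivial projection onto \(\chi\) yields the matching lower bound \(\|e^{Lt}y_{0}\|\ge C_{p}e^{\sqrt{\mu_{0}}t}\). The nonlinear estimate \(\|N(y)\|\le C_{N}|||y|||^{2}\) is then a direct consequence of Sobolev embedding, because the higher norm controls enough derivatives and the free-surface geometry to dominate the \(L^{2}\) norm of a quadratic expression in \(y\) and \(\nabla y\).

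The heart of the argument, and where the main obstacle lies, is the sharp a priori estimate \eqref{1.15},
\begin{align*}
|||y(t)|||\le C_{0}|||y(0)|||+\int_{0}^{T}\Bigl(\epsilon\,|||y(s)|||+C_{\sigma}|||y(s)|||^{3/2}+C_{\epsilon}\|y(s)\|\Bigr)\,ds,
\end{align*}
with \(\epsilon>0\) arbitrarily small at the cost of enlarging \(C_{\epsilon}\). I would establish this by adapting the weighted high-order energy method of \cite{GLL} for the liquid free boundary Euler system to the self-gravitating setting: tangential derivatives are controlled by direct energy identities, normal derivatives are recovered through the momentum equation using the full elliptic-type regularity available from the non-degenerate boundary condition \(\rho|_{\partial\calB(t)}=\rho_{0}>0\), and the non-local potential \(\psi\) is treated as a perturbation via Newtonian potential bounds that cost only lower-order interior norms of \(\tilde{\rho}\), and hence fit inside the \(C_{\epsilon}\|y\|\) term. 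The delicate and hardest point is to arrange the commutators of time derivatives with the convective and geometric operators so that the coefficient in front of \(|||y|||\) on the right can be made smaller than, say, \(\sqrt{\mu_{0}}/4\); this requires repeatedly using the equation to trade a spatial derivative for a time derivative and then absorb it into the weaker \(\|y\|\) term, which is precisely the control of the strong norm by the weak norm mentioned in the remarks following \lem{1.4.}.

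With the semigroup bound, the growing-mode lower bound, the quadratic control of \(N\), and the sharp energy inequality all in hand, \lem{1.4.} applies directly. Choosing \((u^{\delta}(0),\tilde{\rho}^{\delta}(0))=\delta(u_{0},\tilde{\rho}_{0})\) with \((u_{0},\tilde{\rho}_{0})\) having unit \(L^{2}\) norm, finite higher norm, and a non-trivial projection onto \(\chi\), the bound \eqref{1.17} combined with \(\|\delta e^{Lt}y_{0}\|\ge C_{p}\delta e^{\sqrt{\mu_{0}}t}\) at \(t=T^{\delta}=\mu_{0}^{-1/2}\log(\theta_{0}/\delta)\) yields the escape estimate \(\|(u^{\delta}(T^{\delta}),\tilde{\rho}^{\delta}(T^{\delta}))\|_{L^{2}}\ge\tau_{0}>0\) with \(\tau_{0}\) independent of \(\delta\), proving \thm{main th}.
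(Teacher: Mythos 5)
Your overall architecture — Lemma \ref{1.4.} plus a sharp energy inequality of the form \eqref{1.15}, with $\|\cdot\|$ the interior $L^{2}$ norm and $|||\cdot|||$ a high-order norm, and initial data with nontrivial projection onto Lam's growing mode — matches the paper's. The technical implementation you propose (Lagrangian coordinates and the weighted energy method of \cite{GLL}) differs from the paper's, which instead derives an Eulerian quasilinear wave system for $u$ and $\varepsilon$ via the acoustical metric following \cite{MSW}, controls higher norms through $B^{k}$-commutation and elliptic estimates, and handles the gravitational potential on the free boundary with the Hilbert transform and Clifford analysis. That difference of route is legitimate in principle, and your treatment of $\psi$ is at least plausibly repairable.

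The genuine gap is in how you propose to obtain the small coefficient $\epsilon$ in front of $|||y(s)|||$ in \eqref{1.15}. You assert this can be achieved by ``arranging the commutators'' and ``trading a spatial derivative for a time derivative,'' i.e.\ by purely structural manipulations. That cannot work: after any such rearrangement there remain top-order \emph{linear} source terms whose coefficient is a fixed function of the background, of the schematic form $C_{1}(\bar{\rho})\sim\big(c_{s}^{2}+c_{s}c_{s}'\big)\nabla\bar{\bm{\uprho}}\sim-\frac{1}{\bar{\rho}}\partial_{y}\bar{\rho}^{\gamma}$, and this coefficient is \emph{large} precisely in the unstable regime, since it grows with the central density $\kappa$ (the paper shows $|C_{1}(\bar{\rho}_{\kappa})|\lesssim\kappa^{\gamma/2}$, or $\kappa^{3/5}$ when $\gamma=\tfrac65$). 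No commutator identity makes it small. The theorem holds only because the fastest growth rate $\mu_{0}$ grows \emph{faster} in $\kappa$ — e.g.\ $\mu_{0}\sim\kappa$ for $\tfrac65<\gamma<\tfrac43$ and $\mu_{0}\sim\kappa/\log\kappa$ for $\gamma=\tfrac65$ — so that $|C_{1}(\bar{\rho}_{\kappa})|\le\epsilon\mu_{0}$ for $\kappa$ large; establishing this requires the variational characterization \eqref{5.49} of $\mu_{0}$, the self-similar scaling of the Lane–Emden family, the asymptotics \eqref{333}, and the case analysis over the three ranges of $\gamma$ carried out in Section \ref{6} (including the explicit test function \eqref{444} for $1\le\gamma<\tfrac65$). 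This quantitative competition between $C_{1}(\bar{\rho})$ and $\mu_{0}$ is the actual heart of the proof and is entirely absent from your proposal; without it the energy inequality cannot be put into the form required by Lemma \ref{1.4.}, and the instability argument does not close.
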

	\begin{remark}
		The above result shows that no matter how small the amplitude of initial perturbed
		data is taken to be, we can find a solution such that the corresponding energy escapes at a time \(T^{\delta}\): there is no stabilization of the system. We conclude from this that liquid Lane-Emden stars with large central density for \(1\le\gamma<\frac{4}{3}\) are nonlinearly unstable.
	\end{remark}
\begin{remark}
	We note that the nonlinear dynamics of any general perturbation is dominated by the fastest linear growing mode \(\sqrt{\mu_{0}}\) up to the time scale of \(T^{\delta}\). This implies that the nonlinear instability is essentially driven by the linear instability. The hypothesis of Theorem \(\ref{main th}\) guarantees the occurrence of the linear instability and a fast linear growth rate.
\end{remark}
\subsection{Main ideas for the proof}
	Assume the exact solution \(y_{\ast}\) of Euler-Poisson system \(\eqref{main eq}-\eqref{boundary conditions}\) is written as
\begin{align*}
	y_{\ast}=\bar{y}+y_{L}+y_{e},
\end{align*}
where \(\bar{y}\) is the steady state, \(y_{L}\) is the solution of linearized equation and \(y_{e}\) is the error. Based on linear instability, we have \(\Arrowvert y_{L}\Arrowvert\sim\delta e^{\lambda t}\), where \(\delta\) is the magnitude of the initial perturbation. Suppose there exists small enough \(\epsilon>0\) and constants \(C_{\sigma},C_{\epsilon}>0\) such that the perturbation solution \(y:=y_{L}+y_{e}\) satisfies the following sharp energy estimate
\begin{align*}
	\arrowvert\arrowvert\arrowvert y(t)\arrowvert\arrowvert\arrowvert\le\delta+\int_{0}^{t}\epsilon\lambda \arrowvert\arrowvert\arrowvert y(s)\arrowvert\arrowvert\arrowvert+C_{\sigma}\arrowvert\arrowvert\arrowvert y(s)\arrowvert\arrowvert\arrowvert^{\frac{3}{2}}+C_{\epsilon}\arrowvert\arrowvert y(s)\arrowvert\arrowvert ds.
\end{align*}
Then according to Lemma \(\ref{1.4.}\), we have \(\Arrowvert y\Arrowvert\sim\frac{1}{2}\delta e^{\lambda t}\), which means linear solution \(y_{L}\) dominate the nonlinear correction \(y_{e}\). Therefore at the escape time \(t=T^{\delta}\), we shall see
\begin{align*}
	\Arrowvert y(T^{\delta})\Arrowvert\gtrsim\frac{1}{2}\delta e^{\lambda T^{\delta}}=\frac{1}{2}\theta_{0},
\end{align*}
where \(\theta_{0}\) is independent of \(\delta\), and instability happen.

Therefore an appropriate a priori estimate for the perturbed solution is the key to prove the nonlinear instability. To obtain such an estimate, we follow the approach in \cite{MSW}. We start by deriving quasilinear system from compressible Euler-Poisson system \eqref{main eq}-\eqref{boundary conditions}. By introducing an acoustical metric $g$ and its corresponding wave operator $\Box_{g}$ (see \eqref{2.4}), we obtain a coupled quasilinear system for fluid velocity \(u\) and logarithmic density perturbation variable \(\varepsilon\) (see \(\eqref{2.5},\eqref{2.15},\eqref{2.20}\) and \(\eqref{2.24}\)):
\begin{equation}\label{1.19}
\left\{
\begin{aligned}
&\left(B^{2}+a\nabla_{n}^{\left( g\right) }\right)u^{i}=-c_{s}^{2}\nabla_{i} B\bm{\uprho}-B\nabla_{i}\psi\quad \textrm{on}\ \partial\calB(t),\\
&\square_{g}u^{i}=-\left(1+c_{s}^{-1}c'_{s}\right) \left( g^{-1}\right) ^{\alpha\beta}\partial_{\alpha}\bm{\uprho}\partial_{\beta}u^{i}+B\nabla_{i}\psi-2c_{s}^{-1}c'_{s}(B\bm{\uprho})\nabla_{i}\psi\quad \textrm{in}\ \calB(t),
\end{aligned}
\right.
\end{equation}
and
\begin{equation}\label{1.20}
\left\{
\begin{aligned}
\square_{g}\varepsilon=&-3c_{s}^{-1}c'_{s}\left( g^{-1}\right) ^{\alpha\beta}\partial_{\alpha}\bm{\uprho}\partial_{\beta}\varepsilon+\mathscr{Q}\quad\textrm{in}\ \calB(t),\quad\quad\varepsilon,\ B\varepsilon=0\ \ \textrm{on}\ \ \partial\calB(t), \\
\square_{g}B\varepsilon=&-(1+3c_{s}^{-1}c'_{s})\left( g^{-1}\right) ^{\alpha\beta}\left( \partial_{\alpha}\bm{\uprho}\right) \left( \partial_{\beta}B\varepsilon\right)-2c_{s}c'_{s}\delta^{ab}\left( \partial_{a}\bm{\uprho}\right) \left( \partial_{b}B\varepsilon\right)+\mathscr{P}\quad \textrm{in}\ \calB(t).
\end{aligned}
\right.
\end{equation}
Here $\mathscr{Q}, \mathscr{P}$ are lower order terms whose precise expressions are given in \eqref{lower order terms}.  $\bm{\uprho}$ is the logarithmic density defined to be $\log\frac{\rho}{\rho_{0}}$. $B$ is the vectorfield $\partial_{t}+u^{a}\partial_{a}$ and $c_{s}$ is the sound speed defined by $c_{s}:=\sqrt{\frac{dP}{d\rho}}$. In \cite{MSW} a similar quasilinear system was shown to be hyperbolic type, and a local-wellposedness result of free boundary hard phase fluids was obtained. For the fluid velocity equations \(\eqref{1.19}\), We multiply the boundary equation and interior equation by $\frac{1}{a}(Bu)$,  \(Bu\) respectively and integrate. By observing the signs of the boundary terms, we obtain the following energy in Lemma \(\ref{3.2.}\)
\begin{align*}
\int_{\calB(t)}\left| \partial_{t,x}u\right| ^{2}dx+\int_{\partial\calB(t)}\frac{1}{a}\left| Bu\right| ^{2}dS.
\end{align*}
For the wave equation for \(B\varepsilon\) with Dirichlet boundary conditions, we choose a suitable multiplier consisting of an appropriate linear combination of \(B\) and the normal \(n\) to $\partial\calB(t)$, and apply integration by parts in Lemma \(\ref{3.3.}\). The energy functional for \(\eqref{1.20}\) controls
\begin{align*}
\int_{\calB(t)}\left| \partial_{t,x}B\varepsilon\right| ^{2}dx+\int_{0}^{t}\int_{\partial\calB(\tau)}\left| \partial_{t,x}B\varepsilon\right| ^{2}dSd\tau.
\end{align*}
To control higher order Sobolev norms, we commute \(B^{k}\) with \(\eqref{1.19}\) and \(\eqref{1.20}\) (see \(\eqref{3.28}-\eqref{3.30}\)). Then we use elliptic estimates and the corresponding wave equation to control the higher order Sobolev norms in terms of the $L^{2}$-norms of $B^{k}$-derivatives (see details in Proposition \ref{5.2.}). For the most challenge gravitational potential in the source, we apply Hilbert transform and techniques in Clifford analysis to convert the derivative of \(\nabla \psi\) of any order into a function equal to zero on the boundary, which is treated in Section \(\ref{4}\). Then the energy functional \(\eqref{5.3}\) can control the \(L^{2}\) norm of the gravitational terms by using elliptic estimates repeatedly and controlling commutator errors. For other applications of Clifford analysis in fluid free boundary problems, see \cite{wu1999well, MS,ZZ}.  

To close the a priori estimate, we need to control the source carefully, which we divide into three types: \emph{the top order linear source}, \emph{the non-top order linear source} and \emph{the nonlinear source}. According to Lemma \ref{1.4.} and Sobolev embedding, the nonlinear source is almost in good forms. The details are given in Section \ref{5}.  For the linear source, we need to prove that their coefficients can be controlled by the fastest linear growth mode \(\mu_{0}\). By the standard Sobolev interpolation inequality, which helps us to transfer large coefficients of strong norm to above \(L^{2}\) norm, we can control the non-top order linear terms. Finally for the top order linear terms, as we shall show in Proposition \(\ref{5.1.}\), the coefficient \(C_{1}(\bar{\rho})\) in front of these terms is in form of
\begin{align}
C_{1}(\bar{\rho})\sim\left( c_{s}^{2}(\bar{\bm{\uprho}})+c_{s}(\bar{\bm{\uprho}})c'_{s}(\bar{\bm{\uprho}})\right)\nabla\bar{\bm{\uprho}} \sim\gamma\bar{\rho}^{\gamma-1}\nabla\bar{\bm{\uprho}}\lesssim-\frac{1}{\bar{\rho}}\partial_{y}\bar{\rho}^{\gamma}.
\end{align}
Observing the influence of the adiabatic index \(\gamma\) on the properties of the steady-state solution, we divide \(\gamma\) into three different intervals. Applying the basic relations \(\eqref{5.49}, \eqref{5.55}\) and asymptotic properties \(\eqref{333}\) satisfied by the steady-state solution, we prove that the coefficient \(C_{1}(\bar{\rho})\) can be dominated by the fastest linear growth mode \(\mu_{0}\) in all three intervals when the density of the star center is large enough. The details for treating the linear source are given in Section \ref{6}. Combined with the previous discussion and the a priori estimate \(\eqref{5.5}\), we finally prove that the liquid Lane-Emden stars are nonlinearly unstable.

\subsection{Outline of the paper} In Section \(\ref{2}\) we derive the quasilinear equations for fluid velocity \(u\) and logarithmic density perturbation variable \(\varepsilon\). In Section \(\ref{3}\), we derive the basic energy inequalities and higher order equations. In Section \(\ref{4}\), we introduce  Hilbert transform and describe how to apply them in the a priori estimates. Based on these, in Section \(\ref{5}\) we prove a sharp nonlinear energy estimate (see \ref{5.5}), and the integral terms of the right-hand side of \(\eqref{5.5}\) contain the three types of sources (i.e. the top order linear terms, the non-top order linear terms and the nonlinear terms). In Section \(\ref{6}\), we show that the coefficients of the linear terms on the right-hand side of \(\eqref{5.5}\) can be dominated by the fastest linear growth mode \(\mu_{0}\), which in fact proves our main result Theorem \(\ref{main th}\) by using a bootstrap argument.

\subsection*{Acknowledgment}
The authors would like to thank Qingtang Su and Yanlin Wang for stimulating discussions on this project. This work was supported by National Key R\& D Program of China 2021YFA1001700,  NSFC grant 12071360, and the Fundamental Research Funds for the Central Universities in China.

\section{Quasilinear equations for compressible Euler-Poisson system}\label{2}
We start by recalling the notation
\begin{align*}
\bm{\uprho}=\log(\rho/\rho_{0}), \ \ B=\partial_{t}+u^{a}\partial_{a},\ \ c_{s}^{2}=\frac{dP}{d\rho},
\end{align*}
where the constant $\rho_{0}>0$ is the boundary value of $\rho$. Then the compressible Euler-Poisson system \(\eqref{main eq}\) write
\begin{align}\label{2.1}
B\bm{\uprho}=&-\div u,\\ \label{2.2}
Bu^{i}=&-c_{s}^{2}\nabla_{i}\bm{\uprho}-\nabla_{i}\psi.
\end{align}
Let \(X:\mathbb{R}\times\calB(0)\to \calB\) be the Lagrangian parametrization of \(\calB=\calB(t)\). We can express the logarithmic density as the sum of the steady state and the perturbation
\begin{align*}
\bm{\uprho}(x,t)=\bar{\bm{\uprho}}(X^{-1}(x,t))+\varepsilon(x,t),\ \ \ \ where\ \ \ \ \bar{\bm{\uprho}}=\log\bar{\rho}
\end{align*}
By a slight abuse of notation, we often write \(\bar{\bm{\uprho}}(x,t)\) instead of \(\bar{\bm{\uprho}}(X^{-1}(x,t))\), therefore we have
\begin{align}\label{2.3}
B\bar{\bm{\uprho}}=0.
\end{align}
In order to obtain quasilinear equations, we introduce the following acoustical metric and the corresponding covariant wave operator
\begin{align}\label{2.4}
\begin{split}
&g:=-dt\otimes dt+c_{s}^{-2}\sum_{a=1}^{3}\left(dx^{a}-u^{a}dt\right)\otimes \left(dx^{a}-u^{a}dt\right),\\
&g^{-1}:=-B\otimes B+c_{s}^{2}\sum_{a=1}^{3}\partial_{a}\otimes\partial_{a},\\
&\square_{g}:=\frac{1}{\sqrt{|\det g|}}\partial_{\alpha} \left\{\sqrt{|\det g|}(g^{-1})^{\alpha\beta}\partial_{\beta}\right\}.
\end{split}
\end{align}
It is straightforward to verify that \(g^{-1}\) is the matrix inverse of \(g\), and in this case the vectorfield \(B\) is timelike, future-directed, orthogonal to \(\calB_{t}\), and with unit-length.

\begin{lemma}[\(\textbf{Quasilinear equation on the free boundary}\)]
	Given the system \(\eqref{2.1}-\eqref{2.2}\), we have the following equation on the boundary:
	\begin{align}\label{2.5}
	\left(B^{2}+a\nabla_{n}^{\left( g\right) }\right)u^{i}=-c_{s}^{2}\nabla_{i}B\varepsilon-B\nabla_{i}\psi,\quad \textrm{on}\quad \partial\calB(t).
	\end{align}
	Here \(a=\sqrt{\partial_{\alpha}\bm{\uprho}\partial^{\alpha}\bm{\uprho}}\).
	\begin{proof}
		According to \eqref{boundary conditions},we get
		\begin{align}\label{2.6}
		B\bm{\uprho}=0,\quad \textrm{on}\quad \partial\calB(t).
		\end{align}
		Taking a \(B\) derivative to the momentum equation \eqref{2.2}, using \(\eqref{2.6}\) and restricting it to the boundary we get
		\begin{align}\label{2.7}
		B^{2}u^{i}+c_{s}^{2}B\nabla_{i} \bm{\uprho}=-B\nabla_{i}\psi.
		\end{align}
		Computing the commutator \(\left[B,\nabla_{i}\right]\bm{\uprho}=B\nabla_{i} \bm{\uprho}-\nabla_{i} B\bm{\uprho}=-\nabla \bm{\uprho}\cdot\nabla u^{i} \) yields
		\begin{align}\label{2.8}
		B^{2}u^{i}-c_{s}^{2}\nabla \bm{\uprho}\cdot\nabla u^{i}=-\nabla_{i} B\bm{\uprho}-B\nabla_{i}\psi.
		\end{align}
		We now simplify the term \(\left( c_{s}^{2}\nabla \bm{\uprho}\cdot\nabla u^{i}\right) \). According to the expression of \(g^{-1}\) ,we have
		\(\partial^{a}=-B^{a}B+c_{s}^{2}\partial_{a}\) and \(\partial^{0}=-B\). Using \(\eqref{2.6}\) we get
		\begin{align*}
		\partial^{a}\bm{\uprho}=c_{s}^{2}\partial_{a}\bm{\uprho},\quad \partial^{0}\bm{\uprho}=0, \quad \textrm{on}\quad \partial\calB(t),
		\end{align*}
		and hence
		\begin{align*}
		c_{s}^{2}\nabla \bm{\uprho}\cdot\nabla u^{i}=\partial^{\alpha}\bm{\uprho}\partial_{\alpha}u^{i}=\partial_{\alpha}\bm{\uprho}\partial^{\alpha}u^{i}.
		\end{align*}
		We use \(\nabla^{\left( g\right) }\) for the (spacetime) gradient (with respect to \(g\)). Let \(n\) be the unit outward pointing (spacetime) normal to \(\partial\calB(t)\). Since \(\bm{\uprho}=0\ \textrm{on}\  \partial\calB(t),\ -\nabla^{\left( g\right) } \bm{\uprho}=an\) on the free boundary \(\partial\calB(t)\), with \(a=\sqrt{\partial_{\alpha}\bm{\uprho}\partial^{\alpha}\bm{\uprho}}\). According to \(\eqref{2.3}\) and simple calculations, we get
		\begin{align}\label{2.9}
		\left(B^{2}+a\nabla^{\left( g\right)}_{n}\right)u^{i}=-c_{s}^{2}\nabla_{i} B\varepsilon-B\nabla_{i}\psi,
		\end{align}
		which completes the proof of \(\eqref{2.5}\).
	\end{proof}
\end{lemma}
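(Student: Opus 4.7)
The plan is to derive \eqref{2.5} by applying the material derivative $B$ to the momentum equation \eqref{2.2}, restricting to the boundary, and then rewriting the leftover commutator term as the $g$-normal derivative $a\nabla^{(g)}_n u^i$ via the geometric structure of the free boundary.

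First I would extract the boundary implications of the free-boundary conditions. The polytropic equation of state \eqref{eq of state} together with $P=0$ on $\partial\calB(t)$ forces $\rho\equiv\rho_0$ there, hence $\bm{\uprho}=0$ on $\partial\calB(t)$. The second condition in \eqref{boundary conditions} states precisely that $B=\partial_t+u^a\partial_a$ is tangent to the spacetime hypersurface $\bigcup_t\partial\calB(t)$, so differentiating the identity $\bm{\uprho}\equiv 0$ on that hypersurface along $B$ yields $B\bm{\uprho}=0$ on $\partial\calB(t)$. Combined with \eqref{2.3}, this also gives $B\varepsilon=0$ on $\partial\calB(t)$.

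Next I would apply $B$ to \eqref{2.2}. Since $c_s^2=c_s^2(\rho)$, one has $Bc_s^2=(c_s^2)'(\rho)\,\rho\,B\bm{\uprho}$, which vanishes on the boundary, so on $\partial\calB(t)$ I obtain $B^2 u^i+c_s^2\, B\nabla_i\bm{\uprho}=-B\nabla_i\psi$. A direct computation of the commutator $[B,\partial_i]=-(\partial_i u^a)\partial_a$, together with the splitting $\bm{\uprho}=\bar{\bm{\uprho}}+\varepsilon$ and $B\bar{\bm{\uprho}}=0$, allows me to rewrite $c_s^2\, B\nabla_i\bm{\uprho}=c_s^2\nabla_i B\varepsilon-c_s^2(\nabla\bm{\uprho})\cdot(\nabla u^i)$, so that on $\partial\calB(t)$
\begin{equation*}
B^2 u^i-c_s^2(\nabla\bm{\uprho})\cdot(\nabla u^i)=-c_s^2\nabla_i B\varepsilon-B\nabla_i\psi.
\end{equation*}

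The hard part will be identifying the remaining bulk term with $a\nabla^{(g)}_n u^i$. Because $\bm{\uprho}$ vanishes on the spacetime boundary, its $g$-gradient $\nabla^{(g)}\bm{\uprho}=(g^{-1})^{\alpha\beta}\partial_\beta\bm{\uprho}\,\partial_\alpha$ is $g$-conormal there. Reading off components from \eqref{2.4} I find $(\nabla^{(g)}\bm{\uprho})^0=-B\bm{\uprho}$ and $(\nabla^{(g)}\bm{\uprho})^a=c_s^2\partial_a\bm{\uprho}-u^a B\bm{\uprho}$, so on $\partial\calB(t)$ the time component drops out by $B\bm{\uprho}=0$ and the $g$-magnitude reduces to $a=\sqrt{\partial_\alpha\bm{\uprho}\,\partial^\alpha\bm{\uprho}}=c_s|\nabla\bm{\uprho}|$. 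Since $\bm{\uprho}$ decreases from the interior to the vacuum, $\nabla^{(g)}\bm{\uprho}$ points inward, so with $n$ the outward $g$-unit normal I have $\nabla^{(g)}\bm{\uprho}=-an$ and hence $a\nabla^{(g)}_n u^i=-\partial^\alpha\bm{\uprho}\,\partial_\alpha u^i$, which on $\partial\calB(t)$ collapses (again by $B\bm{\uprho}=0$) to the spatial contraction $-c_s^2(\nabla\bm{\uprho})\cdot(\nabla u^i)$ appearing above. Substituting yields \eqref{2.5}. The main delicacy is the index bookkeeping in this last step: I must verify that the spacetime contraction really reduces to the spatial one produced by the commutator, using at each stage that $B\bm{\uprho}=0$ on $\partial\calB(t)$ to discard every unwanted cross term.
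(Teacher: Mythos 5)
Your proposal is correct and follows essentially the same route as the paper: apply $B$ to the momentum equation, use $B\bm{\uprho}=0$ on the boundary (both to kill $Bc_s^2$ and to reduce the spacetime contraction to the spatial one), commute $B$ with $\nabla_i$, and identify the leftover term with $a\nabla^{(g)}_n u^i$ via $\nabla^{(g)}\bm{\uprho}=-an$. Your explicit component computation of $\nabla^{(g)}\bm{\uprho}$ and the sign check on the normal are exactly the "simple calculations" the paper leaves implicit, so nothing is missing.
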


\begin{lemma}[\(\square_{g}\) \(\textbf{relative to the Cartesian coordinates}\)]
	The covariant wave operator \(\square_{g}\) acts on scalar functions \(\phi\)  via the following identity:
	\begin{align}\label{2.11}
	\square_{g}\phi=-BB\phi+c_{s}^{2}\delta^{ab}\partial_{a}\partial_{b}\phi+2c_{s}^{-1}c'_{s}\left(B\varepsilon\right)B\phi-\left(\partial_{a}u^{a}\right)B\phi-c_{s}^{-1}c'_{s}\left( g^{-1}\right)^{\alpha\beta}\left(\partial_{\alpha}\bm{\uprho}\right) \partial_{\beta}\phi.
	\end{align}
	\begin{proof}
		It is straightforward to compute using equations \(\eqref{2.4}\) that relative to Cartesian coordinates, we have
		\begin{align}\label{2.12}
		\det g=-c_{s}^{-6}
		\end{align}
		and hence
		\begin{align}\label{2.13}
		\sqrt{|\det g|}g^{-1}=-c_{s}^{-3}B\otimes B+c_{s}^{-1}\sum_{a=1}^{3}\partial_{a}\otimes\partial_{a}.
		\end{align}
		Using\(\eqref{2.4}\), \(\eqref{2.12}\) and \(\eqref{2.13}\), we compute that
		\begin{align}\label{2.14}
		\begin{split}
		\square_{g}\phi=&-c_{s}^{3}\left(B^{\, \alpha}\partial_{\alpha}\left( c_{s}^{-3}\right) \right) B^{\,\beta}\partial_{\beta}\phi-\left( \partial_{\alpha}B^{\,\alpha}\right) B^{\,\beta}\partial_{\beta}\phi-\left( B^{\,\alpha}\partial_{\alpha}B^{\,\beta}\right) \partial_{\beta}\phi\\
		&-B^{\,\alpha}B^{\,\beta}\partial_{\alpha}\partial_{\beta}\phi+c_{s}^{2}\delta^{ab}\partial_{a}\partial_{b}\phi-c_{s}c'_{s}\delta^{ab}\left( \partial_{a}\bm{\uprho}\right) \partial_{b}\phi.
		\end{split}
		\end{align}
		Finally, from \(\eqref{2.14}\), the expression for \(B\), the expression for \(g^{-1}\),  and simple calculations, we arrive at \(\eqref{2.11}\).
	\end{proof}
\end{lemma}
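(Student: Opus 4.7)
The proof will be a direct computation, unwinding the definition of $\square_g$ in Cartesian coordinates and regrouping terms into the canonical form stated on the right-hand side. First I would verify the two ingredients \eqref{2.12} and \eqref{2.13}: writing the metric components $g_{00} = -1 + c_s^{-2}|u|^2$, $g_{0a} = -c_s^{-2}u^a$, $g_{ab} = c_s^{-2}\delta_{ab}$ and expanding the determinant (for instance by Schur complement of the spatial block) yields $\det g = -c_s^{-6}$, hence $\sqrt{|\det g|} = c_s^{-3}$. Inserting the expression for $g^{-1}$ from \eqref{2.4} then produces \eqref{2.13} directly.

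Next I would apply the product rule to
\begin{align*}
\square_g \phi = c_s^3 \, \partial_\alpha\Bigl\{-c_s^{-3}B^{\alpha}(B\phi) + c_s^{-1}\delta^{ab}\delta_a^{\alpha}\partial_b\phi\Bigr\}.
\end{align*}
This produces exactly four categories of terms: second derivatives $-BB\phi$ and $c_s^2 \delta^{ab}\partial_a\partial_b\phi$; the divergence of $B$, namely $\partial_\alpha B^\alpha = \partial_a u^a$ (using $B^0 = 1$, $B^a = u^a$), which yields $-(\partial_a u^a)B\phi$; and scalar-factor derivatives of the form $c_s^3\partial_\alpha(c_s^{-3}) = -3c_s^{-1}c_s'\partial_\alpha\bm{\uprho}$ and $c_s^3 \partial_a(c_s^{-1}) = -c_s c_s'\partial_a\bm{\uprho}$ (by the chain rule, since $c_s$ depends only on $\bm{\uprho}$). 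Using the identity $B\bm{\uprho} = B\varepsilon$, which follows from the decomposition $\bm{\uprho} = \bar{\bm{\uprho}} + \varepsilon$ together with \eqref{2.3}, the first of these contributes $3c_s^{-1}c_s'(B\varepsilon)B\phi$ and the second contributes $-c_s c_s' \delta^{ab}(\partial_a\bm{\uprho})\partial_b\phi$.

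The only remaining step is to rewrite the combined bulk--time factor into the single term appearing in \eqref{2.11}. For this I would use the identity
\begin{align*}
(g^{-1})^{\alpha\beta}(\partial_\alpha\bm{\uprho})\partial_\beta\phi = -(B\varepsilon)(B\phi) + c_s^2\delta^{ab}(\partial_a\bm{\uprho})\partial_b\phi,
\end{align*}
again with $B\bm{\uprho} = B\varepsilon$. Multiplying by $-c_s^{-1}c_s'$ and comparing, the $3c_s^{-1}c_s'(B\varepsilon)B\phi$ produced by the first scalar-factor derivative splits as $2c_s^{-1}c_s'(B\varepsilon)B\phi$ plus a piece that absorbs with $-c_s c_s' \delta^{ab}(\partial_a\bm{\uprho})\partial_b\phi$ into $-c_s^{-1}c_s'(g^{-1})^{\alpha\beta}(\partial_\alpha\bm{\uprho})\partial_\beta\phi$, yielding exactly \eqref{2.11}.

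There is no real obstacle here beyond careful bookkeeping; the one step that requires a small observation is the regrouping in the last paragraph, where one must notice that the coefficient $3$ arising from $\partial_\alpha(c_s^{-3})$ is precisely what is needed to combine the temporal and spatial scalar-factor derivatives into the covariant expression $(g^{-1})^{\alpha\beta}(\partial_\alpha\bm{\uprho})\partial_\beta\phi$ with a leftover $2c_s^{-1}c_s'(B\varepsilon)B\phi$.
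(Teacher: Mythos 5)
Your proposal is correct and follows essentially the same route as the paper: compute $\det g=-c_s^{-6}$, expand $\square_g\phi=c_s^{3}\partial_\alpha\{-c_s^{-3}B^\alpha(B\phi)+c_s^{-1}\delta_a^{\alpha}\partial_a\phi\}$ by the product rule, and regroup using $B\bm{\uprho}=B\varepsilon$ and $(g^{-1})^{\alpha\beta}(\partial_\alpha\bm{\uprho})\partial_\beta\phi=-(B\varepsilon)(B\phi)+c_s^{2}\delta^{ab}(\partial_a\bm{\uprho})\partial_b\phi$. You merely make explicit the ``simple calculations'' the paper leaves implicit, and your bookkeeping (the $3=2+1$ split of the coefficient from $\partial_\alpha(c_s^{-3})$) is exactly right.
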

We now establish equation for \(u\).
\begin{lemma}[\(\textbf{Wave equation for}\) \(u\)]
	The compressible Euler-Poisson equations \(\eqref{2.1}-\eqref{2.2}\) imply the following covariant wave equation for the scalar-valued function \(u^{i}\):
	\begin{align}\label{2.15}
	\begin{split}
	\square_{g}u^{i}=-\left(1+c_{s}^{-1}c'_{s}\right) \left( g^{-1}\right) ^{\alpha\beta}\partial_{\alpha}\bm{\uprho}\partial_{\beta}u^{i}+B\nabla_{i}\psi-2c_{s}^{-1}c'_{s}(B\varepsilon)\nabla_{i}\psi.
	\end{split}
	\end{align}
	\begin{proof}
		First, we use \(\eqref{2.11}\) with \(\phi=u^{i}\) to deduce
		\begin{align}\label{2.16}
		\square_{g}u^{i}=-BBu^{i}+c_{s}^{2}\delta^{ab}\partial_{a}\partial_{b}u^{i}+2c_{s}^{-1}c'_{s}(B\varepsilon) Bu^{i}-\left( \partial_{a}u^{a}\right) Bu^{i}-c_{s}^{-1}c'_{s}\left( g^{-1}\right) ^{\alpha\beta}\partial_{\alpha}\bm{\uprho}\partial_{\beta}u^{i}.
		\end{align}
		Next, we use \(\eqref{2.1}-\eqref{2.2}\) to compute that
		\begin{align}\label{2.17}
		\begin{split}
		BBu^{i}=&-c_{s}^{2}\delta^{ia}B\partial_{a}\bm{\uprho}-2c_{s}c'_{s}B\bm{\uprho}\delta^{ia}\partial_{a}\bm{\uprho}-B\nabla_{i}\psi\\
		=&-c_{s}^{2}\delta^{ia}\partial_{a}\left( B\bm{\uprho}\right) +c_{s}^{2}\delta^{ia}\partial_{a}u^{b}\partial_{b}\bm{\uprho}-2c_{s}c'_{s}B\bm{\uprho}\delta^{ia}\partial_{a}\bm{\uprho}-B\nabla_{i}\psi\\
		=&c_{s}^{2}\delta^{ia}\delta_{c}^{b}\partial_{a}\left( \partial_{b}u^{c}\right) +c_{s}^{2}\delta^{ia}\partial_{a}u^{b}\partial_{b}\bm{\uprho}-2c_{s}c'_{s}B\bm{\uprho}\delta^{ia}\partial_{a}\bm{\uprho}-B\nabla_{i}\psi\\
		=&c_{s}^{2}\delta^{bc}\partial_{b}\partial_{c}u^{i}+c_{s}^{2}\delta^{ia}\partial_{c}\left( \partial_{a}u^{c}-\partial_{c}u^{a}\right) +c_{s}^{2}\delta^{ia}\partial_{a}u^{b}\partial_{b}\bm{\uprho}-2c_{s}c'_{s}B\bm{\uprho}\delta^{ia}\partial_{a}\bm{\uprho}-B\nabla_{i}\psi\\
		=&c_{s}^{2}\delta^{bc}\partial_{b}\partial_{c}u^{i}+c_{s}^{2}\delta^{ia}\partial_{c}\left( \partial_{a}u^{c}-\partial_{c}u^{a}\right)+c_{s}^{2}\left( \partial_{i}u^{b}-\partial_{b}u^{i}\right)\partial_{b}\bm{\uprho}\\
		&+c_{s}^{2}\delta^{ab}\partial_{a}u^{i}\partial_{b}\bm{\uprho}-2c_{s}c'_{s}B\bm{\uprho}\delta^{ia}\partial_{a}\bm{\uprho}-B\nabla_{i}\psi\\
		=&c_{s}^{2}\delta^{bc}\partial_{b}\partial_{c}u^{i}+c_{s}^{2}\delta^{ab}\partial_{a}u^{i}\partial_{b}\bm{\uprho}-2c_{s}c'_{s}B\bm{\uprho}\delta^{ia}\partial_{a}\bm{\uprho}-B\nabla_{i}\psi,        
		\end{split}
		\end{align}
		because \(\textrm{curl}u=0\).
		Next, substituting the RHS \(\eqref{2.17}\) for the term \(-BBu^{i}\) on RHS \(\eqref{2.16}\), we arrive at
		\begin{align}\label{2.18}
		\begin{split}
		\square_{g}u^{i}=\left\{-c_{s}^{2}\delta^{ab}\left( \partial_{b}\bm{\uprho}\right) \partial_{a}u^{i}-\left( \partial_{a}u^{a}\right) Bu^{i} \right\}-c_{s}^{-1}c'_{s}\left( g^{-1}\right) ^{\alpha\beta}\partial_{\alpha}\bm{\uprho}\partial_{\beta}u^{i}+B\nabla_{i}\psi-2c_{s}^{-1}c'_{s}(B\bm{\uprho})\nabla_{i}\psi.
		\end{split}
		\end{align}
		To handle the terms \(\left\{\cdot\right\}\) in \(\eqref{2.18}\), we use \(\eqref{2.1}\),\(\eqref{2.2}\) and \(\eqref{2.4}\) to obtain
		\begin{align}\label{2.19}
		-c_{s}^{2}\delta^{ab}\left( \partial_{b}\bm{\uprho}\right) \partial_{a}u^{i}-\left( \partial_{a}u^{a}\right) Bu^{i}=-c_{s}^{2}\delta^{ab}\left( \partial_{b}\bm{\uprho}\right) \partial_{a}u^{i}+\left( B\bm{\uprho}\right) Bu^{i}=-\left( g^{-1}\right) ^{\alpha\beta}\partial_{\alpha}\bm{\uprho}\partial_{\beta}u^{i}.
		\end{align}
		Finally, substituting \(\eqref{2.19}\) into \(\eqref{2.18}\), we conclude the desired equation \(\eqref{2.15}\).
	\end{proof}
\end{lemma}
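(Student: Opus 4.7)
My plan is to apply the formula for $\square_{g}$ from the preceding lemma with $\phi = u^{i}$ and then reduce everything using the Euler-Poisson equations \eqref{2.1}-\eqref{2.2}. That formula gives a principal piece $-BBu^{i} + c_{s}^{2}\delta^{ab}\partial_{a}\partial_{b}u^{i}$ plus lower-order contributions involving $(B\varepsilon)(Bu^{i})$, $(\partial_{a}u^{a})(Bu^{i})$, and $(g^{-1})^{\alpha\beta}\partial_{\alpha}\bm{\uprho}\,\partial_{\beta}u^{i}$. The heart of the argument is therefore to compute $BBu^{i}$ explicitly and to verify that the spatial Laplacian terms cancel, leaving only first-order expressions that can be reassembled into a $g^{-1}$-contraction plus the desired gravitational source.

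To compute $BBu^{i}$ I differentiate the momentum equation $Bu^{i} = -c_{s}^{2}\nabla_{i}\bm{\uprho} - \nabla_{i}\psi$ by $B$. The $B$-derivative of $c_{s}^{2}$ contributes $-2c_{s}c'_{s}(B\bm{\uprho})\nabla_{i}\bm{\uprho}$. Commuting $B$ past $\partial_{i}$ via $[B,\partial_{i}] = -(\partial_{i}u^{a})\partial_{a}$ and invoking the continuity equation $B\bm{\uprho} = -\partial_{a}u^{a}$ converts $B\nabla_{i}\bm{\uprho}$ into $-\partial_{i}\partial_{a}u^{a} - (\partial_{i}u^{a})\partial_{a}\bm{\uprho}$. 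At this point I use the curl-free assumption $\partial_{a}u^{b} = \partial_{b}u^{a}$ to rewrite $\partial_{i}\partial_{a}u^{a} = \delta^{ab}\partial_{a}\partial_{b}u^{i}$ and $\partial_{i}u^{a} = \partial_{a}u^{i}$. This is the crucial step: it produces a $c_{s}^{2}\delta^{ab}\partial_{a}\partial_{b}u^{i}$ inside $-BBu^{i}$ that cancels precisely the corresponding term coming from the $\square_{g}$ expansion.

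Once the Laplacian has cancelled, I combine the surviving pieces. The terms $-c_{s}^{2}\delta^{ab}(\partial_{b}\bm{\uprho})\partial_{a}u^{i}$ and $-(\partial_{a}u^{a})Bu^{i} = (B\bm{\uprho})Bu^{i}$ fit together by the definition of $g^{-1}$ in \eqref{2.4} to produce exactly $-(g^{-1})^{\alpha\beta}\partial_{\alpha}\bm{\uprho}\,\partial_{\beta}u^{i}$; adding the $-c_{s}^{-1}c'_{s}(g^{-1})^{\alpha\beta}\partial_{\alpha}\bm{\uprho}\,\partial_{\beta}u^{i}$ already present in the $\square_{g}$ formula yields the combined coefficient $-(1+c_{s}^{-1}c'_{s})$ of \eqref{2.15}. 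For the remaining $c'_{s}$-terms I use $B\bm{\uprho} = B\varepsilon$, which follows from the splitting $\bm{\uprho}=\bar{\bm{\uprho}}+\varepsilon$ and \eqref{2.3}, together with the momentum equation rewritten as $c_{s}^{2}\nabla_{i}\bm{\uprho} + Bu^{i} = -\nabla_{i}\psi$; this collapses $2c_{s}c'_{s}(B\bm{\uprho})\nabla_{i}\bm{\uprho} + 2c_{s}^{-1}c'_{s}(B\varepsilon)Bu^{i}$ into $-2c_{s}^{-1}c'_{s}(B\varepsilon)\nabla_{i}\psi$, the last term on the right-hand side of \eqref{2.15}.

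The main obstacle is bookkeeping rather than any deep analytical difficulty: several unrelated-looking first-order expressions must conspire into the clean Lorentzian contraction form, and the irrotationality of $u$ is indispensable for the cancellation of the second-order Laplacian piece --- without it one would be stuck with genuine second derivatives that the wave-operator framework cannot absorb. The secondary subtlety is recognising that the gravitational source appears naturally only after trading $c_{s}^{2}\nabla_{i}\bm{\uprho}$ for $-Bu^{i}-\nabla_{i}\psi$ via the momentum equation at the very end; otherwise the residual terms remain expressed in terms of $\nabla_{i}\bm{\uprho}$ rather than $\nabla_{i}\psi$, obscuring the structure of the equation for later energy estimates.
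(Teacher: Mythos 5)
Your proposal is correct and follows essentially the same route as the paper: expand $\square_{g}u^{i}$ via \eqref{2.11}, compute $BBu^{i}$ from the momentum equation using the commutator $[B,\partial_{i}]$ and irrotationality to cancel the Laplacian, reassemble the first-order terms into the $g^{-1}$-contraction via \eqref{2.4}, and use the momentum equation once more (together with $B\bm{\uprho}=B\varepsilon$) to produce the $-2c_{s}^{-1}c'_{s}(B\varepsilon)\nabla_{i}\psi$ source. All the individual cancellations you describe match the paper's \eqref{2.16}--\eqref{2.19}.
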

We now derive equation for \(\varepsilon\).
\begin{lemma}[\(\textbf{Wave equation for}\) \(\varepsilon\)]
	The compressible Euler-Poisson equations \(\eqref{2.1}-\eqref{2.2}\) imply the following covariant wave equation for the logarithmic density perturbation variable \(\varepsilon\):
	\begin{align}\label{2.20}
	\begin{split}
	\square_{g}\varepsilon=&-3c_{s}^{-1}c'_{s}\left( g^{-1}\right) ^{\alpha\beta}\partial_{\alpha}\bm{\uprho}\partial_{\beta}\varepsilon+2\sum_{1\le a<b\le 3}\left\{\partial_{a}u^{a}\partial_{b}u^{b}-\partial_{a}u^{b}\partial_{b}u^{a}\right\}\\
	&-c_{s}^{2}\delta^{ab}\partial_{a}\partial_{b}\bar{\bm{\uprho}}-2c_{s}^{2}c'_{s}\delta^{ab}(\partial_{a}\bm{\uprho})(\partial_{b}\bar{\bm{\uprho}})-\Delta\psi.
	\end{split}
	\end{align}
	\begin{proof}
		First,using \(\eqref{2.11}\) with \(\phi=\varepsilon\) and equation \(\eqref{2.1}\), we compute that
		\begin{align}\label{2.21}
		\square_{g}\varepsilon=-BB\varepsilon+c_{s}^{2}\delta^{ab}\partial_{a}\partial_{b}\varepsilon+2c_{s}^{-1}c'_{s}(B\varepsilon) (B\varepsilon)+\left( \partial_{a}u^{a}\right) ^{2}-c_{s}^{-1}c'_{s}\left( g^{-1}\right) ^{\alpha\beta}\partial_{\alpha}\bm{\uprho}\partial_{\beta}\varepsilon.
		\end{align}
		Next, we use \(\eqref{2.1}-\eqref{2.2}\) to compute that
		\begin{align}\label{2.22}
		\begin{split}
		BB\varepsilon=&-\partial_{a}\left( Bu^{a}\right) +\left( \partial_{a}u^{b}\right) \partial_{b}u^{a}\\
		=&c_{s}^{2}\delta^{ab}\partial_{a}\partial_{b}\bm{\uprho}+\delta^{ab}\left( \partial_{a}c_{s}^{2}\right) \partial_{b}\bm{\uprho}+\left( \partial_{a}u^{b}\right) \partial_{b}u^{a}+\Delta\psi\\
		=&c_{s}^{2}\delta^{ab}\partial_{a}\partial_{b}\bm{\uprho}+2c_{s}c'_{s}\delta^{ab}\partial_{a}\bm{\uprho}\partial_{b}\bm{\uprho}+\left( \partial_{a}u^{b}\right) \partial_{b}u^{a}+\Delta\psi.
		\end{split}
		\end{align}
		Finally, using \(\eqref{2.22}\) to substitute for the term \(-BB\bm{\uprho}\) on RHS \(\eqref{2.21}\) and using the identities
		\begin{align}\label{2.23}
		\left( \partial_{a}u^{a}\right) ^{2}-\left( \partial_{a}u^{b}\right) \partial_{b}u^{a}=2\sum_{1\le a<b\le 3}\left\{\partial_{a}u^{a}\partial_{b}u^{b}-\partial_{a}u^{b}\partial_{b}u^{a}\right\}
		\end{align}
		and \(B\bm{\uprho}B\varepsilon-c_{s}^{2}\delta^{ab}\partial_{a}\bm{\uprho}\partial_{b}\varepsilon=-\left( g^{-1}\right) ^{\alpha\beta}\partial_{\alpha}\bm{\uprho}\partial_{\beta}\varepsilon\), , we arrive at the desired expression \(\eqref{2.20}\).	
	\end{proof}	
\end{lemma}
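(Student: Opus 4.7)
The plan is to apply equation (2.11) (the general identity for $\square_g$ on scalars) with $\phi = \varepsilon$ and then independently compute $BB\varepsilon$ using the continuity and momentum equations, substituting the result back and rearranging via the acoustical-metric identity to match the claimed form. The single simplifying fact $B\bar{\bm{\uprho}}=0$ from (2.3), combined with the continuity equation (2.1), gives $B\varepsilon = B\bm{\uprho} = -\partial_a u^a$, and this is used repeatedly.

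First, inserting $\phi = \varepsilon$ into (2.11) and using $B\varepsilon = -\partial_a u^a$ turns the $-(\partial_a u^a)B\varepsilon$ term into $+(\partial_a u^a)^2$ and $2c_s^{-1}c'_s(B\varepsilon)B\varepsilon$ into $2c_s^{-1}c'_s(B\varepsilon)^2$. Next, to evaluate $BB\varepsilon$, I would start from $B\varepsilon = -\partial_a u^a$ and apply $B$ once more: the commutator $[B,\partial_a] = -(\partial_a u^c)\partial_c$ gives $BB\varepsilon = -\partial_a(Bu^a) + (\partial_a u^b)\partial_b u^a$, and substituting the momentum equation $Bu^a = -c_s^2\partial_a\bm{\uprho} - \partial_a\psi$ together with $\partial_a c_s^2 = 2c_s c'_s\partial_a\bm{\uprho}$ yields $BB\varepsilon = c_s^2\delta^{ab}\partial_a\partial_b\bm{\uprho} + 2c_s c'_s\delta^{ab}\partial_a\bm{\uprho}\partial_b\bm{\uprho} + (\partial_a u^b)\partial_b u^a + \Delta\psi$.

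Combining these produces a raw expression for $\square_g \varepsilon$ that I would reorganize in three cleanup steps. Step (i) applies the algebraic identity $(\partial_a u^a)^2 - (\partial_a u^b)\partial_b u^a = 2\sum_{1\le a<b\le 3}\{\partial_a u^a\partial_b u^b - \partial_a u^b\partial_b u^a\}$ to produce the antisymmetric-tensor term. Step (ii) writes $\partial_a\partial_b\bm{\uprho} = \partial_a\partial_b\bar{\bm{\uprho}} + \partial_a\partial_b\varepsilon$, so that the $c_s^2\delta^{ab}\partial_a\partial_b\varepsilon$ appearing in (2.11) cancels against part of $-c_s^2\delta^{ab}\partial_a\partial_b\bm{\uprho}$, leaving only the background source $-c_s^2\delta^{ab}\partial_a\partial_b\bar{\bm{\uprho}}$. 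Step (iii) splits $\partial_b\bm{\uprho} = \partial_b\bar{\bm{\uprho}} + \partial_b\varepsilon$ inside $-2c_s c'_s\delta^{ab}\partial_a\bm{\uprho}\partial_b\bm{\uprho}$ and invokes the key identity $c_s^2\delta^{ab}\partial_a\bm{\uprho}\partial_b\varepsilon = (g^{-1})^{\alpha\beta}\partial_\alpha\bm{\uprho}\partial_\beta\varepsilon + (B\varepsilon)^2$ (immediate from the definition of $g^{-1}$ together with $B\bm{\uprho}B\varepsilon = (B\varepsilon)^2$, which uses $B\bar{\bm{\uprho}}=0$) to trade the $-2c_s c'_s\delta^{ab}\partial_a\bm{\uprho}\partial_b\varepsilon$ piece for $-2c_s^{-1}c'_s(g^{-1})^{\alpha\beta}\partial_\alpha\bm{\uprho}\partial_\beta\varepsilon - 2c_s^{-1}c'_s(B\varepsilon)^2$, while producing the remaining $-2c_s c'_s\delta^{ab}\partial_a\bm{\uprho}\partial_b\bar{\bm{\uprho}}$ background term.

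The final accounting is then automatic: the $-2c_s^{-1}c'_s(g^{-1})^{\alpha\beta}\partial_\alpha\bm{\uprho}\partial_\beta\varepsilon$ from step (iii) combines with the $-c_s^{-1}c'_s(g^{-1})^{\alpha\beta}\partial_\alpha\bm{\uprho}\partial_\beta\varepsilon$ already in (2.11) to produce the target coefficient $-3c_s^{-1}c'_s$, and the $+2c_s^{-1}c'_s(B\varepsilon)^2$ from the initial substitution cancels the $-2c_s^{-1}c'_s(B\varepsilon)^2$ from step (iii), so that no stray quadratic $B\varepsilon$ terms survive. The main obstacle is therefore purely bookkeeping rather than conceptual: making sure the three sources of $c_s^{-1}c'_s(g^{-1})$-contractions add up to exactly $-3c_s^{-1}c'_s$ and that the $(B\varepsilon)^2$ contributions from two distinct origins cancel. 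The conceptual ingredient enabling the repackaging is $B\bar{\bm{\uprho}}=0$, which lets the $\partial\varepsilon$ parts of $\partial\bm{\uprho}$ fold into the spacetime acoustical contraction while the $\partial\bar{\bm{\uprho}}$ parts remain as honest background source terms.
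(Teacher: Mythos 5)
Your proposal is correct and follows essentially the same route as the paper: apply \eqref{2.11} with $\phi=\varepsilon$, compute $BB\varepsilon$ from \eqref{2.1}--\eqref{2.2} via the commutator $[B,\partial_a]$, and repackage using $c_s^2\delta^{ab}\partial_a\bm{\uprho}\,\partial_b\varepsilon=(g^{-1})^{\alpha\beta}\partial_\alpha\bm{\uprho}\,\partial_\beta\varepsilon+(B\varepsilon)^2$; your explicit tracking of the cancellation of the two $(B\varepsilon)^2$ contributions and of the splitting $\bm{\uprho}=\bar{\bm{\uprho}}+\varepsilon$ is precisely what the paper leaves implicit in its final step. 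One remark: your (correct) bookkeeping produces the background term with coefficient $-2c_sc'_s$, whereas \eqref{2.20} displays $-2c_s^{2}c'_s$; this is a typo in the displayed statement rather than a gap in your argument, since the paper's own intermediate identity \eqref{2.22} likewise yields $2c_sc'_s\delta^{ab}\partial_a\bm{\uprho}\,\partial_b\bm{\uprho}$.
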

It's worth noting that there is no zeroth order term on RHS \(\eqref{2.20}\) because \(\bar{\bm{\uprho}}\) satisfies the steady equation. We now establish equation for \(B\varepsilon\).
\begin{lemma}[\(\textbf{Wave equation for}\) \(B\varepsilon\)]
	The compressible Euler-Poisson equations \(\eqref{2.1}-\eqref{2.2}\) imply the following covariant wave equation for \(B\varepsilon\):
	\begin{align}\label{2.24}
	\begin{split}
		\square_{g}B\varepsilon=& -(1+3c_{s}^{-1}c'_{s})\left( g^{-1}\right) ^{\alpha\beta}\left( \partial_{\alpha}\bm{\uprho}\right) \left( \partial_{\beta}B\varepsilon\right)-2c_{s}c'_{s}\delta^{ab}\left( \partial_{a}\bm{\uprho}\right) \left( \partial_{b}B\varepsilon\right)+2c_{s}^{2}\left( \partial_{a}\partial_{b}\bm{\uprho}\right) \left( \partial_{a}u^{b}\right)\\
        &-2c_{s}c'_{s}\delta^{ab}\left( B\varepsilon\right) \left( \partial_{a}\partial_{b}\bm{\uprho}\right)+2c_{s}^{2}\left( \partial_{a}\partial_{b}\bm{\uprho}\right) \left( \partial_{a}u^{b}\right)-2\left( c'_{s}c'_{s}+c_{s}c'\!'_{s}\right) \delta^{ab}\left( B\varepsilon\right) \left( \partial_{a}\bm{\uprho}\right) \left( \partial_{b}\bm{\uprho}\right)\\ 
        &+8c_{s}c'_{s}\left( \partial_{a}\bm{\uprho}\right) \left( \partial_{b}\bm{\uprho}\right) \left( \partial_{a}u^{b}\right)-B\Delta\psi +2\left( \partial_{a}\partial_{b}\psi\right) \left( \partial_{a}u^{b}\right)
        +2\left( \partial_{a}u^{b}\right) \left( \partial_{b}u^{c}\right) \left( \partial_{c}u^{a}\right).
	\end{split}
	\end{align}
	\begin{proof}
		First,using \(\eqref{2.11}\) with \(\phi=B\varepsilon\) to deduce
		\begin{align}\label{2.25}
		\square_{g}B\varepsilon=-BBB\varepsilon+c_{s}^{2}\delta^{ab}\partial_{a}\partial_{b}B\varepsilon+2c_{s}^{-1}c'_{s}\left( B\varepsilon\right) \left( BB\varepsilon\right) -\left( \partial_{a}u^{a}\right) BB\varepsilon-c_{s}^{-1}c'_{s}\left( g^{-1}\right) ^{\alpha\beta}\left( \partial_{\alpha}\bm{\uprho}\right) \left( \partial_{\beta}B\varepsilon\right) .
		\end{align}
		Next, commuting equation \(\eqref{2.22}\) with the operator \(B\), we get
		\begin{align}\label{2.26}
		\begin{split}
		BBB\varepsilon=&2c_{s}c'_{s}\delta^{ab}\left( B\varepsilon\right) \left( \partial_{a}\partial_{b}\bm{\uprho}\right) +c_{s}^{2}\delta^{ab}\left( B\partial_{a}\partial_{b}\bm{\uprho}\right) +2\left( c'_{s}c'_{s}+c_{s}c'\!'_{s}\right) \delta^{ab}\left( B\varepsilon\right) \left( \partial_{a}\bm{\uprho}\right) \left( \partial_{b}\bm{\uprho}\right) \\
		&+B\Delta\psi+\left\{4c_{s}c'_{s}\delta^{ab}\left( B\partial_{a}\bm{\uprho}\right) \left( \partial_{b}\bm{\uprho}\right) +2\left( B\partial_{a}u^{b}\right) \left( \partial_{b}u^{a}\right) \right\}.
		\end{split}
		\end{align}
		Next, using the identity
		\begin{align}\label{2.27}
		\partial_{a}B\phi=B\partial_{a}\phi+\left( \partial_{a}u^{c}\right) \left( \partial_{c}\phi\right) 
		\end{align}
		with \(\phi=\bm{\uprho}\) and \(\phi=u^{b}\) respectively, we deduce the terms \(\left\{\cdot\right\}\) in \(\eqref{2.26}\) and obtain
		\begin{align}\label{2.28}
		\begin{split}
		\left\{\cdot\right\}=&4c_{s}c'_{s}\delta^{ab}\left( \partial_{a}B\varepsilon\right) \left( \partial_{b}\bm{\uprho}\right) -4c_{s}c'_{s}\delta^{ab}\left( \partial_{a}u^{c}\right) \left( \partial_{c}\bm{\uprho}\right) \left( \partial_{b}\bm{\uprho}\right)+2\left( \partial_{a}Bu^{b}\right) \left( \partial_{b}u^{a}\right) -2\left( \partial_{a}u^{b}\right) \left( \partial_{b}u^{c}\right) \left( \partial_{c}u^{a}\right)
		\end{split}
		\end{align}
		Next, using the identity
		\begin{align}\label{2.29}
		\partial_{a}\partial_{b}B\varepsilon=B\partial_{a}\partial_{b}\bm{\uprho}+\left( \partial_{a}\partial_{b}u^{c}\right) \left( \partial_{c}\bm{\uprho}\right)+\left( \partial_{a}u^{c}\right) \left( \partial_{b}\partial_{c}\bm{\uprho}\right)+\left( \partial_{b}u^{c}\right) \left( \partial_{a}\partial_{c}\bm{\uprho}\right),
		\end{align}
		we get
		\begin{align}\label{2.30}
		\begin{split}
		c_{s}^{2}\delta^{ab}\partial_{a}\partial_{b}B\varepsilon=&c_{s}^{2}\delta^{ab}\left( B\partial_{a}\partial_{b}\bm{\uprho}\right) +c_{s}^{2}\delta^{ab}\left( \partial_{a}\partial_{b}u^{c}\right) \left( \partial_{c}\bm{\uprho}\right) +2c_{s}^{2}\delta^{ab}\left( \partial_{a}u^{c}\right) \left( \partial_{b}\partial_{c}\bm{\uprho}\right) \\
		=&c_{s}^{2}\delta^{ab}\left( B\partial_{a}\partial_{b}\bm{\uprho}\right) +c_{s}^{2}\left( \Delta u^{c}\right) \left( \partial_{c}\bm{\uprho}\right) +2c_{s}^{2}\left( \partial_{a}u^{b}\right) \left( \partial_{a}\partial_{b}\bm{\uprho}\right).
		\end{split}
		\end{align}
		Next, we use \(\eqref{2.25},\eqref{2.26},\eqref{2.28}\) and \(\eqref{2.30}\) to derive the equation
		\begin{align}\label{2.31}
		\begin{split}
		\square_{g}B\varepsilon=& -c_{s}^{-1}c'_{s}\left( g^{-1}\right) ^{\alpha\beta}\left( \partial_{\alpha}\bm{\uprho}\right) \left( \partial_{\beta}B\varepsilon\right)-4c_{s}c'_{s}\delta^{ab}\left( \partial_{a}\bm{\uprho}\right) \left( \partial_{b}B\varepsilon\right)+c_{s}^{2}\left( \triangle u^{c}\right) \left( \partial_{c}\bm{\uprho}\right)\\
		&-2c_{s}c'_{s}\delta^{ab}\left( B\varepsilon\right) \left( \partial_{a}\partial_{b}\bm{\uprho}\right)+2c_{s}^{2}\left( \partial_{a}\partial_{b}\bm{\uprho}\right) \left( \partial_{a}u^{b}\right)-2\left( c'_{s}c'_{s}+c_{s}c'\!'_{s}\right) \delta^{ab}\left( B\varepsilon\right) \left( \partial_{a}\bm{\uprho}\right) \left( \partial_{b}\bm{\uprho}\right)\\ 
		&+4c_{s}c'_{s}\left( \partial_{a}\bm{\uprho}\right) \left( \partial_{b}\bm{\uprho}\right) \left( \partial_{a}u^{b}\right)-B\Delta\psi \\
		&-2\left( \partial_{a}Bu^{b}\right) \left( \partial_{b}u^{a}\right)+2\left( \partial_{a}u^{b}\right) \left( \partial_{b}u^{c}\right) \left( \partial_{c}u^{a}\right)+2c_{s}^{-1}c'_{s}\left( B\varepsilon\right) \left( BB\varepsilon\right) +\left( B\varepsilon\right) \left( BB\varepsilon\right) .
		\end{split}
		\end{align}
		Finally, noticing the identity 
		\begin{align}\label{2.32}
		\begin{split}
		-2\left( \partial_{a}Bu^{b}\right) \left( \partial_{b}u^{a}\right)=
		2c_{s}^{2}\left( \partial_{a}\partial_{b}\bm{\uprho}\right) \left( \partial_{a}u^{b}\right) +4c_{s}c'_{s}\left( \partial_{a}\bm{\uprho}\right) \left( \partial_{b}\bm{\uprho}\right) \left( \partial_{a}u^{b}\right)+2\left( \partial_{a}\partial_{b}\psi\right) \left( \partial_{a}u^{b}\right)  ,
		\end{split}
		\end{align}
		according to \(\textrm{curl}(\textrm{curl}u)=\nabla\left( \nabla\cdot u\right) -\Delta u=0\), and \(\left( B\bm{\uprho}\right) \left( BB\varepsilon\right) -c_{s}^{2}\delta^{ab}\left( \partial_{a}\bm{\uprho}\right) \left( \partial_{b}B\varepsilon\right) =-\left( g^{-1}\right) ^{\alpha\beta}\partial_{\alpha}\bm{\uprho}\partial_{\beta}B\varepsilon\), we arrive at the desired expression \(\eqref{2.24}\).
	\end{proof}
\end{lemma}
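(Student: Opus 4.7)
The plan is to apply the covariant wave formula \eqref{2.11} with $\phi = B\varepsilon$, and then use the previously derived identity \eqref{2.22} for $BB\varepsilon$ (applied once more with $B$) together with the commutator $\partial_{a}B\phi = B\partial_{a}\phi + (\partial_{a}u^{c})\partial_{c}\phi$ to rewrite every third-order derivative of $\varepsilon$ in terms of $B\partial_{a}\partial_{b}\bm{\uprho}$ plus lower-order corrections. Initially \eqref{2.11} gives
\begin{align*}
\square_{g}B\varepsilon &= -BBB\varepsilon + c_{s}^{2}\delta^{ab}\partial_{a}\partial_{b}(B\varepsilon) + 2c_{s}^{-1}c'_{s}(B\varepsilon)(BB\varepsilon)\\
&\quad - (\partial_{a}u^{a})(BB\varepsilon) - c_{s}^{-1}c'_{s}(g^{-1})^{\alpha\beta}(\partial_{\alpha}\bm{\uprho})(\partial_{\beta}B\varepsilon),
\end{align*}
where $\partial_{a}u^{a} = -B\varepsilon$ by \eqref{2.1} together with $B\bar{\bm{\uprho}} = 0$.

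Next I would expand $BBB\varepsilon$ by applying $B$ to \eqref{2.22}, and expand $c_{s}^{2}\delta^{ab}\partial_{a}\partial_{b}(B\varepsilon)$ by commuting the two spatial derivatives past $B$. The key structural point is that both expansions produce the same top-order piece $c_{s}^{2}\delta^{ab}B\partial_{a}\partial_{b}\bm{\uprho}$ with opposite signs, so it cancels. What remains is a collection of commutator residues: terms like $(\partial_{a}u^{c})(\partial_{c}\bm{\uprho})$, $c_{s}^{2}(\partial_{a}u^{b})(\partial_{a}\partial_{b}\bm{\uprho})$, the Laplacian piece $c_{s}^{2}(\Delta u^{c})(\partial_{c}\bm{\uprho})$, the gravitational source $B\Delta\psi$, together with a mixed term $-2(\partial_{a}Bu^{b})(\partial_{b}u^{a})$. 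To handle the last of these I differentiate the momentum equation \eqref{2.2} spatially, which converts $\partial_{a}Bu^{b}$ into $c_{s}^{2}(\partial_{a}\partial_{b}\bm{\uprho}) + 2c_{s}c'_{s}(\partial_{a}\bm{\uprho})(\partial_{b}\bm{\uprho}) + \partial_{a}\partial_{b}\psi$; the assumption $\textrm{curl}\,u = 0$ (giving $\Delta u^{c} = \partial_{c}\partial_{a}u^{a}$) eliminates antisymmetric corrections and lets me absorb the stray $c_{s}^{2}(\Delta u^{c})(\partial_{c}\bm{\uprho})$ into the symmetric Hessian form. Finally, the top-order transport coefficient in front of $\partial B\varepsilon$ is reassembled via
\begin{align*}
(B\bm{\uprho})(BB\varepsilon) - c_{s}^{2}\delta^{ab}(\partial_{a}\bm{\uprho})(\partial_{b}B\varepsilon) = -(g^{-1})^{\alpha\beta}(\partial_{\alpha}\bm{\uprho})(\partial_{\beta}B\varepsilon),
\end{align*}
and the identity $B\bm{\uprho} = B\varepsilon$ together with \eqref{2.22} produces precisely the coefficient $-(1 + 3c_{s}^{-1}c'_{s})$ appearing in \eqref{2.24}.

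The main obstacle I anticipate is not any single conceptual step but rather bookkeeping: three separate commutator operations (applying $B$ to \eqref{2.22}, commuting $\partial_{a}$ past $B$ inside $\partial_{a}\partial_{b}B\varepsilon$, and differentiating \eqref{2.2} spatially) each generate cubic corrections in $\partial u$ and $\partial \bm{\uprho}$, and one must track a proliferation of coefficients involving $c'_{s}$, $c_{s}c'_{s}$, and $c_{s}c''_{s}$ without losing signs. A secondary concern is to avoid double-counting the gravitational contribution: the piece $-B\Delta\psi$ arises from the $B$-derivative of \eqref{2.22}, while the Hessian term $2(\partial_{a}\partial_{b}\psi)(\partial_{a}u^{b})$ arises from the spatial derivative of \eqref{2.2}, and both must survive to the final right-hand side of \eqref{2.24}. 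Careful distinction between $B\bm{\uprho}$ and $B\varepsilon$ (equal but playing different structural roles) is essential throughout.
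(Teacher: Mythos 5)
Your proposal follows essentially the same route as the paper's proof: apply \eqref{2.11} with $\phi=B\varepsilon$, expand $BBB\varepsilon$ by commuting $B$ through \eqref{2.22}, expand $c_{s}^{2}\delta^{ab}\partial_{a}\partial_{b}B\varepsilon$ via the commutator identities so that the top-order term $c_{s}^{2}\delta^{ab}B\partial_{a}\partial_{b}\bm{\uprho}$ cancels, convert $-2(\partial_{a}Bu^{b})(\partial_{b}u^{a})$ by spatially differentiating the momentum equation (the paper's \eqref{2.32}), use $\textrm{curl}\,u=0$ for the $\Delta u^{c}$ term, and reassemble the transport coefficient through $(B\bm{\uprho})(BB\varepsilon)-c_{s}^{2}\delta^{ab}(\partial_{a}\bm{\uprho})(\partial_{b}B\varepsilon)=-(g^{-1})^{\alpha\beta}(\partial_{\alpha}\bm{\uprho})(\partial_{\beta}B\varepsilon)$. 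The approach is correct and matches the paper step for step, including the correctly anticipated bookkeeping pitfalls.
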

In conclusion, we reformulate the compressible Euler-Poisson equation in the following quasilinear form:
\begin{equation}\label{2.33}
\left\{
\begin{aligned}
&\left(B^{2}+a\nabla_{n}^{\left( g\right) }\right)u^{i}=-c_{s}^{2}\nabla_{i} B\bm{\uprho}-B\nabla_{i}\psi\quad \textrm{on}\ \partial\calB(t),\\
&\square_{g}u^{i}=-\left(1+c_{s}^{-1}c'_{s}\right) \left( g^{-1}\right) ^{\alpha\beta}\partial_{\alpha}\bm{\uprho}\partial_{\beta}u^{i}+B\nabla_{i}\psi-2c_{s}^{-1}c'_{s}(B\bm{\uprho})\nabla_{i}\psi\quad \textrm{in}\ \calB(t),
\end{aligned}
\right.
\end{equation}
and
\begin{equation}\label{2.34}
\left\{
\begin{aligned}
\square_{g}\varepsilon=&-3c_{s}^{-1}c'_{s}\left( g^{-1}\right) ^{\alpha\beta}\partial_{\alpha}\bm{\uprho}\partial_{\beta}\varepsilon+\mathscr{Q}\quad\textrm{in}\ \calB(t),\quad\quad\varepsilon,\ B\varepsilon=0\ \ \textrm{on}\ \ \partial\calB(t), \\
\square_{g}B\varepsilon=&-(1+3c_{s}^{-1}c'_{s})\left( g^{-1}\right) ^{\alpha\beta}\left( \partial_{\alpha}\bm{\uprho}\right) \left( \partial_{\beta}B\varepsilon\right)-2c_{s}c'_{s}\delta^{ab}\left( \partial_{a}\bm{\uprho}\right) \left( \partial_{b}B\varepsilon\right)+\mathscr{P}\quad \textrm{in}\ \calB(t),
\end{aligned}
\right.
\end{equation}
where \(a\), \(\mathscr{Q}\) and \(\mathscr{P}\) are defined by
\begin{align}\label{lower order terms}
	\begin{split}
a:=&\sqrt{\partial_{\alpha}\bm{\uprho}\partial^{\alpha}\bm{\uprho}},\\
\mathscr{Q}:=&-c_{s}^{2}\delta^{ab}\partial_{a}\partial_{b}\bar{\bm{\uprho}}-2c_{s}^{2}c'_{s}\delta^{ab}(\partial_{a}\bm{\uprho})(\partial_{b}\bar{\bm{\uprho}})-\Delta\psi+2\sum_{1\le a<b\le 3}\left\{\partial_{a}u^{a}\partial_{b}u^{b}-\partial_{a}u^{b}\partial_{b}u^{a}\right\}\\
\mathscr{P}:=&-2c_{s}c'_{s}\delta^{ab}\left( B\varepsilon\right) \left( \partial_{a}\partial_{b}\bm{\uprho}\right)+2c_{s}^{2}\left( \partial_{a}\partial_{b}\bm{\uprho}\right) \left( \partial_{a}u^{b}\right)-2\left( c'_{s}c'_{s}+c_{s}c'\!'_{s}\right) \delta^{ab}\left( B\varepsilon\right) \left( \partial_{a}\bm{\uprho}\right) \left( \partial_{b}\bm{\uprho}\right)-B\Delta\psi\\ 
&+8c_{s}c'_{s}\left( \partial_{a}\bm{\uprho}\right) \left( \partial_{b}\bm{\uprho}\right) \left( \partial_{a}u^{b}\right) +2\left( \partial_{a}\partial_{b}\psi\right) \left( \partial_{a}u^{b}\right)
+2c_{s}^{2}\left( \partial_{a}\partial_{b}\bm{\uprho}\right) \left( \partial_{a}u^{b}\right) +2\left( \partial_{a}u^{b}\right) \left( \partial_{b}u^{c}\right) \left( \partial_{c}u^{a}\right).
\end{split}
\end{align}
We write the right hand side of equation \(\eqref{2.33}\) and \(\eqref{2.34}\) as the sum of the main linear terms and remainder, which \(\mathscr{Q}\) and \(\mathscr{P}\) include the lower order linear and nonlinear terms. In the next section we will discuss the necessary analytic tools to resolve above quasilinear system.

\section{Energy inequality and higher order equations}\label{3}
In this section we first consider the following model
\begin{equation}\label{3.1}
\left\{
\begin{aligned}
&\left(B^{2}+an\right)\phi=f\\
&\square_{g}\phi=g
\end{aligned}
\right.
\end{equation}
here we use the notation \(n\phi\) for \(\nabla_{n}^{\left( g\right) }\phi\) and \(a\) is as in \(\eqref{2.5}\). In subsection \(\ref{222}\) we use \(\nabla\) for \(\nabla^{\left( g\right) }\).

To obtain an energy inequality for the wave operator \(\square_{g}\), we start as usual by choosing a vector field \(X\) (the multiplier) and writing
\begin{align}\label{3.2}
\left( \square_{g}\phi\right) \left( X\phi\right) =\div P +q.
\end{align}
Here \(P\) is an appropriate field whose coefficients are quadratic forms in the components of \(\nabla\phi\), and \(q\) is a quadratic form in these components with
variable coefficients. Then by integrating \(\left( \square_{g}\phi\right) \left( X\phi\right) \) in some spacetime domain \(\mathcal{D}\), and using the Stokes formula, we can obtain boundary terms
\begin{align*}
\int_{\partial\mathcal{D}}\left\langle P,N\right\rangle dv,
\end{align*}
where \(N\) is  unit outside normal, which yield the energy of \(\phi\). We shall applying the above energy identity to our setting.
\subsection{Energy inequality}\label{222}
The energy-momentum tensor \(Q\) is a symmetric 2-tensor defined by 
\begin{align}\label{Q}
Q\left( X,Y\right) =\left( X\phi\right) \left( Y\phi\right) -\frac{1}{2}\left\langle X,Y \right\rangle |\nabla\phi|^{2},\quad Q_{\alpha\beta}=\left( \partial_{\alpha}\phi\right)\left( \partial_{\beta}\phi\right)-\frac{1}{2}g_{\alpha\beta}|\nabla\phi|^{2},
\end{align}
where \(|\nabla\phi|^{2}=\partial_{\alpha}\phi\partial^{\alpha}\phi\), and the deformation tensor of a given vector field \(X\) is the symmetric
2-tensor \(^{\left( X\right)}\pi \) defined by
\begin{align}
^{\left( X\right)}\pi\left( Y,Z\right) =\left\langle D_{Y}X,Z\right\rangle +\left\langle D_{Z}X,Y\right\rangle ,\quad ^{\left( X\right)}\pi_{\alpha\beta}=D_{\alpha}X_{\beta}+D_{\beta}X_{\alpha}.
\end{align}
Next, we introduce a key formula.
\begin{lemma}
	Let \(\phi\) be a given \(C^{2}\) function and \(Q\) be the associated energy–momentum tensor. Let \(X\) be a vector field, and set \(P_{\alpha}=Q_{\alpha\beta}X^{\beta}=\left( X\phi\right) \left( \partial_{\alpha}\phi\right) -\frac{1}{2}X_{\alpha}|\nabla\phi|^{2}\). Then
	\begin{align}\label{key formula}
	\div P =\left( \square_{g}\phi\right)\left( X\phi\right) +\frac{1}{2}Q^{\alpha\beta\left( X\right) }\pi_{\alpha\beta}.
	\end{align}
\end{lemma}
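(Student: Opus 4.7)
The plan is to expand the divergence of $P$ via the Leibniz rule and then identify the two resulting pieces with the right-hand side of \eqref{key formula}. Writing $\nabla$ for $\nabla^{(g)}$ and using metric compatibility,
\[
\nabla^{\alpha}P_{\alpha} \;=\; \bigl(\nabla^{\alpha}Q_{\alpha\beta}\bigr)X^{\beta} \;+\; Q_{\alpha\beta}\,\nabla^{\alpha}X^{\beta},
\]
so the task reduces to showing $\nabla^{\alpha}Q_{\alpha\beta}=(\square_{g}\phi)(\partial_{\beta}\phi)$ for the first summand and rewriting the second one via the symmetry of $Q$.

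For the divergence of $Q$, I differentiate the definition \eqref{Q} and invoke $\nabla g=0$ to pull the metric through:
\[
\nabla^{\alpha}Q_{\alpha\beta} \;=\; (\square_{g}\phi)(\partial_{\beta}\phi) \;+\; (\partial^{\alpha}\phi)(\nabla_{\alpha}\partial_{\beta}\phi) \;-\; \tfrac{1}{2}\nabla_{\beta}|\nabla\phi|^{2}.
\]
Because $\phi$ is a scalar, the Hessian is symmetric, $\nabla_{\alpha}\partial_{\beta}\phi=\nabla_{\beta}\partial_{\alpha}\phi$, so
\[
(\partial^{\alpha}\phi)(\nabla_{\alpha}\partial_{\beta}\phi) \;=\; \tfrac{1}{2}\nabla_{\beta}\bigl(\partial^{\alpha}\phi\,\partial_{\alpha}\phi\bigr) \;=\; \tfrac{1}{2}\nabla_{\beta}|\nabla\phi|^{2},
\]
and this cancels the last term. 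Hence $\nabla^{\alpha}Q_{\alpha\beta}=(\square_{g}\phi)(\partial_{\beta}\phi)$, and contracting with $X^{\beta}$ produces $(\square_{g}\phi)(X\phi)$.

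For the second summand, the symmetry $Q_{\alpha\beta}=Q_{\beta\alpha}$ is manifest from \eqref{Q}, so
\[
Q_{\alpha\beta}\,\nabla^{\alpha}X^{\beta} \;=\; \tfrac{1}{2}Q_{\alpha\beta}\bigl(\nabla^{\alpha}X^{\beta}+\nabla^{\beta}X^{\alpha}\bigr) \;=\; \tfrac{1}{2}Q^{\alpha\beta}\bigl(\nabla_{\alpha}X_{\beta}+\nabla_{\beta}X_{\alpha}\bigr) \;=\; \tfrac{1}{2}Q^{\alpha\beta}\,{}^{(X)}\pi_{\alpha\beta}
\]
after the usual raising and lowering of dummy indices. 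Adding the two contributions yields \eqref{key formula}. I do not anticipate any real obstacle: this is the classical energy-momentum divergence identity. The only care needed is in the cancellation step, which rests on metric compatibility and the symmetry of the Hessian of a scalar.
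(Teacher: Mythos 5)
Your proof is correct and is the standard argument: the Leibniz rule splits $\nabla^{\alpha}P_{\alpha}$ into $(\nabla^{\alpha}Q_{\alpha\beta})X^{\beta}+Q_{\alpha\beta}\nabla^{\alpha}X^{\beta}$, the first piece reduces to $(\square_{g}\phi)(X\phi)$ because the two remaining terms cancel by metric compatibility and the symmetry of the Hessian of a scalar, and the second piece is symmetrized into $\tfrac{1}{2}Q^{\alpha\beta}\,{}^{(X)}\pi_{\alpha\beta}$ using $Q_{\alpha\beta}=Q_{\beta\alpha}$. The paper states this lemma without proof (it is the classical divergence identity for the energy--momentum tensor), so there is no internal argument to compare against; your computation correctly supplies the omitted proof and is consistent with how the paper then uses \eqref{pi} to re-express $\nabla^{\alpha}X^{\beta}+\nabla^{\beta}X^{\alpha}$ in coordinate form in \eqref{divP}.
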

To compute \(^{\left( X\right)}\pi\), we utilize the following formula
\begin{align}\label{pi}
^{\left( X\right)}\pi^{\alpha\beta}=\partial^{\alpha}\left( X^{\beta}\right) +\partial^{\beta}\left( X^{\alpha}\right) -X\left( g^{\alpha\beta}\right) .
\end{align}

Now, using \(\eqref{Q}\), \(\eqref{key formula}\) and \(\eqref{pi}\), we compute that
\begin{align}\label{divP}
\begin{split}
\div P =&\left( \square_{g}\phi\right)\left( X\phi\right) +\frac{1}{2}\left[ \left( \partial_{\alpha}\phi\right) \left( \partial_{\beta}\phi\right) -\frac{1}{2}g_{\alpha\beta}|\nabla\phi|^{2}\right]  \left[ \partial^{\alpha}\left( X^{\beta}\right) +\partial^{\beta}\left( X^{\alpha}\right) -X\left( g^{\alpha\beta}\right)\right] \\
=&\left( \square_{g}\phi\right)\left( X\phi\right)+\partial^{\alpha}\left( X^{\beta}\right) \left( \partial_{\alpha}\phi\right) \left( \partial_{\beta}\phi\right) -\frac{1}{2}X\left( g^{\alpha\beta}\right) \left( \partial_{\alpha}\phi\right) \left( \partial_{\beta}\phi\right) -\frac{1}{2}\left( \partial_{\alpha}X^{\alpha}\right) |\nabla\phi|^{2}+\frac{1}{4}g_{\alpha\beta}X\left( g^{\alpha\beta}\right) |\nabla\phi|^{2}.
\end{split}
\end{align}
Noticing the identity
\begin{align}\label{Xnabla2}
|g|^{-\frac{1}{2}}X\left( |g|^{\frac{1}{2}} \right)=-\frac{1}{2}g_{\alpha\beta}X\left( g^{\alpha\beta}\right),
\end{align}
we arrive at the following identity
\begin{align}\label{key identity}
\left( \square_{g}\phi\right)\left( X\phi\right)=\div P+\frac{1}{2}\left( \partial_{\alpha}X^{\alpha}\right) |\nabla\phi|^{2}+\frac{1}{2}X\left(g^{\alpha\beta} \right)\left(\partial_{\alpha}\phi \right)\left(\partial_{\beta}\phi \right) -\partial^{\alpha}\left(X^{\beta} \right)\left(\partial_{\alpha}\phi \right)\left(\partial_{\beta}\phi \right)+\frac{1}{2}|g|^{-\frac{1}{2}}X\left( |g|^{\frac{1}{2}} \right) |\nabla\phi|^{2} ,
\end{align}
where \(|g|=|\det g|\). Then we establish  main energy identity for \(\eqref{3.1}\).
\begin{lemma}[\(\textbf{Energy identity}\)]\label{3.2.}
	Suppose \(\phi\) satisfies \(\eqref{3.1}\), then the following energy identity holds:
	\begin{align}\label{energy}
	\begin{split}
	&\int_{\calB(T)}\left( \left( B\phi\right) ^{2}+\frac{1}{2}|\nabla\phi|^{2}\right) dx+\int_{\partial\calB(T)}\frac{1}{2a}\left( B\phi\right) ^{2}dS\\
	&=\int_{\calB(0)}\left( \left( B\phi\right) ^{2}+\frac{1}{2}|\nabla\phi|^{2}\right) dx+\int_{\partial\calB(0)}\frac{1}{2a}\left( B\phi\right) ^{2}dS\\
	&+\int_{0}^{T}\int_{\partial\calB(t)}\frac{1}{a}f\left( B\phi\right) dSdt-\int_{0}^{T}\int_{\calB(t)}g\left( B\phi\right) dxdt-\int_{0}^{T}\int_{\partial\calB(t)}\frac{1}{2a^{2}}\left( Ba\right) \left( B\phi\right) ^{2}dSdt\\
	&+\int_{0}^{T}\int_{\partial\calB(t)}\frac{\div\mkern-17.5mu\slash\ \ B}{2a}\left( B\phi\right) ^{2}dSdt+\int_{0}^{T}\int_{\calB(t)}\frac{1}{2}B\left(g^{\alpha\beta} \right)\left(\partial_{\alpha}\phi \right)\left(\partial_{\beta}\phi \right) dxdt\\
	&-\int_{0}^{T}\int_{\calB(t)}\frac{1}{2}\left( B\bm{\uprho}\right) |\nabla\phi|^{2}dxdt-\int_{0}^{T}\int_{\calB(t)}\partial^{\alpha}\left(B^{\beta} \right)\left(\partial_{\alpha}\phi \right)\left(\partial_{\beta}\phi \right) dxdt+\int_{0}^{T}\int_{\calB(t)}\frac{1}{2}|g|^{-\frac{1}{2}}B\left( |g|^{\frac{1}{2}} \right) |\nabla\phi|^{2}dxdt,
	\end{split}
	\end{align}
	where \(\div\mkern-17.5mu\slash\  \) denotes the divergence operator on \(\partial\calB\).
\end{lemma}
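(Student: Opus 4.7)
The natural multiplier for the wave equation $\square_{g}\phi = g$ is the vectorfield $B$ itself: since $B$ is future-directed, $g$-timelike, of unit length, and orthogonal to the spatial slices $\calB_{t}$, the contracted energy-momentum tensor $Q(B,B) = (B\phi)^{2} + \tfrac{1}{2}|\nabla\phi|^{2}$ serves as a positive energy density. My plan is therefore to substitute $X = B$ into the master identity \eqref{key identity}, integrate the result over the spacetime slab $\mathcal{D}_{T} := \{(t,x) : 0 \le t \le T,\ x \in \calB(t)\}$, apply the divergence theorem, and finally handle the lateral moving boundary via the boundary equation in \eqref{3.1}.

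After the substitution $X = B$, I use $\square_{g}\phi = g$ from \eqref{3.1} together with $\partial_{\alpha}B^{\alpha} = \partial_{a}u^{a} = -B\bm{\uprho}$ from \eqref{2.1} to rearrange \eqref{key identity}. Integrating over $\mathcal{D}_{T}$, the divergence theorem delivers fluxes on the initial and final slices $\calB(0)$ and $\calB(T)$; a direct computation using $B_{\alpha} = (-1,0,0,0)$ gives $P^{0} = -Q(B,B)$, producing the volume energy pieces $\pm\int Q(B,B)\,dx$ in \eqref{energy}. The bulk source integral $-\int g(B\phi)\,dxdt$ and the four bulk correction integrals in \eqref{energy} (those involving $B\bm{\uprho}$, $B(g^{\alpha\beta})$, $\partial^{\alpha}B^{\beta}$, and $|g|^{-1/2}B(|g|^{1/2})$) appear immediately as rearranged error terms of \eqref{key identity}.

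The main obstacle is the lateral boundary flux $F_{\mathrm{lat}}$ on $\mathcal{M}_{T} := \bigcup_{t\in[0,T]}\{t\}\times\partial\calB(t)$. On $\mathcal{M}_{T}$ the conditions $\bm{\uprho} = 0$ and $B\bm{\uprho} = 0$ force $B$ to be tangent to $\mathcal{M}_{T}$, while $n = -\nabla^{(g)}\bm{\uprho}/a$ is $g$-unit outward normal, and $\langle B,n\rangle_{g} = 0$; consequently $Q(B,n) = (B\phi)(n\phi)$. The boundary equation $(B^{2} + an)\phi = f$ then allows the substitution $n\phi = (f - B^{2}\phi)/a$, so that $Q(B,n) = f(B\phi)/a - \tfrac{1}{2a}B\bigl((B\phi)^{2}\bigr)$. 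The first piece contributes the source integral $\int_{0}^{T}\!\!\int_{\partial\calB(t)}f(B\phi)/a\,dSdt$ directly. The second piece is integrated by parts along the tangential vectorfield $B$ using the identity $\int_{0}^{T}\!\!\int_{\partial\calB(t)}B(F)\,dSdt = \int_{\partial\calB(T)}F\,dS - \int_{\partial\calB(0)}F\,dS - \int_{0}^{T}\!\!\int_{\partial\calB(t)}F\,\div\mkern-17.5mu\slash\  B\,dSdt$, where $\div\mkern-17.5mu\slash\  B$ encodes the failure of $B$-invariance of the induced surface element $dS$ on the time-varying surface $\partial\calB(t)$. Applying this with $F = (B\phi)^{2}/(2a)$, together with $B(1/(2a)) = -Ba/(2a^{2})$, produces exactly the endpoint boundary-of-boundary terms $\pm\int_{\partial\calB(\cdot)}(B\phi)^{2}/(2a)\,dS$, the correction involving $Ba/a^{2}$, and the correction involving $\div\mkern-17.5mu\slash\  B/a$ in \eqref{energy}.

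Collecting everything and rearranging to place the time-$T$ energy on the left then yields \eqref{energy}. The subtleties I expect to verify most carefully are the $g$-orthogonality $\langle B,n\rangle_{g} = 0$ on $\mathcal{M}_{T}$ (which hinges on $B$ being tangent to the level set $\{\bm{\uprho}=0\}$), the precise identification of $\div\mkern-17.5mu\slash\  B$ as the $B$-logarithmic derivative of the area element on the moving boundary $\partial\calB(t)$, and the reconciliation of Cartesian volume measures in the bulk with the underlying Lorentzian structure — the latter being exactly the role played by the $|g|^{-1/2}B(|g|^{1/2})|\nabla\phi|^{2}$ error term.
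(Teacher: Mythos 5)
Your proposal is correct and follows essentially the same route as the paper: the multiplier is $X=B$ in the master identity \eqref{key identity} integrated over the spacetime slab, and the lateral flux $(n\phi)(B\phi)$ is eliminated using the boundary equation weighted by $\tfrac{1}{a}(B\phi)$ together with the transport identity for the moving surface measure. The only difference is organizational — the paper derives the boundary identity \eqref{energy1} separately and adds it to the bulk identity \eqref{energy2}, whereas you substitute $n\phi=(f-B^{2}\phi)/a$ directly into the lateral flux — and your sign and normalization checks ($B_{\alpha}=(-1,0,0,0)$, $\langle B,n\rangle_{g}=0$, $Q(B,B)=(B\phi)^{2}+\tfrac{1}{2}|\nabla\phi|^{2}$) all match the paper's conventions.
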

\begin{proof}
	Multiplying the first equation in \(\eqref{3.1}\) by \(\frac{1}{a}\left( B\phi\right) \) we get
	\begin{align}\label{1/a B phi}
	\frac{1}{2}B\left[ \frac{1}{a}\left( B\phi\right) ^{2}\right] +\left( n\phi\right) \left( B\phi\right) =\frac{1}{a}f\left( B\phi\right) -\frac{1}{2a^{2}}\left( Ba\right) \left( B\phi\right) ^{2}.
	\end{align}
	we have
	\begin{align}\label{div}
	\int_{0}^{T}\int_{\partial\calB(t)}\left( \frac{1}{2}B\left( \frac{1}{a}\left( B\phi\right) ^{2}\right) +\frac{\div\mkern-17.5mu\slash\ \ B}{2a}\left( B\phi\right) ^{2}\right) dSdt=\int_{\partial\calB(T)}\frac{1}{2a}\left( B\phi\right) ^{2}dS-\int_{\partial\calB(0)}\frac{1}{2a}\left( B\phi\right) ^{2}dS.
	\end{align}
	Integrating \(\eqref{1/a B phi}\) over \(\partial\calB=\cup_{t\in[0,T]}\partial\calB(t)\) and using \(\eqref{div}\) we get
	\begin{align}\label{energy1}
	\begin{split}
	&\int_{\partial\calB(T)}\frac{1}{2a}\left( B\phi\right) ^{2}dS+\int_{0}^{T}\int_{\partial\calB(t)}\left( n\phi\right) \left( B\phi\right)dSdt\\
	&=\int_{\partial\calB(0)}\frac{1}{2a}\left( B\phi\right) ^{2}dS+\int_{0}^{T}\int_{\partial\calB(t)}\frac{1}{a}f\left( B\phi\right) dSdt\\
	&-\int_{0}^{T}\int_{\partial\calB(t)}\frac{1}{2a^{2}}\left( Ba\right) \left( B\phi\right) ^{2}dSdt+\int_{0}^{T}\int_{\partial\calB(t)}\frac{\div\mkern-17.5mu\slash\ \ B}{2a}\left( B\phi\right) ^{2}dSdt.
	\end{split}
	\end{align}
	To treat the second term on the left, we integrate \(\eqref{key identity}\) with \(X=B\) over \(\cup_{t\in[0,T]}\calB(t)\). Using the fact that \(B\) is tangent to \(\partial\calB\) and Stokes formula, we get
	\begin{align}\label{energy2}
	\begin{split}
	&\int_{\calB(T)}\left( \left( B\phi\right) ^{2}+\frac{1}{2}|\nabla\phi|^{2}\right) dx-\int_{0}^{T}\int_{\partial\calB(t)}\left( n\phi\right) \left( B\phi\right)dSdt\\
	&=\int_{\calB(0)}\left( \left( B\phi\right) ^{2}+\frac{1}{2}|\nabla\phi|^{2}\right) dx-\int_{0}^{T}\int_{\calB(t)}g\left( B\phi\right) dxdt+\int_{0}^{T}\int_{\calB(t)}\frac{1}{2}B\left(g^{\alpha\beta} \right)\left(\partial_{\alpha}\phi \right)\left(\partial_{\beta}\phi \right) dxdt\\
	&-\int_{0}^{T}\int_{\calB(t)}\frac{1}{2}\left( B\bm{\uprho}\right) |\nabla\phi|^{2}dxdt-\int_{0}^{T}\int_{\calB(t)}\partial^{\alpha}\left(B^{\beta} \right)\left(\partial_{\alpha}\phi \right)\left(\partial_{\beta}\phi \right) dxdt+\int_{0}^{T}\int_{\calB(t)}\frac{1}{2}|g|^{-\frac{1}{2}}B\left( |g|^{\frac{1}{2}} \right) |\nabla\phi|^{2}dxdt.
	\end{split}
	\end{align}
	The lemma follows by adding \(\eqref{energy2}\) to \(\eqref{energy1}\).
\end{proof}
Because the vectorfield \(B\) is timelike and future-directed, according to the positivity of energy-momentum tensor, the first term on the left of \(\eqref{energy}\) satisfies
\begin{align*}
\left( B\phi\right) ^{2}+\frac{1}{2}|\nabla\phi|^{2}\gtrsim\left| \partial_{t,x}\phi\right|^{2}.
\end{align*}
For the steady-state solution to \eqref{main eq}-\eqref{boundary conditions}, we have the following \emph{Taylor sign condition}
\begin{align}\label{Taylor sign}
\nabla_{\mathcal{N}}p\le-c_{0}<0,\quad on \quad \partial\calB ,\quad\quad where\quad \nabla_{\mathcal{N}}=\mathcal{N}^{a}\partial_{a}.
\end{align}
Since the solution we shall construct in this paper is a small perturbation of the steady-state solution up to time $T^{\delta}$, provided that $\theta_{0}$ is sufficiently small, the condition \eqref{Taylor sign} holds also for the perturbed solution. Therefore \(a\) must be positive and the left hand side of the energy identity \(\eqref{energy}\) controls
\begin{align*}
\int_{\partial\calB(T)}\frac{1}{a}\left| B\phi\right| ^{2}dS+\int_{\calB(T)}\left| \partial_{t,x}\phi\right| ^{2}dx.
\end{align*}
Next, we give two energy estimates for the wave operator:
\begin{lemma}\label{3.3.}
	There is a (future-directed and timelike) vectorfield \(Q\) such that for any \(\phi\) which is constant on \(\partial\calB\),
	\begin{align}\label{estimate1}
	\begin{split}
	&\int_{\calB\left( T\right) }\left| \partial_{t,x}\phi\right|^{2}dx+\int_{0}^{T}\int_{\partial\calB\left( t\right)}\left| \partial_{t,x}\phi\right|^{2}dSdt\\
	&\lesssim\int_{\calB\left( 0\right) }\left| \partial_{t,x}\phi\right|^{2}dx+\left| \int_{0}^{T}\int_{\calB\left( t\right) } \left( \square_{g}\phi\right) \left( Q\phi\right)  dxdt\right| \\
	&+\int_{0}^{T}\int_{\calB\left( t\right) }\left| \frac{1}{2}\left( \partial_{\alpha}Q^{\alpha}\right) |\nabla\phi|^{2}+\frac{1}{2}Q\left(g^{\alpha\beta} \right)\left(\partial_{\alpha}\phi \right)\left(\partial_{\beta}\phi \right) -\partial^{\alpha}\left(Q^{\beta} \right)\left(\partial_{\alpha}\phi \right)\left(\partial_{\beta}\phi \right)+\frac{1}{2}|g|^{-\frac{1}{2}}Q\left( |g|^{\frac{1}{2}} \right) |\nabla\phi|^{2} \right|  dxdt.
	\end{split}
	\end{align}
\end{lemma}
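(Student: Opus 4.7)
I would apply the standard Friedrichs--Lax multiplier method with multiplier
\[
Q = B + \beta\,\tilde{n},
\]
where $\tilde{n}$ is a smooth extension into $\bigcup_{t\in[0,T]}\calB(t)$ of the outward unit $g$-normal $n$ to $\partial\calB(t)$, and $\beta>0$ is a small constant to be fixed below. Since $B$ is future-directed unit timelike, while on $\partial\calB$ the vectorfield $\tilde n$ is spacelike and $g$-orthogonal to $B$ (a short computation using the explicit form \eqref{2.4} of $g$ and $B=\partial_t+u^b\partial_b$ together with $-\nabla^{(g)}\bm{\uprho}=an$), the vectorfield $Q$ is future-directed and timelike throughout the slab provided $\beta$ is sufficiently small.

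The core step is to apply identity \eqref{key identity} with $X=Q$ and integrate over $\bigcup_{t\in[0,T]}\calB(t)$. Stokes's theorem converts the integral of $\div P$ into fluxes across the three components of the boundary: $\calB(0)$, $\calB(T)$, and the lateral surface $\partial\calB$. Moving the $(\square_g\phi)(Q\phi)$ term and the interior error terms (those containing $\partial_\alpha Q^\alpha$, $Q(g^{\alpha\beta})$, $\partial^\alpha Q^\beta$, and $Q(|g|^{1/2})$) to the right-hand side with absolute values accounts exactly for every summand on the RHS of \eqref{estimate1}; what remains is to show that the three boundary fluxes reproduce the $\calB(T)$ and $\partial\calB$ terms on the LHS and are dominated on $\calB(0)$ by the initial energy.

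On $\calB(T)$, using $\langle B,B\rangle=-1$, expanding $\langle P,B\rangle = (Q\phi)(B\phi)-\tfrac12\langle Q,B\rangle|\nabla\phi|^2$ gives the quadratic form $(B\phi)^2+\beta(\tilde n\phi)(B\phi)+\tfrac12|\nabla\phi|^2+O(\beta)|\nabla\phi|^2$, which after Cauchy--Schwarz with $\beta$ small is bounded below by $c\,|\partial_{t,x}\phi|^2$; the same expression at $t=0$ gives the matching upper bound on $\calB(0)$. The most delicate flux is the lateral one: since $\phi$ is constant on $\partial\calB$ and $B$ is tangent to $\partial\calB$, we have $B\phi=0$ and all tangential derivatives of $\phi$ vanish, so $\nabla\phi=(n\phi)\,n$ and $|\nabla\phi|^2=(n\phi)^2$. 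Combined with $\langle\tilde n,B\rangle=0$ on $\partial\calB$, this yields $\langle P,n\rangle=\tfrac{\beta}{2}(n\phi)^2$, which is strictly positive and comparable to $|\partial_{t,x}\phi|^2\big|_{\partial\calB}$.

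The main obstacle is precisely the tradeoff in choosing $\beta$: it must be small enough that $Q$ remains timelike throughout the slab and that the Cauchy--Schwarz bound on $\calB(T)$ closes, yet bounded strictly away from zero so that the coefficient $\beta/2$ in front of $(n\phi)^2$ produces the strict lateral-boundary control of $\int_0^T\!\int_{\partial\calB(t)}|\partial_{t,x}\phi|^2\,dS\,dt$ appearing on the LHS of \eqref{estimate1}. Because the geometric quantities $c_s$, $a$, $B$, $n$ are smooth and uniformly bounded above and below on the perturbed configuration up to the escape time $T^\delta$, and because the Taylor sign condition \eqref{Taylor sign} keeps $a$ bounded below, such a $\beta$ can be chosen uniformly. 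Collecting the three boundary contributions and rearranging then gives \eqref{estimate1}.
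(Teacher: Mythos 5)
Your overall strategy is the paper's: multiply by a vectorfield $Q$ mixing $B$ with the normal, integrate \eqref{key identity} over the slab, and evaluate the lateral flux using $B\phi=0$ and $|\nabla\phi|^{2}=(n\phi)^{2}$ on $\partial\calB$. However, the sign of the normal component of $Q$ is wrong, and that sign is exactly the point of this lemma. In the integrated identity \eqref{energyQ} the lateral boundary contributes $-\int_{0}^{T}\int_{\partial\calB(t)}n_{\alpha}P^{\alpha}\,dS\,dt$ on the same side as the top-slice energy (outgoing flux decreases the energy at time $T$), and on $\partial\calB$ one has $n_{\alpha}P^{\alpha}=\tfrac{1}{2}Q^{n}(n\phi)^{2}$ with $Q^{n}=\langle Q,n\rangle$. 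So the term that lands next to $\int_{\calB(T)}|\partial_{t,x}\phi|^{2}\,dx$ is $-\tfrac{1}{2}Q^{n}(n\phi)^{2}$, which is coercive only if $Q^{n}<0$. Your choice $Q=B+\beta\tilde n$ has $Q^{n}=\beta>0$: you compute $\langle P,n\rangle=\tfrac{\beta}{2}(n\phi)^{2}>0$ correctly, but then attach it to the wrong side of the identity; what actually appears on the left is $-\tfrac{\beta}{2}\int\int(n\phi)^{2}$, a negative term that destroys rather than supplies the boundary control claimed in \eqref{estimate1}. This is not a convention quibble: the paper deliberately takes $Q^{n}>0$ (namely $Q=\nu B+n$) in Lemma \ref{3.4.}, and the price there is precisely that the boundary terms $(n\phi)^{2}+(B\phi)^{2}$ must be carried as errors on the right of \eqref{estimate2} instead of appearing as gains on the left.

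The fix is immediate: take $Q=\nu B-\tilde n$ (the paper's choice) with $\nu>1$, so that $\langle Q,Q\rangle=-\nu^{2}+\langle n,n\rangle<0$ keeps $Q$ future-directed timelike while $Q^{n}=-1$ makes the lateral flux $+\tfrac{1}{2}(n\phi)^{2}$. The remainder of your outline --- coercivity on $\calB(T)$ via Cauchy--Schwarz, the matching bound on $\calB(0)$, and uniform boundedness of the geometric quantities up to $T^{\delta}$ --- is sound and matches the paper. Note also that the tension you describe between keeping $\beta$ small (for timelikeness) and bounded away from zero (for boundary coercivity) disappears in the correct normalization: one enlarges the $B$-component rather than shrinking the normal one.
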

\begin{proof}
	Integrating \(\eqref{key identity}\) with \(X=Q\) over \(\cup_{t\in[0,T]}\calB(t)\), we get
	\begin{align}\label{energyQ}
	\begin{split}
	&\int_{\calB(T)}\left( \left( Q\phi\right) \left( B\phi\right) +\frac{1}{2}Q^{0}|\nabla\phi|^{2}\right) dx-\int_{0}^{T}\int_{\partial\calB(t)}n_{\alpha}\left( \left( Q\phi\right) \left( \partial^{\alpha}\phi\right) -\frac{1}{2}Q^{\alpha}|\nabla\phi|^{2}\right) dSdt\\
	=&\int_{\calB(0)}\left( \left( Q\phi\right) \left( B\phi\right) +\frac{1}{2}Q^{0}|\nabla\phi|^{2}\right) dx-\int_{0}^{T}\int_{\calB(t)}\left( \square_{g}\phi\right) \left( Q\phi\right) dxdt+\int_{0}^{T}\int_{\calB(t)}\frac{1}{2}Q\left(g^{\alpha\beta} \right)\left(\partial_{\alpha}\phi \right)\left(\partial_{\beta}\phi \right) dxdt\\
	&+\int_{0}^{T}\int_{\calB(t)}\frac{1}{2}\left(\partial_{\alpha}Q^{\alpha}\right) |\nabla\phi|^{2}dxdt-\int_{0}^{T}\int_{\calB(t)}\partial^{\alpha}\left(Q^{\beta} \right)\left(\partial_{\alpha}\phi \right)\left(\partial_{\beta}\phi \right) dxdt+\int_{0}^{T}\int_{\calB(t)}\frac{1}{2}|g|^{-\frac{1}{2}}Q\left( |g|^{\frac{1}{2}} \right) |\nabla\phi|^{2}dxdt.
	\end{split}
	\end{align}
	Since \(\phi\) is constant on \(\partial\calB\) , \(\partial_{\alpha}\phi\partial^{\alpha}\phi=\pm\sqrt{\partial_{\alpha}\phi\partial^{\alpha}\phi}\left( n\phi\right)\). We get
	\begin{align}
	\partial_{\alpha}\phi\partial^{\alpha}\phi=\left( n\phi\right) ^{2}.
	\end{align}
	and
	\begin{align}
	n_{\alpha}\left( \left( Q\phi\right) \left( \partial^{\alpha}\phi\right) -\frac{1}{2}Q^{\alpha}\left( \partial_{\beta}\phi\right) \left( \partial^{\beta}\phi\right)\right)=\frac{1}{2}Q^{n}\left( n\phi\right) ^{2}  
	\end{align}
	on \(\partial\calB\), where \(Q^{n}=\left\langle Q,n\right\rangle\). Therefore letting \(Q\) be a  future-directed timelike vectorfield with \(Q^{n}<0\) in \(\eqref{energyQ}\) (for instance \(Q= \nu B-n\) for some large \(\nu\)) we arrive at \(\eqref{estimate1}\).
\end{proof}
This energy estimate can be used for the second equation in \(\eqref{2.34}\).
\begin{lemma}\label{3.4.}
	There exists a (future-directed and timelike) vectorfield \(Q\) such that for any function \(\phi\),
	\begin{align}\label{estimate2}
	\begin{split}
	&\int_{\calB\left( T\right) }\left| \partial_{t,x}\phi\right|^{2}dx+\int_{0}^{T}\int_{\partial\calB\left( t\right)}\left| \partial_{t,x}\phi\right|^{2}dSdt\\
	&\lesssim\int_{\calB\left( 0\right) }\left| \partial_{t,x}\phi\right|^{2}dx+\left| \int_{0}^{T}\int_{\calB\left( t\right) } \left( \square_{g}\phi\right) \left( Q\phi\right)  dxdt\right| +\int_{0}^{T}\int_{\partial\calB\left( t\right) }\left( \left( n\phi\right) ^{2}+\left( B\phi\right) ^{2}\right) dSdt\\
	&+\int_{0}^{T}\int_{\calB\left( t\right) }\left| \frac{1}{2}\left( \partial_{\alpha}Q^{\alpha}\right) |\nabla\phi|^{2}+\frac{1}{2}Q\left(g^{\alpha\beta} \right)\left(\partial_{\alpha}\phi \right)\left(\partial_{\beta}\phi \right) -\partial^{\alpha}\left(Q^{\beta} \right)\left(\partial_{\alpha}\phi \right)\left(\partial_{\beta}\phi \right)+\frac{1}{2}|g|^{-\frac{1}{2}}Q\left( |g|^{\frac{1}{2}} \right) |\nabla\phi|^{2} \right|  dxdt.
	\end{split}
	\end{align}
\end{lemma}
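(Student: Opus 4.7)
My plan is to mirror the proof of Lemma~\ref{3.3.} with the same multiplier $Q = \nu B - n$ for $\nu > 0$ sufficiently large, integrating the divergence identity \eqref{key identity} over $\cup_{t\in[0,T]}\calB(t)$ and applying Stokes' theorem. The contributions on the time slices $\calB(0)$ and $\calB(T)$ are unchanged from that lemma: using $g(B,B) = -1$ and $\langle B,Q\rangle = -\nu$, the integrand $B^\alpha P_\alpha$ is a quadratic form in $\partial_{t,x}\phi$ that becomes coercive once $\nu$ is taken sufficiently large, producing the interior $\int_{\calB(T)}|\partial_{t,x}\phi|^2$ term on the left and the corresponding data term on the right. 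The bulk divergence yields $(\square_g\phi)(Q\phi)$ together with the same collection of lower-order bulk quadratic terms in $\nabla\phi$ that already appear on the right-hand side of \eqref{estimate2}.

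The new work, relative to Lemma~\ref{3.3.}, lies in the lateral flux: without the assumption that $\phi$ is constant on $\partial\calB$, the boundary integrand does not reduce to $(n\phi)^2$. A direct computation using $g(B,n)=0$, $g(n,n)=1$, and the expression $g^{-1}=-B\otimes B + c_s^2\sum_a \partial_a\otimes\partial_a$ from \eqref{2.4} gives
\[
n^\alpha P_\alpha \;=\; \nu(B\phi)(n\phi) - (n\phi)^2 - \tfrac{1}{2}(B\phi)^2 + \tfrac{c_s^2}{2}\sum_{a=1}^{3}(\partial_a\phi)^2.
\]
I would retain the nonnegative spatial piece $\tfrac{c_s^2}{2}\sum_a(\partial_a\phi)^2$ on the left-hand side, observe that together with $(B\phi)^2$ it dominates the full Euclidean norm $|\partial_{t,x}\phi|^2$ on $\partial\calB(t)$ via the identity $\partial_t\phi = B\phi - u^a\partial_a\phi$ (using the uniform boundedness of $u$ and $c_s^{-1}$ valid for near-equilibrium solutions), and move the remaining three indefinite contributions to the right-hand side. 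Young's inequality applied to the cross term, $|\nu(B\phi)(n\phi)| \leq \epsilon(B\phi)^2 + C_\epsilon(n\phi)^2$, then absorbs everything into the advertised $\int_0^T\int_{\partial\calB(t)}((n\phi)^2 + (B\phi)^2)\,dSdt$ contribution.

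The main obstacle I foresee is precisely the structural fact exhibited in the display above: because $\partial\calB$ is a timelike hypersurface, its induced metric is Lorentzian and the natural tangential quadratic form in $\nabla\phi$ is \emph{not} sign-definite. What saves the argument is that the specific choice $Q = \nu B - n$ eliminates the dangerous tangential spatial--spatial indefinite cross terms, leaving only the manifestly nonnegative Euclidean sum $\sum_a(\partial_a\phi)^2$ together with decoupled $(n\phi)^2$, $(B\phi)^2$, and $\nu(B\phi)(n\phi)$ contributions. Once this clean separation is in hand, the remaining steps --- Young's absorption, conversion between $|\partial_{t,x}\phi|^2$ and the $(B\phi)^2 + \sum_a(\partial_a\phi)^2$ combination, and term-by-term collection of the bulk errors from $\partial_\alpha Q^\alpha$, $Q(g^{\alpha\beta})$, $\partial^\alpha Q^\beta$, and $Q(|g|^{1/2})$ --- are routine and exactly parallel the end of the proof of Lemma~\ref{3.3.}.
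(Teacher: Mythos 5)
Your overall strategy --- integrating \eqref{key identity} over $\cup_{t\in[0,T]}\calB(t)$, isolating the lateral flux, and absorbing the indefinite pieces by Young's inequality --- is the paper's strategy, but your choice of multiplier has the wrong sign, and the error is fatal rather than cosmetic. The paper takes $Q=\nu B+n$ here, i.e.\ $\langle Q,n\rangle>0$, which is the \emph{opposite} of the choice in Lemma \ref{3.3.}; you take $Q=\nu B-n$ ``the same as Lemma \ref{3.3.}''. The quantity that sits on the left-hand side of the energy identity \eqref{energyQ} is $-n_{\alpha}P^{\alpha}$, not $+n_{\alpha}P^{\alpha}$. Your displayed formula for $n^{\alpha}P_{\alpha}$ with $Q=\nu B-n$ is correct, but what must be bounded from below is its negative,
\begin{align*}
-n_{\alpha}P^{\alpha}=-\nu(B\phi)(n\phi)+(n\phi)^{2}+\tfrac{1}{2}(B\phi)^{2}-\tfrac{c_{s}^{2}}{2}\sum_{a=1}^{3}(\partial_{a}\phi)^{2},
\end{align*}
in which the spatial gradient now carries a \emph{negative} coefficient. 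The part of $\sum_{a}(\partial_{a}\phi)^{2}$ consisting of derivatives tangential to $\partial\calB(t)$ is not controlled by $(n\phi)^{2}+(B\phi)^{2}$, so it cannot be moved to the right-hand side and absorbed --- and those tangential derivatives are precisely part of the $|\partial_{t,x}\phi|^{2}$ you are trying to control on the boundary. (A consistency check that one of your signs must be off: if the lateral flux really entered with the $+$ sign you implicitly use, then in Lemma \ref{3.3.} the same $Q=\nu B-n$ would contribute $\tfrac{1}{2}Q^{n}(n\phi)^{2}=-\tfrac{1}{2}(n\phi)^{2}<0$ to the left-hand side, and that lemma would fail too.)

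The repair is exactly the paper's choice $Q=\nu B+n$, for which
\begin{align*}
-n_{\alpha}P^{\alpha}=-\nu(B\phi)(n\phi)-(n\phi)^{2}-\tfrac{1}{2}(B\phi)^{2}+\tfrac{c_{s}^{2}}{2}\sum_{a=1}^{3}(\partial_{a}\phi)^{2}
\ \ge\ c_{1}\left|\partial_{t,x}\phi\right|^{2}-c_{2}\left((n\phi)^{2}+(B\phi)^{2}\right),
\end{align*}
after which your Young absorption of the cross term, the conversion via $\partial_{t}\phi=B\phi-u^{a}\partial_{a}\phi$, and the collection of the bulk error terms go through as you describe. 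The structural point you should take away is that the sign of $\langle Q,n\rangle$ must flip between the two lemmas: in \eqref{estimate1} the Dirichlet condition collapses the flux to $\tfrac{1}{2}Q^{n}(n\phi)^{2}$ and one needs $Q^{n}<0$, whereas for general $\phi$ the tangential gradient survives in $-n_{\alpha}P^{\alpha}$ with coefficient $\tfrac{1}{2}Q^{n}c_{s}^{2}$ and one needs $Q^{n}>0$.
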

\begin{proof}
	We choose \(Q^{n}=\left\langle Q,n\right\rangle>0\). For instance, let \(Q=\nu B+n\) with \(\nu>0\) chosen so that \(Q\) is future-directed and timelike. Then on \(\partial\calB\), 
	\begin{align*}
	\begin{split}
	&-n_{\alpha}\left( \left( Q\phi\right) \left( \partial^{\alpha}\phi\right) -\frac{1}{2}Q^{\alpha}\left( \partial_{\beta}\phi\right) \left( \partial^{\beta}\phi\right)\right)\\
	=&-\left(\nu B\phi\right) \left( n\phi\right) -\left( n\phi\right) ^{2}+\frac{1}{2}\partial_{\alpha}\phi\partial^{\alpha}\phi
	\ge c_{1}\left| \partial_{t,x}\phi\right|^{2}-c_{2}\left( \left( n\phi\right) ^{2}+\left( B\phi\right) ^{2}\right) 
	\end{split}
	\end{align*}
	for some constants \(c_{1}, c_{2}>0\) depending only on \(B\). The lemma follows by \(\eqref{energyQ}\) with \(Q=\nu B+n\).
\end{proof}
This energy estimate can be used for controlling arbitrary derivatives of an arbitrary function on the
boundary in terms of the normal and material derivatives.
\begin{lemma}\label{3.5.}
	For any \(\phi\),
	\begin{align}\label{3.20}
	c_{s}^{2}\Delta\phi=\square_{g}\phi+BB\phi-\left( 3c_{s}^{-1}c'_{s}+1\right) B\bm{\uprho}B\phi+c_{s}c'_{s}\delta^{ab}\partial_{a}\bm{\uprho}\partial_{b}\phi.
	\end{align}
\end{lemma}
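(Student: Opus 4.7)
The statement is a direct algebraic rearrangement of the wave operator formula already established in Lemma 2.2 (equation \eqref{2.11}), so my plan is purely computational: invert that identity to solve for the flat Laplacian $c_{s}^{2}\Delta\phi = c_{s}^{2}\delta^{ab}\partial_{a}\partial_{b}\phi$ and then simplify the resulting $B\phi$ coefficient into the clean form stated.

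First I would start from \eqref{2.11} and move $c_{s}^{2}\delta^{ab}\partial_{a}\partial_{b}\phi$ to the left-hand side, placing $\square_{g}\phi + BB\phi$ on the right. This produces three $B\phi$-type correction terms: $-2c_{s}^{-1}c'_{s}(B\varepsilon)B\phi$, $+(\partial_{a}u^{a})B\phi$, and the contracted term $c_{s}^{-1}c'_{s}(g^{-1})^{\alpha\beta}(\partial_{\alpha}\bm{\uprho})(\partial_{\beta}\phi)$. Then I would apply two basic identities from Section~\ref{2}: since $B\bar{\bm{\uprho}} = 0$ by \eqref{2.3} we have $B\varepsilon = B\bm{\uprho}$, and from the continuity equation \eqref{2.1} we have $\partial_{a}u^{a} = -B\bm{\uprho}$. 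These two substitutions convert the first two corrections into $-(2c_{s}^{-1}c'_{s}+1)(B\bm{\uprho})(B\phi)$.

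Next I would split the remaining contracted term using the explicit form of $g^{-1}$ in \eqref{2.4}:
\begin{equation*}
(g^{-1})^{\alpha\beta}(\partial_{\alpha}\bm{\uprho})(\partial_{\beta}\phi) = -(B\bm{\uprho})(B\phi) + c_{s}^{2}\delta^{ab}(\partial_{a}\bm{\uprho})(\partial_{b}\phi).
\end{equation*}
Multiplying by $c_{s}^{-1}c'_{s}$ contributes a further $-c_{s}^{-1}c'_{s}(B\bm{\uprho})(B\phi)$ to the $B\phi$ coefficient (bringing it to the advertised $-(3c_{s}^{-1}c'_{s}+1)(B\bm{\uprho})(B\phi)$), together with the leftover spatial term $c_{s}c'_{s}\delta^{ab}(\partial_{a}\bm{\uprho})(\partial_{b}\phi)$. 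Collecting everything yields exactly \eqref{3.20}.

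There is no genuine analytic obstacle here; the only thing to watch is sign bookkeeping when handling $\partial_{a}u^{a} = -B\bm{\uprho}$ and when decomposing $(g^{-1})^{\alpha\beta}(\partial_{\alpha}\bm{\uprho})(\partial_{\beta}\phi)$ into its timelike and spatial parts, since a sign error in either place corrupts the coefficient in front of $B\bm{\uprho}\, B\phi$. Once both substitutions are made in the right order, the identity follows by collecting like terms.
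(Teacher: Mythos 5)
Your proposal is correct and coincides with the paper's approach: the paper's entire proof is the single line ``This is a direct consequence of \eqref{2.11}'', and your computation — solving \eqref{2.11} for $c_{s}^{2}\delta^{ab}\partial_{a}\partial_{b}\phi$, substituting $B\varepsilon=B\bm{\uprho}$ and $\partial_{a}u^{a}=-B\bm{\uprho}$, and splitting $(g^{-1})^{\alpha\beta}(\partial_{\alpha}\bm{\uprho})(\partial_{\beta}\phi)$ into its $B\otimes B$ and spatial parts — is exactly the algebra the paper leaves implicit, with all signs and the final coefficient $-(3c_{s}^{-1}c'_{s}+1)$ coming out right.
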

\begin{proof}
This is a direct consequence of \(\eqref{2.11}\).
\end{proof}
 In order to apply Lemma \(\ref{3.5.}\), we introduce the following standard elliptic estimates, whose proof can be found in \cite{Taylor-book1}:
\begin{lemma}\label{3.6.}
	For any \(t>0,k=0,1\), we have
	\begin{align}
	\Arrowvert\phi\Arrowvert_{H^{k+1}\left( \calB_{t}\right) }\lesssim\Arrowvert\Delta\phi\Arrowvert_{L^{2}\left( \calB_{t}\right) }+\Arrowvert\phi\Arrowvert_{H^{k+\frac{1}{2}}\left( \partial\calB_{t}\right) },
	\end{align}
	and
	\begin{align}
	\Arrowvert\phi\Arrowvert_{H^{2}\left( \calB_{t}\right) }\lesssim\Arrowvert\Delta\phi\Arrowvert_{L^{2}\left( \calB_{t}\right) }+\Arrowvert N\phi\Arrowvert_{H^{\frac{1}{2}}\left( \partial\calB_{t}\right) },
	\end{align}
	where \(N\) is a transversal vectorfield to \(\partial\calB_{t}\subseteq\calB_{t}\), and where the implicit constants depend on \(\calB_{t}\).
\end{lemma}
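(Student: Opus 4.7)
The plan is to establish both estimates by the classical \emph{energy method plus localization} strategy for second-order elliptic boundary value problems on a fixed smooth bounded domain; since $\calB_t$ is a sufficiently regular domain at each time $t$ and the implicit constants are allowed to depend on it, I would not attempt any uniformity. The first inequality I treat as a Dirichlet-type regularity statement and the second as a Neumann-type (oblique) regularity statement.

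For the first estimate in the case $k=0$, I would decompose $\phi = h + \psi$, where $h$ is the harmonic extension of $\phi|_{\partial\calB_t}$, so that $\|h\|_{H^{1}(\calB_t)} \lesssim \|\phi\|_{H^{1/2}(\partial\calB_t)}$ by Lax--Milgram, while $\psi := \phi - h$ satisfies $\Delta \psi = \Delta \phi$ in $\calB_t$ with $\psi = 0$ on $\partial\calB_t$. Testing against $\psi$ gives
\[
\|\nabla\psi\|_{L^{2}}^{2} = -\int_{\calB_t}\psi\,\Delta\phi\,dx,
\]
and Poincaré's inequality yields $\|\psi\|_{H^{1}} \lesssim \|\Delta\phi\|_{L^{2}}$. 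Summing the two contributions gives the claim.

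For the first estimate with $k=1$ and for the second (Neumann) estimate, I would use a partition of unity to reduce to two types of local problems. In interior charts, flat-space Calderón--Zygmund theory applied to cutoffs of $\phi$ yields full second-order control in terms of $\|\Delta\phi\|_{L^{2}}$. In boundary collar charts, I would flatten $\partial\calB_t$ by a smooth diffeomorphism and obtain tangential $H^{2}$ control via Nirenberg-type finite difference quotients in the directions parallel to the boundary, after first subtracting a suitable extension of the boundary data (Dirichlet trace in the first case, oblique data in the second). The remaining normal-normal second derivative $\partial_{N}^{2}\phi$ is then recovered algebraically by solving the relation $\Delta\phi = f$ for this component; the transversality hypothesis on $N$ in the Neumann estimate is precisely what makes this recovery nondegenerate.

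The main obstacle is purely bookkeeping: one must verify that the harmonic extension, the cutoff multipliers, and the difference quotient arguments can be localized and patched consistently under the boundary-flattening change of variables, and that the flattened operator retains enough coefficient regularity for the bounds to propagate back to $\calB_t$. On a smooth domain these points are entirely classical and are carried out in detail in \cite{Taylor-book1}; they do not interact with the free-boundary dynamics elsewhere in the paper because the lemma is used as a black box on each fixed time slice.
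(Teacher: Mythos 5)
The paper does not actually prove this lemma: it is introduced as a ``standard elliptic estimate'' and delegated wholesale to the citation of Taylor's book. Your outline is therefore not a different route from the paper's proof so much as a reconstruction of the textbook argument the paper points to, and as such it is essentially sound: harmonic lifting plus the energy identity and Poincar\'e for the $k=0$ Dirichlet case, and partition of unity, boundary flattening, tangential difference quotients, and algebraic recovery of the normal--normal derivative from the equation for the $H^{2}$ cases. For the $k=1$ Dirichlet estimate the lower-order term $\Arrowvert\phi\Arrowvert_{H^{1}(\calB_{t})}$ that the localization inevitably leaves on the right-hand side can be absorbed using your $k=0$ estimate together with $\Arrowvert\phi\Arrowvert_{H^{1/2}(\partial\calB_{t})}\lesssim\Arrowvert\phi\Arrowvert_{H^{3/2}(\partial\calB_{t})}$, so that case closes.

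There is, however, one point you should not gloss over: the second (oblique/Neumann) inequality is false as literally stated, and hence cannot be proved by your method or any other. Taking $\phi\equiv 1$ gives $\Delta\phi=0$ and $N\phi=0$ on $\partial\calB_{t}$ while $\Arrowvert\phi\Arrowvert_{H^{2}(\calB_{t})}>0$. What your difference-quotient argument actually yields is
\begin{align*}
\Arrowvert\phi\Arrowvert_{H^{2}(\calB_{t})}\lesssim\Arrowvert\Delta\phi\Arrowvert_{L^{2}(\calB_{t})}+\Arrowvert N\phi\Arrowvert_{H^{1/2}(\partial\calB_{t})}+\Arrowvert\phi\Arrowvert_{L^{2}(\calB_{t})},
\end{align*}
after interpolating the intermediate $\Arrowvert\phi\Arrowvert_{H^{1}}$ term down to $L^{2}$; the $L^{2}$ term cannot be removed because constants lie in the kernel of the boundary value problem. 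This is a defect of the lemma as stated in the paper rather than of your argument, but your write-up should prove (and the paper should invoke) the corrected version with the lower-order term, which is how the estimate appears in Taylor and how it is harmless in the applications in Section 5, where $L^{2}$ norms are always retained on the right-hand side anyway. Beyond this, the only caveat is that the oblique boundary operator $N$ (transversal but not necessarily normal) requires the trace lifting of the oblique data $N\phi\in H^{1/2}(\partial\calB_{t})$ to an $H^{2}$ function, which you assert but should spell out; it is standard since the Laplacian with any non-tangential first-order boundary condition satisfies the complementing condition.
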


\subsection{Higher order equations}Here we derive the higher order versions of \(\eqref{2.33}\) and \(\eqref{2.34}\). We give some important commutator identities, which are valid for any \(\phi\).
\begin{align}\label{commutator1}
&[B,\partial_{a}]\phi=-\left( \partial_{a}u^{b}\right) \left( \partial_{b}\phi\right) .\\\label{commutator2}
&[B,\partial_{a}\partial_{b}]\phi=-\left( \partial_{a}u^{c}\right) \left( \partial_{b}\partial_{c}\phi\right) -\left( \partial_{b}u^{c}\right) \left( \partial_{a}\partial_{c}\phi\right) -\left( \partial_{a}\partial_{b}u^{c}\right) \left( \partial_{c}\phi\right) \\
&[B,\Delta]\phi=-2\delta^{ab}\left( \partial_{a}u^{c}\right) \left( \partial_{b}\partial_{c}\phi\right) +\delta^{ab}\left(\partial_{a}B\bm{\uprho}\right) \left( \partial_{b}\phi\right)  .
\end{align}
\begin{align}
\begin{split}
[B,B^{2}-c_{s}^{2}\delta^{ab}\partial_{a}\bm{\uprho}\partial_{b}]\phi=&c_{s}^{2}\delta^{ab}\left( \partial_{a}\bm{\uprho}\right) \left( \partial_{b}u^{c}\right) \left( \partial_{c}\phi\right) +c_{s}^{2}\delta^{ab}\left( \partial_{c}\bm{\uprho}\right) \left( \partial_{b}u^{c}\right) \left( \partial_{a}\phi\right)\\
&-c_{s}^{2}\delta^{ab}\left( \partial_{a}B\bm{\uprho}\right) \left( \partial_{b}\phi\right),
\quad \textrm{on}\quad \partial\calB . 
\end{split}
\end{align}
\begin{align}
\begin{split}
[B,\square_{g}]\phi=&-c_{s}^{2}\delta^{ab}\left( \partial_{a}\partial_{b}u^{c}\right) \left( \partial_{c}\phi\right) -2c_{s}^{2}\delta^{ab}\left( \partial_{a}u^{c}\right) \left( \partial_{b}\partial_{c}\phi\right) +2c_{s}c'_{s}B\bm{\uprho}\delta^{ab}\partial_{a}\partial_{b}\phi\\
&+3\left( c_{s}^{-1}c''_{s}-c_{s}^{-2}c'_{s}c'_{s}\right) \left( B\bm{\uprho}\right) ^{2}B\phi+\left( 1+3c_{s}^{-1}c'_{s}\right) BB\bm{\uprho}B\phi\\
&-\left( c'_{s}c'_{s}+c_{s}c''_{s}\right) B\bm{\uprho}\delta^{ab}\partial_{a}\bm{\uprho}\partial_{b}\phi-c_{s}c'_{s}\delta^{ab}\left( \partial_{a}B\bm{\uprho}\right) \left( \partial_{b}\phi\right) \\ \label{3.27}
&+c_{s}c'_{s}\delta^{ab}\left( \partial_{a}\bm{\uprho}\right) \left( \partial_{b}u^{c}\right) \left( \partial_{c}\phi\right)+c_{s}c'_{s}\delta^{ab}\left( \partial_{c}\bm{\uprho}\right) \left( \partial_{b}u^{c}\right) \left( \partial_{a}\phi\right).
\end{split}
\end{align}
The above identities can be obtained by direct calculation. Applying \(\eqref{commutator1}-\eqref{3.27}\) we can calculate the higher order versions of \(\eqref{2.33}\) and \(\eqref{2.34}\), which we record in
the following lemmas.

\begin{lemma}\label{3.7.}
	For any \(k\ge 0\)
	\begin{align}\label{3.28}
	\left( B^{2}+an\right) B^{k}u=-c_{s}^{2}\nabla B^{k+1}\varepsilon+F_{k}
	\end{align}
	where \(F_{k}\) is a linear combination (coefficients are related to \(c_{s}\)) of terms of the forms\\
	\begin{enumerate}[\quad (1)]    		
		\item \(\left( \nabla B^{k_{1}}u\right)...\left( \nabla B^{k_{m}}u\right)\left( \nabla B^{k_{m+1}}\bm{\uprho}\right)\), where \(k_{1}+\cdot\cdot\cdot+k_{m+1}\le k-1\).\label{k1}
		\item \(\left( \nabla B^{k_{1}}u\right)...\left( \nabla B^{k_{m}}u\right)\left( \nabla B^{k_{m+1}}B\bm{\uprho}\right)\), where \(k_{1}+\cdot\cdot\cdot+k_{m+1}\le k-1\).\label{k2}
		\item \(\left( B^{k+1}\nabla\psi\right) \)\label{k3}
	\end{enumerate}
\end{lemma}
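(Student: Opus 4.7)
The proof proceeds by induction on $k$. The base case $k=0$ is essentially the content of equation \eqref{2.5}: since $B\bar{\bm{\uprho}}=0$ (equation \eqref{2.3}), we have $\nabla B\varepsilon=\nabla B\bm{\uprho}$, so the left side of \eqref{3.28} at $k=0$ matches the left side of \eqref{2.5}; the remaining source $-B\nabla_i\psi$ is exactly a type-\eqref{k3} term $B^{k+1}\nabla\psi$ with $k=0$, while \eqref{k1} and \eqref{k2} are vacuous.

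For the inductive step, assume \eqref{3.28} at level $k$ and apply $B$ to both sides. Using $[B,B^2]=0$ on the left, and the Leibniz rule on the right, one obtains
\begin{align*}
(B^2+an)B^{k+1}u+[B,an]B^k u=-c_s^2\, B\nabla B^{k+1}\varepsilon-(Bc_s^2)\,\nabla B^{k+1}\varepsilon+BF_k.
\end{align*}
The commutator identity \eqref{commutator1}, $[B,\partial_a]\phi=-(\partial_a u^b)\partial_b\phi$, gives
\begin{align*}
B\nabla B^{k+1}\varepsilon=\nabla B^{k+2}\varepsilon-(\nabla u)(\nabla B^{k+1}\varepsilon).
\end{align*}
Substituting and rearranging yields $(B^2+an)B^{k+1}u=-c_s^2\nabla B^{k+2}\varepsilon+F_{k+1}$ with
\begin{align*}
F_{k+1}=c_s^2(\nabla u)(\nabla B^{k+1}\varepsilon)-(Bc_s^2)\,\nabla B^{k+1}\varepsilon+BF_k-[B,an]B^k u.
\end{align*}

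It remains to classify every piece of $F_{k+1}$ as a linear combination of terms of the forms \eqref{k1}, \eqref{k2}, \eqref{k3} with index sum $\le k=(k+1)-1$. Since $B\bar{\bm{\uprho}}=0$ we write $B^{k+1}\varepsilon=B^{k+1}\bm{\uprho}=B^k(B\bm{\uprho})$; using also $Bc_s^2=2c_sc'_sB\bm{\uprho}=-2c_sc'_s\,\partial_a u^a$ from \eqref{2.1}, both of the first two summands become of type \eqref{k2} with pattern $(\nabla u)(\nabla B^k(B\bm{\uprho}))$ and index sum equal to $k$. For $BF_k$, I apply the product rule termwise to the inductive classification of $F_k$: a $B$-derivative either passes through a $\nabla$ (raising some $k_i$-index by one, so the sum grows from $\le k-1$ to $\le k$), or generates an error factor $-(\nabla u)(\nabla B^{k_i}u)$ or $-(\nabla u)(\nabla B^{k_i}\bm{\uprho})$ via \eqref{commutator1} (adding a $\nabla B^0 u$ factor without raising the sum), or acts on a $c_s$-dependent coefficient and produces an extra $\partial_a u^a$ factor by the chain rule together with $B\bm{\uprho}=-\partial_a u^a$. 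Acting on a type-\eqref{k3} term simply takes $B^{k+1}\nabla\psi$ to $B^{k+2}\nabla\psi$, which is type \eqref{k3} at level $k+1$. A type-\eqref{k2} term in which $B$ hits the $\nabla B^{k_{m+1}}B\bm{\uprho}$ factor produces $\nabla B^{k_{m+1}+1}(B\bm{\uprho})$ plus a $\nabla u$-error, still of type \eqref{k2}.

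The main obstacle is the commutator $[B,an]B^ku$, since $a$ and $n$ are boundary-defined and depend on $\bm{\uprho}$. Expand
\begin{align*}
[B,an]=(Ba)\,n+a\,[B,n].
\end{align*}
Since $a=\sqrt{\partial_\alpha\bm{\uprho}\,\partial^\alpha\bm{\uprho}}$, the chain rule together with \eqref{commutator1} shows that $Ba$ is a linear combination, with coefficients depending on $\bm{\uprho}$ and $\nabla\bm{\uprho}$, of $\nabla B\bm{\uprho}$ and $(\nabla u)(\nabla\bm{\uprho})$; similarly, differentiating the identity $n=-\nabla^{(g)}\bm{\uprho}/a$ (valid on $\partial\calB$) along $B$ writes $[B,n]$ in terms of the same data. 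The factor $nB^k u$ is a first-order spatial derivative of $B^k u$, hence a $\nabla B^k u$-type quantity; pairing it with the previous ingredients yields only products of $\nabla B^0 u$, $\nabla B^0\bm{\uprho}$, $\nabla B^0(B\bm{\uprho})$, $\nabla B^k u$, all with index sum $\le k$, and so produces only terms of types \eqref{k1} and \eqref{k2} at level $k+1$. This closes the induction.
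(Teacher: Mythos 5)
Your induction is correct and follows essentially the same strategy as the paper: base case from the first equation of \eqref{2.33}, then commute one $B$, use \eqref{commutator1} to move $B$ past $\nabla$, and classify the resulting terms by index count. The only genuine divergence is in how you treat $[B,an]B^{k}u$. The paper does not split this as $(Ba)n+a[B,n]$; instead it invokes the boundary identity $an\phi=-c_{s}^{2}\delta^{ab}(\partial_{a}\bm{\uprho})\partial_{b}\phi$ on $\partial\calB$ (established in the proof of \eqref{2.5}), so that $B(anB^{j}u)$ is computed in one line via \eqref{commutator1} together with $Bc_{s}^{2}=2c_{s}c_{s}'B\bm{\uprho}=0$ on the boundary, and the coefficients that appear are literally powers of $c_{s}$ as the lemma asserts. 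Your route can be made to work, but two points need to be said explicitly. First, your intermediate coefficients involve $a^{-1}$ and the components of $n$, i.e.\ functions of $\nabla\bm{\uprho}$ rather than of $c_{s}$ alone; this is harmless (the Taylor sign condition keeps $a$ bounded below, and the $a^{-1}$ factors cancel upon recombination), but as written it does not match the stated form of the coefficients. Second, when you differentiate $n=-\nabla^{(g)}\bm{\uprho}/a$ along $B$, you implicitly differentiate the components of $g^{-1}$, and $B$ acting on the $B\otimes B$ part produces $Bu^{a}=-c_{s}^{2}\partial_{a}\bm{\uprho}-\partial_{a}\psi$; the potential $\nabla\psi$ contributions are multiplied by $B\bm{\uprho}$ or $BB\bm{\uprho}$, which vanish on $\partial\calB$ because $B$ is tangent to the spacetime boundary and $\bm{\uprho}\equiv0$ there. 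Without this observation your claim that $[B,n]$ involves only $\nabla B\bm{\uprho}$ and $(\nabla u)(\nabla\bm{\uprho})$ is not obvious. The paper's choice of working with the single object $an$ sidesteps both issues.
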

\begin{proof}
	We proceed inductively. For $k=0$ the statement already contained in the first equation in \eqref{2.33}. 
	\begin{align*}
	B\left( anB^{j}u\right)=B\left( -c_{s}^{2}\delta^{ab}\partial_{a}\bm{\uprho}\partial_{b}B^{j}u\right) =&anB^{j+1}u-c_{s}^{2}\delta^{ab}\left( \partial_{a}B\bm{\uprho}\right) \left( \partial_{b}B^{j}u\right)\\
	&+c_{s}^{2}\delta^{ab}\left( \partial_{b}u^{c}\right) \left( \left( \partial_{a}\bm{\uprho}\right) \left( \partial_{c}B^{j}u\right) +\left( \partial_{c}\bm{\uprho}\right) \left( \partial_{a}B^{j}u\right) \right)  ,
	\end{align*}
	so \([B,B^{2}+an]B^{j}u\) has the right form.  Next, in view of \(\eqref{commutator1}\), \(B\) applied to the terms in (1), (2) and (3) with \(k\) replaced by \(j\), as well as \(\nabla B^{j+1}\varepsilon\), also has the desired form.
\end{proof}
\begin{lemma}\label{3.8.}
	For any \(k\ge 0\)
	\begin{align}\label{3.29}
	\square_{g}B^{k}u=G_{k}
	\end{align}
	where \(G_{k}\) is a linear combination (coefficients are related to \(c_{s}\) and its derivatives) of terms of the forms\\
	\begin{enumerate}[\quad (1)]
		\item \(\left( \nabla B^{k_{1}}u\right)...\left( \nabla B^{k_{m}}u\right)\left( \nabla B^{k_{m+1}}\bm{\uprho}\right)\), where \(k_{1}+\cdot\cdot\cdot+k_{m+1}\le k\).\label{j1}
		\item \(\left( \nabla B^{k_{1}}u\right)...\left( \nabla B^{k_{m}}u\right)\left( \nabla^{(2)}B^{k_{m+1}}u\right) \), where \(k_{1}+\cdot\cdot\cdot+k_{m+1}\le k-1\).\label{j2}
		\item \(\left( \nabla B^{k_{1}}u\right)...\left( \nabla B^{k_{m}}u\right) \left( B^{k_{m+1}}\nabla\psi\right) \), where \(k_{1}+\cdot\cdot\cdot+k_{m}\le k\) and \(k_{1}+\cdot\cdot\cdot+k_{m+1}\le k+1\)\label{j3}
	\end{enumerate}
\end{lemma}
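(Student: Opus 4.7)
The proof will proceed by induction on $k$, closely paralleling the strategy used in the proof of Lemma \ref{3.7.}. For the base case $k=0$, I would verify directly from \eqref{2.15} that $\square_g u^i$ fits the required template: expanding $(g^{-1})^{\alpha\beta}\partial_\alpha \bm{\uprho}\,\partial_\beta u^i = -(B\bm{\uprho})(Bu^i) + c_s^2 \delta^{ab}(\partial_a\bm{\uprho})(\partial_b u^i)$ exhibits two factors of the form $(\nabla B^{0}u)(\nabla B^0 \bm{\uprho})$, matching form (1) with $k_1+k_2=0$. The term $B\nabla_i\psi$ is of type (3) with $m=0$ and $k_1=1\le k+1$. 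The term $(B\varepsilon)\nabla_i\psi$ is converted using the continuity equation \eqref{2.1} together with $B\bar{\bm{\uprho}}=0$ to give $B\varepsilon = -\partial_a u^a$, turning the product into $(\nabla u)(\nabla\psi)$, again of type (3).

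For the inductive step, assuming $\square_g B^k u^i = G_k$ holds with $G_k$ having the stated structure, I would write
\[
\square_g B^{k+1} u^i = B(\square_g B^k u^i) + [\square_g, B] B^k u^i = B G_k - [B,\square_g] B^k u^i,
\]
and treat the two pieces separately. For $B G_k$, I apply $B$ to each summand term-by-term, using the commutator $[B,\partial_a]\phi = -(\partial_a u^b)(\partial_b\phi)$ from \eqref{commutator1} (and its iterated version \eqref{commutator2}) to move $B$'s past spatial derivatives. Each time $B$ hits a factor $\nabla B^{k_j}u$ one obtains either $\nabla B^{k_j+1} u$ (raising that index by one, so the total increases by exactly one, from $\le k$ to $\le k+1$) or a commutator error $(\nabla u)(\nabla B^{k_j}u)$ which adds a factor of index $0$; analogously for factors of the form $\nabla^{(2)}B^{k_j}u$, $\nabla B^{k_j}\bm{\uprho}$, and $B^{k_j}\nabla\psi$. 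Every resulting term remains a product of the three allowed elementary factors with the updated degree budget. For the commutator piece, I apply \eqref{3.27} with $\phi = B^k u^i$: the terms $-c_s^2\delta^{ab}(\partial_a\partial_b u^c)(\partial_c B^k u^i)$ and $-2c_s^2\delta^{ab}(\partial_a u^c)(\partial_b\partial_c B^k u^i)$ already fit form (2) with degree sum $k\le (k+1)-1$; terms containing $B\bm{\uprho}$, $BB\bm{\uprho}$, or $(B\bm{\uprho})^2$ as coefficients are rewritten using the continuity equation and its $B$-derivative to express them as polynomials in $\nabla B^j u$ (of appropriate $B$-order), keeping the terms inside forms (1)--(2); the Poisson contribution reduces to the case (3) template since $B^k u$ only provides additional $\nabla B^j u$ factors.

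The main technical obstacle is not any single estimate but rather the bookkeeping: one must verify exhaustively that every commutator error produced, both from $B G_k$ and from $[\square_g,B] B^k u$, respects the index constraints in (1)--(3), and that every appearance of $B\bm{\uprho}$, $B^2\bm{\uprho}$, $B\nabla\bm{\uprho}$, etc.\ is systematically reduced to a product of $\nabla B^j u$ factors using the continuity equation (differentiated as needed). A secondary subtlety is the handling of the gravitational source: the iterated material derivative $B^{k+1}\nabla\psi$ is what we want for type (3), and one must confirm via \eqref{2.27} applied to $\phi=\psi$ that each application of $B$ to $B^{k_j}\nabla\psi$ produces either $B^{k_j+1}\nabla\psi$ or lower-order products $(\nabla u)(B^{j}\nabla\psi)$ with $j\le k_j$, both of which are admissible under form (3).
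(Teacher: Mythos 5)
Your proposal is correct and follows essentially the same route as the paper: induction on $k$ with base case read off from the second equation of \eqref{2.33}, and inductive step splitting $\square_g B^{k+1}u$ into $BG_k$ (handled via the commutators \eqref{commutator1}--\eqref{commutator2}) plus the commutator $[B,\square_g]B^k u$ from \eqref{3.27}. Your version is in fact more careful than the paper's terse argument about the bookkeeping of index budgets and about eliminating $B\bm{\uprho}$, $BB\bm{\uprho}$, and $B^{j+1}u$ factors via the continuity and momentum equations, which is exactly the step the paper leaves implicit.
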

\begin{proof}
	Again we proceed inductively. For \(k=0\) the statement already contained in the second equation in \(\eqref{2.33}\). Assume it holds for \(k=j\) and let us prove it for \(k=j+1\). By computing, \([B,\square_{g}]B^{j}u\) has the right form contained in (1) and (2). Similarly, \(B\) applied to above forms have the desired forms by \(\eqref{commutator1}\) and \(\eqref{commutator2}\).
\end{proof}
\begin{lemma}\label{3.9.}
	For any \(k\ge 0\)
	\begin{align}\label{3.30}
	\square_{g}B^{k+1}\varepsilon=H_{k}
	\end{align}
	where \(H_{k}\) is a linear combination (coefficients are related to \(c_{s}\) and its derivatives) of terms of the forms\\
	\begin{enumerate}[\quad (1)]
		\item \(\left( \nabla B^{k_{1}}u\right)...\left( \nabla B^{k_{m}}u\right)\left( \nabla B^{k_{m+1}}\bm{\uprho}\right) \left( \nabla B^{k_{m+2}}B\bm{\uprho}\right)\), where \(k_{1}+\cdot\cdot\cdot+k_{m}\le k-1\) and \(k_{1}+\cdot\cdot\cdot+k_{m+2}\le k\).
		\label{i1}\item \(\left( \nabla B^{k_{1}}u\right)...\left( \nabla B^{k_{m}}u\right)\left( \nabla B^{k_{m+1}}\bm{\uprho}\right) \left( \nabla B^{k_{m+2}}\bm{\uprho}\right)\), where \(k_{1}+\cdot\cdot\cdot+k_{m+2}\le k\).
		\label{i2}\item \(\left( \nabla B^{k_{1}}u\right)...\left( \nabla B^{k_{m}}u\right)\left( \nabla^{\left( 2\right) } B^{k_{m+1}}\bm{\uprho}\right) \), where \(k_{1}+\cdot\cdot\cdot+k_{m+1}\le k\).
		\label{i3}\item \(\left( \nabla B^{k_{1}}u\right)...\left( \nabla B^{k_{m}}u\right)\left( \nabla B^{k_{m+1}}\bm{\uprho}\right) \left( \nabla^{\left( 2\right) } B^{k_{m+2}}u\right) \), where \(k_{1}+\cdot\cdot\cdot+k_{m+2}\le k-1\).
		\label{i4}\item \(\left( \nabla B^{k_{1}}u\right)...\left( \nabla B^{k_{m}}u\right)\), where \(k_{1}+\cdot\cdot\cdot+k_{m}\le k\).
		\label{i5}\item \(\left( \nabla B^{k_{1}}u\right)...\left( \nabla B^{k_{m}}u\right)\left(B^{k_{m+1}}\nabla\nabla\psi \right) \), where \(k_{1}+\cdot\cdot\cdot+k_{m+1}\le k\).	\label{i6}
		\item \(\left( B^{k+1}\Delta\psi\right) \).\label{i7}
	\end{enumerate}
\end{lemma}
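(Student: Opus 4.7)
The proof I would give follows the same inductive pattern as Lemmas~\ref{3.7.} and \ref{3.8.}, so the structure is mechanical; the content is an exercise in carefully bookkeeping commutators. The base case $k=0$ is nothing other than the second equation of \eqref{2.34}, namely \eqref{2.24}. I would verify term by term that each summand on the right of \eqref{2.24} fits one of the schematic classes $(1)$--$(7)$ with $k=0$. In doing so the crucial reductions are $B\bar{\bm{\uprho}}=0$ (so $B\varepsilon=B\bm{\uprho}$ and $BB\varepsilon=BB\bm{\uprho}$), together with the identities $B\bm{\uprho}=-\div u$ and $BB\varepsilon$ given by \eqref{2.22}, which allow any factor of $B\bm{\uprho}$ or $BB\bm{\uprho}$ to be traded for spatial derivatives of $u$, $\bm{\uprho}$, or $\psi$ so that the resulting expression really does match the schema. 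The terms involving $B\Delta\psi$ and $(\partial_a\partial_b\psi)(\partial_a u^b)$ directly populate classes $(7)$ and $(6)$.

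For the inductive step, I assume \eqref{3.30} for some $k\geq 0$ and apply $B$ to it:
\begin{align*}
\square_g B^{k+2}\varepsilon = [\square_g,B]\,B^{k+1}\varepsilon + B H_k.
\end{align*}
The commutator $[\square_g,B]$ is given by \eqref{3.27}; applied to $\phi=B^{k+1}\varepsilon$, each of its terms is of the form (a derivative of) $\bm{\uprho}$ or $B\bm{\uprho}$ times $\nabla$ or $\nabla^{(2)}$ applied to $B^{k+1}\varepsilon=B^{k+1}\bm{\uprho}+\text{(time independent background)}$, so it falls into one of the classes $(1)$--$(5)$ with $k$ replaced by $k+1$ after again invoking $B\bm{\uprho}=-\div u$ to rewrite $B$-factors as $\nabla u$-factors. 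For $BH_k$, I commute $B$ past each factor in each of the seven schematic terms using \eqref{commutator1}, \eqref{commutator2}, together with $[B,\Delta]$ and $[B,\nabla\nabla]$: when $B$ is placed directly on a factor $\nabla B^{k_i}u$, $\nabla B^{k_i}\bm{\uprho}$, $\nabla^{(2)}B^{k_i}u$, $\nabla^{(2)}B^{k_i}\bm{\uprho}$, $B^{k_i}\nabla\nabla\psi$, or $B^{k+1}\Delta\psi$, the index $k_i$ is promoted to $k_i+1$ and the total budget increases by exactly one, which is precisely the budget increment allowed by passing from $k$ to $k+1$. The commutator errors produced in the process are always of the form $(\nabla u)\cdot(\text{lower order})$ or $(\nabla^{(2)}u)\cdot(\text{lower order})$, and one verifies by a short case analysis that each such error term can be absorbed into one of the seven classes without violating any of the sum constraints.

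The main obstacle is not conceptual but combinatorial: one has to check every possible commutator produced by $B$ acting on a generic product of the allowed form and confirm that it still lives in the union of the seven classes with the correct budget. The only substantive subtlety in this bookkeeping occurs with the gravitational classes $(6)$ and $(7)$, since $\psi$ is nonlocal and is coupled through $\Delta\psi=4\pi\rho\chi_{\calB(t)}$; here I would not try to rewrite $B^{j}\Delta\psi$ in terms of $\rho$ (that will be handled later via the Clifford-analysis machinery of Section~\ref{4}) but simply track $B^{k+1}\Delta\psi$ and $B^{k_i}\nabla\nabla\psi$ as stand-alone factors, using only $[B,\Delta]\psi$ and $[B,\nabla\nabla]\psi$ to account for any newly generated pieces, and verifying that these fit back into classes $(6)$--$(7)$.

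Finally, I would remark that all of these manipulations are purely algebraic in spacetime derivatives and do not rely on any boundary condition, so the resulting identity \eqref{3.30} holds pointwise in $\calB(t)$ for every $k\geq 0$.
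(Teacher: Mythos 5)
Your proposal is correct and follows essentially the same route as the paper: the base case is the second equation of \eqref{2.34}, and the inductive step combines the commutator $[B,\square_g]$ from \eqref{3.27} with the identities \eqref{commutator1}--\eqref{commutator2} to check that $B$ applied to each schematic class stays within the classes, with the gravitational factors tracked as stand-alone terms. The paper's own proof is just a terser version of the same bookkeeping.
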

\begin{proof}
	For \(k=0\) the statement already contained in the second equation in \(\eqref{2.34}\). Assume it holds for \(k=j\) and let us prove it for \(k=j+1\). By computing, \([B,\square_{g}]B^{j+1}\varepsilon\) has the right form. Similarly, \(B\) applied to above forms have the desired forms by \(\eqref{commutator1}\) and \(\eqref{commutator2}\).
\end{proof}
Compared to the system studied in \cite{MSW}, the gravitational potential \(\psi\) on the right-hand side of equation \(\eqref{3.28}-\eqref{3.30}\) will also affect the energy estimate. We shall use the Hilbert transform to control its contribution.

\section{Analytic tools}\label{4}
In this section we first recall basic algebraic properties of the Clifford algebra \(\mathcal{C}=Cl_{0,2}(\mathbb{R})\). The algebra \(\calC\) is the associative algebra generated by the four basis elements \((1,e_{1},e_{2},e_{3})\) over \(\bbR\), satisfying the relatins
\begin{align*}
1e_{i}=e_{i},\quad e_{i}e_{j}=-e_{j}e_{i},\quad i\ne j,\ i,j=1,2,3,\quad e_{1}e_{2}=e_{3},\quad e_{i}^{2}=-1,\ i=1,2,3.
\end{align*}
Every element \(a\in\calC\) has a unique representation \(a=a^{0}+\sum_{i=1}^{3}a^{i}e_{i}\). We identify the real numbers \(\bbR\) with Clifford numbers using the relation \(a\mapsto a1\), and identify vectors in \(\bbR^{3}\) with Clifford vectors using the relation \(\vec{a}\mapsto a^{i}e_{i}\). The Clifford differentiation operator \(\calD\), acting on Clifford algebra-valued functions, is defines as 
\begin{align*}
\calD=\sum_{i=1}^{3}\partial_{x^{i}}e_{i},
\end{align*}
where \(x=(x^{1},x^{2},x^{3})\) are the usual rectangular coordinates in \(\bbR^{3}\). Let \(\Omega\) be a \(C^{2}\), bounded, and simply-connected domain in \(\bbR^{3}\) with boundary \(\Sigma\). We say that a function \(f\) defined on \(\Omega\) is Clifford analytic, if \(\calD f=0\). It is straightforward to verify that a vector-valued function \(f\) is Clifford analytic is equivalent to \(f\) being curl and divergence free. For the convenience of following discussion, we introduce the Lagrangian parametrization of the surface. Let \(\xi:\bbR\times S_{R}\to\partial\calB\) be the Lagrangian parametrization of \(\partial\calB=\partial\calB(t)\), satisfying \(\xi(0,p)=p\) for all \(p\) in \(S_{R}\) and 
\begin{align}
\xi_{t}(t,p)=u(t,\xi(t,p)).
\end{align}
\(n(t,p)={\bfn}(t,\xi(t,p))\) denote the exterior unit normal to \(\partial\calB(t)\). In arbitrary (orientation preserving) local coordinates \((\alpha,\beta)\) on \(S_{R}\) we have
\begin{align}
n=\frac{N}{|N|},\quad where \quad N=\xi_{\alpha}\times\xi_{\beta}.
\end{align}
If \({\bf f}:\calB \to \bbR\) is a (possibly time-dependent) differentiable function, and \(f={\bff}\circ\xi\), then by a slight abuse of notation we write
\begin{align}
\nabla f=\left( \nabla{\bff}\right)\circ\xi,\quad df=\left(d{\bff} \right)  \circ\xi,
\end{align}
where \(d\) denotes the exterior differentiation operators on \(\partial\calB\). With this
notation, and using the fact that \(N=\xi_{\alpha}\times\xi_{\beta}\)
\begin{align}
n\times\nabla f:=\left( {\bfn}\times\nabla{\bff}\right) \circ\xi=\frac{\xi_{\beta}f_{\alpha}-\xi_{\alpha}f_{\beta}}{|N|}.
\end{align}
For a Clifford algebra-valued function \(f\) (possibly time-dependent) on \(\Sigma\) we define the Hilbert
transform of \(f\) as
\begin{align}
H_{\Sigma}f(\xi)=p.v.\int_{\Sigma}K(\xi'-\xi)n(\xi')f(\xi')dS(\xi'),\quad  \xi\in\Sigma,
\end{align} 
where 
\begin{align}
K(x):=-\frac{1}{2\pi}\frac{x}{|x|^{3}},\quad\quad x\in \bbR^{3},
\end{align}
and \(K(\xi'-\xi)n(\xi')f(\xi')\) are usual Clifford product. Then we introduce an important property of Hilbert transform described in the following Lemma.
\begin{lemma}
	\cite{clifford}\cite{wu1999well}If \(f\) is the restriction
	to \(\Sigma\) of a Clifford analytic function \(\bff\) defined in a neighborhood of \(\Omega\), then \(f(\xi)=H_{\Sigma}f(\xi)\). Similarly, if \(f\) is the restriction
	to \(\Sigma\) of a Clifford analytic function \(\bff\) defined in a neighborhood of \(\Omega^{c}\), then \(f(\xi)=-H_{\Sigma}f(\xi)\). Finally the operator \(H_{\Sigma}:L^{2}(\Sigma,dS)\to L^{2}(\Sigma,dS)\)  are bounded and linear.
\end{lemma}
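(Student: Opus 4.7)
The plan is to reduce both the interior/exterior reproducing identities and the $L^{2}$ boundedness to the Cauchy-Clifford integral formula together with the Calder\'on--Zygmund theory on $C^{2}$ surfaces.

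First I would verify that $K$ is (up to a definite constant $c$) the fundamental solution of the Clifford Dirac operator $\calD$ on $\bbR^{3}$: the computation $\calD K(x)=0$ for $x\neq 0$ is direct, and $\int_{\partial B_{\epsilon}(0)}K(y)n(y)\,dS(y)$ is a nonzero element of $\calC$ independent of $\epsilon>0$. I would then apply the Clifford version of Stokes' theorem to the product $K(\xi'-x)n(\xi')\bff(\xi')$ on $\Omega\setminus\overline{B_{\epsilon}(x)}$ and let $\epsilon\to 0^{+}$, arriving at the Cauchy-Clifford reproducing formula
\[
\bff(x)=c\int_{\Sigma}K(\xi'-x)n(\xi')\bff(\xi')\,dS(\xi'),\qquad x\in\Omega,
\]
valid for any $\bff$ Clifford analytic in a neighborhood of $\overline{\Omega}$. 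The same argument on $\Omega^{c}$ yields the analogous formula with the opposite sign, since $\partial\Omega^{c}$ agrees with $\Sigma$ but carries the reversed outward normal $-n$.

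Second, to extract the boundary identities, I would take the nontangential limit $x\to\xi\in\Sigma$ in the interior reproducing formula. The $C^{2}$ regularity of $\Sigma$ permits a tangent-plane approximation of a small geodesic disk in $\Sigma$ centered at $\xi$, and combined with the rotational symmetry of $K$ the near-singular contribution converges to a fixed multiple of $f(\xi)$, while the off-diagonal remainder converges to a multiple of the principal-value integral $H_{\Sigma}f(\xi)$. This is the Plemelj-Sokhotski jump relation in the Clifford setting, and the normalization of $K$ in the definition of $H_{\Sigma}$ is precisely chosen so that, after rearrangement, one obtains $f=H_{\Sigma}f$ on $\Sigma$. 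Running the same limiting argument from the exterior side, with the opposite sign inherited from the Cauchy formula on $\Omega^{c}$, then gives $f=-H_{\Sigma}f$.

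Third, for the $L^{2}$ boundedness, I would observe that the kernel $(\xi,\xi')\mapsto K(\xi'-\xi)n(\xi')$ is a standard singular integral kernel on $\Sigma$: it is homogeneous of degree $-2$ away from the diagonal, the $C^{2}$ regularity of $\Sigma$ supplies the usual size and smoothness estimates, and the mean-zero property $\int_{S^{2}}K(\omega)\,d\omega=0$ provides the cancellation needed at the diagonal. The $L^{2}$ bound then follows from the Coifman--McIntosh--Meyer theorem for the Cauchy integral on Lipschitz graphs (of which our $C^{2}$ surface $\Sigma$ is a very smooth instance), or alternatively by a more elementary argument combining a tangent-plane approximation with a Schur-type estimate on the remainder. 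This $L^{2}$ bound is the main obstacle: although classical in Clifford analysis, a self-contained proof requires nontrivial Calder\'on--Zygmund machinery, which is precisely why we simply cite \cite{clifford,wu1999well}.
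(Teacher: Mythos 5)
The paper does not actually prove this lemma: it is stated as a citation to \cite{clifford} and \cite{wu1999well}, so there is no in-paper argument to compare yours against. Your outline is the standard route and is essentially correct. In particular, your claim that the normalization of \(K\) is chosen so the jump relation collapses to \(f=H_{\Sigma}f\) checks out: a direct computation gives \(\int_{\partial B_{\epsilon}(0)}K(y)n(y)\,dS(y)=2\) (using \(v^{2}=-|v|^{2}\) for Clifford vectors), so the Cauchy--Clifford formula reads \(\bff(x)=\tfrac12\int_{\Sigma}K(\xi'-x)n(\xi')\bff(\xi')\,dS(\xi')\) for \(x\in\Omega\), and the interior nontangential limit yields \(f=\tfrac12\bigl(H_{\Sigma}f+f\bigr)\cdot 2^{0}\,\)\(=\tfrac12 H_{\Sigma}f+\tfrac12 f\), i.e.\ \(f=H_{\Sigma}f\). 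The \(L^{2}\) bound via Coifman--McIntosh--Meyer (or a tangent-plane plus Schur argument, which suffices on a \(C^{2}\) surface) is the right citation and is indeed the only genuinely nontrivial analytic input.

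One point you should make explicit: the exterior identity \(f=-H_{\Sigma}f\) requires that \(\bff\) decay at infinity. In the Cauchy formula on \(\Omega^{c}\cap B_{R}\), the contribution of \(\partial B_{R}\) is \(\int_{\partial B_{R}}K(y-x)n(y)\bff(y)\,dS(y)\), which has size \(O(R^{-2})\cdot O(R^{2})\cdot\sup_{\partial B_{R}}|\bff|\) and tends to \(2\bff(\infty)\) rather than \(0\) if \(\bff\) has a nonzero limit at infinity; a constant function is Clifford analytic on \(\Omega^{c}\) and visibly fails \(f=-H_{\Sigma}f\). So the exterior statement holds for \(\bff\) vanishing at infinity, which is the case in the paper's application (\(\bff=\nabla\phi\) with \(\phi\) a Newtonian potential, decaying like \(|x|^{-2}\)), but your ``same argument with the opposite sign'' needs this extra line.
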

In our application, we express the gravitational term on $\partial\calB(t)$ in terms of Hilbert transform (see \cite{MS}). By the boundedness of the operator \(H_{\Sigma}\), we can control the gravitational term on the boundary. We achieve this process through the following Lemma. 

\begin{lemma}\label{4.2.}
	Suppose the functions \(\phi:\bbR^{3}\to\bbR\) and \(\Phi:\overline{\calB}\to\bbR\) satisfy
	\begin{equation}\label{4.7}
	\Delta \phi=\left\{
	\begin{aligned}
	&\Delta \Phi \quad \quad in\  \calB(t),\\
	&0\quad\quad \ \ \ in \  \overline{\mathcal{B} (t)} ^{c}
	\end{aligned}
	\right.
	\end{equation}
	Then
	\begin{equation}
	\nabla \phi=\frac{1}{2}(I-H_{\partial\calB_{t}})\nabla\Phi,\quad on\ \partial\calB_{t}.
	\end{equation}
\end{lemma}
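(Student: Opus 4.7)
The plan is to interpret $\nabla\phi$ (on the exterior) and $\nabla(\phi-\Phi)$ (on the interior) as Clifford analytic Clifford-vector-valued functions on either side of $\partial\calB_t$, then combine the two boundary jump-type identities supplied by the preceding lemma on Hilbert transforms.

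First I would verify that $\nabla\phi$, viewed as a Clifford vector, is Clifford analytic on $\overline{\calB(t)}^{c}$. As a gradient it is automatically curl-free, and the hypothesis $\Delta\phi=0$ on $\overline{\calB(t)}^{c}$ together with the identity $\calD(\nabla\phi)=-\text{div}(\nabla\phi)+\text{curl}(\nabla\phi)=-\Delta\phi$ gives $\calD(\nabla\phi)=0$ there. Assuming the standard decay of $\nabla\phi$ at infinity (which is implicit: $\phi$ is the Newtonian-type solution to \eqref{4.7}, so $|\nabla\phi|\lesssim |x|^{-2}$ at infinity), the exterior half of the preceding lemma yields
\begin{equation}\label{ext}
\nabla\phi=-H_{\partial\calB_t}\nabla\phi \quad \text{on } \partial\calB_t.
\end{equation}

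Next I would apply the interior half of the same lemma to $\nabla(\phi-\Phi)$ on $\calB(t)$: it is curl-free as a gradient, and divergence-free since $\Delta(\phi-\Phi)=0$ in $\calB(t)$ by \eqref{4.7}. Hence it is Clifford analytic in $\calB(t)$, so
\begin{equation}\label{int}
\nabla\phi-\nabla\Phi=H_{\partial\calB_t}(\nabla\phi-\nabla\Phi)\quad \text{on } \partial\calB_t.
\end{equation}
Substituting \eqref{ext} into the right-hand side of \eqref{int} gives
\[
\nabla\phi-\nabla\Phi=-\nabla\phi-H_{\partial\calB_t}\nabla\Phi,
\]
so $2\nabla\phi=(I-H_{\partial\calB_t})\nabla\Phi$, which is the desired identity.

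The only mildly delicate point is the justification of \eqref{ext}, which depends on $\nabla\phi$ decaying fast enough at infinity to apply the exterior Cauchy-type formula. This is not an obstacle in the context of the paper since $\phi$ will play the role of a gravitational potential whose source is compactly supported, but it should be stated (or noted as an implicit standing hypothesis on $\phi$) when the lemma is invoked. Everything else is a direct algebraic manipulation of the two identities \eqref{ext} and \eqref{int}.
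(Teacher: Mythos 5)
Your proposal is correct and follows essentially the same route as the paper: interior Clifford analyticity of $\nabla(\phi-\Phi)$ gives $(I-H_{\partial\calB_t})\nabla\phi=(I-H_{\partial\calB_t})\nabla\Phi$, exterior analyticity of $\nabla\phi$ gives $(I+H_{\partial\calB_t})\nabla\phi=0$, and the two are combined algebraically. Your added remark about the decay of $\nabla\phi$ at infinity needed to justify the exterior identity is a reasonable observation that the paper leaves implicit, but it does not change the argument.
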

\begin{proof}
	According to \(\eqref{4.7}\), we have
	\begin{align}
	\nabla\cdot\left( \nabla\phi-\nabla\Phi\right) =0\quad and \quad \nabla\times\left( \nabla\phi-\nabla\Phi\right) =0,\quad\quad in\  \calB(t).
	\end{align}
	This indicates that \((\nabla\phi-\nabla\Phi)\) is a Clifford analytic function in \(\calB(t)\) and hence
	\begin{align}
	\begin{split}
	&H_{\partial\calB_{t}}\left( \nabla\phi-\nabla\Phi \right) =\nabla\phi-\nabla\Phi\\
	\Rightarrow \quad \quad &(I-H_{\partial\calB_{t}})\nabla\phi=(I-H_{\partial\calB_{t}})\nabla\Phi.
	\end{split}
	\end{align}
	Similarly since \(\nabla\phi\) is curl and divergence-free outside of \(\calB(t)\),
	we get
	\begin{align}
	(I+H_{\partial\calB_{t}})\nabla\phi=0.
	\end{align} 
	The desired result follows because 
	\begin{align}
	\nabla\phi=\frac{1}{2}(I-H_{\partial\calB_{t}})\nabla\phi+\frac{1}{2}(I+H_{\partial\calB_{t}})\nabla\phi=\frac{1}{2}(I-H_{\partial\calB_{t}})\nabla\Phi.
	\end{align}
\end{proof}
This theorem allows us to use the Hilbert transform to estimate \(\Arrowvert\nabla\psi\Arrowvert_{L^{2}\left( \partial\calB_{t}\right) }\). In the application we usually take \(\Phi=0\) on \(\partial\calB(t)\). %This overcomes the difficulty that \(\psi\) is not free on \(\partial\calB(t)\).
\begin{lemma}
	Given the Euler-Poisson system \(\eqref{main eq}-\eqref{boundary conditions}\), we have the following equation for the gravitational potential:
	\begin{align}
	\partial_{t}\psi=\nabla\cdot\int_{\calB(t)}\frac{\rho u}{|x-y|}dy
	\end{align}
\end{lemma}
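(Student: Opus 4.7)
The plan is to differentiate the explicit formula \eqref{potential} in $t$, treating both the explicit time dependence of $\rho$ and the motion of the domain $\calB(t)$, and then to use the continuity equation in \eqref{main eq} together with integration by parts to collapse everything into the divergence of a single volume integral.

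First I would apply Reynolds' transport theorem, noting that by the boundary condition in \eqref{boundary conditions} the boundary $\partial\calB(t)$ is transported by the fluid, so the normal boundary velocity is $u\cdot n$. This gives
\begin{align*}
\partial_{t}\psi(t,x)=-\int_{\calB(t)}\frac{\partial_{t}\rho(t,y)}{|x-y|}\,dy-\int_{\partial\calB(t)}\frac{\rho(t,y)\,u(t,y)\cdot n(t,y)}{|x-y|}\,dS(y).
\end{align*}
Next I would substitute the continuity equation $\partial_{t}\rho=-\div(\rho u)$ from \eqref{main eq} into the bulk integral and integrate by parts in $y$:
\begin{align*}
-\int_{\calB(t)}\frac{\partial_{t}\rho}{|x-y|}\,dy=\int_{\calB(t)}\frac{\div_{y}(\rho u)}{|x-y|}\,dy=\int_{\partial\calB(t)}\frac{\rho u\cdot n}{|x-y|}\,dS-\int_{\calB(t)}\rho u\cdot\nabla_{y}\frac{1}{|x-y|}\,dy.
\end{align*}
The boundary contribution cancels exactly against the boundary term from Reynolds' theorem, leaving only the last volume integral.

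Finally, using $\nabla_{y}|x-y|^{-1}=-\nabla_{x}|x-y|^{-1}$ and bringing the $x$-gradient outside the integral, I obtain
\begin{align*}
\partial_{t}\psi(t,x)=\int_{\calB(t)}\rho u\cdot\nabla_{x}\frac{1}{|x-y|}\,dy=\nabla_{x}\cdot\int_{\calB(t)}\frac{\rho u}{|x-y|}\,dy,
\end{align*}
which is the claimed identity.

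The only mildly delicate step is justifying the differentiation under the integral sign through the weak Newtonian singularity $|x-y|^{-1}$ (and $|x-y|^{-2}$ after taking $\nabla_{x}$); this is standard for volume potentials with bounded, compactly supported density in $\bbR^{3}$ and can be handled by a routine approximation argument cutting out a small ball around $x$. Beyond this, the argument is just a direct computation: Reynolds' transport theorem, the continuity equation, and one integration by parts, with an exact cancellation of the two boundary contributions.
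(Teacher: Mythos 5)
Your proposal is correct and follows essentially the same route as the paper: Reynolds' transport theorem, the continuity equation, and the identity $\nabla_{y}|x-y|^{-1}=-\nabla_{x}|x-y|^{-1}$. The only cosmetic difference is that the paper writes the transport theorem directly in divergence form, $\partial_{t}\int_{\calB(t)}f\,dy=\int_{\calB(t)}\bigl(\partial_{t}f+\div(fu)\bigr)dy$, so the boundary term you make explicit and then cancel never appears; the cancellation you observe is exactly the cancellation of $\partial_{t}\rho+\rho\div u+u\cdot\nabla\rho$ in the paper's computation.
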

\begin{proof}
	According to the definition of the gravitational potential \(\psi\), we have
	\begin{align}
	\begin{split}
	\partial_{t}\psi&=-\frac{\partial}{\partial t}\int_{\calB(t)}\frac{\rho}{|x-y|}dy\\
	&=-\int_{\calB(t)}\frac{\partial_{t}\rho}{|x-y|}+\div\left( \frac{\rho u}{|x-y|}\right) dy\\
	&=-\int_{\calB(t)}\frac{\partial_{t}\rho}{|x-y|}+\frac{\rho\div u}{|x-y|}+\frac{u\cdot\rho}{|x-y|}+\rho u\cdot\nabla_{y}\left(\frac{1}{|x-y|} \right) dy.
	\end{split}
	\end{align}
	Using the momentum equation in \(\eqref{main eq}\), the first three terms under the integral cancel. Therefore
	\begin{align}
	\partial_{t}\psi=-\int_{\calB(t)}\rho u\cdot\nabla_{y}\left(\frac{1}{|x-y|} \right) dy=\int_{\calB(t)}\rho u\cdot\nabla_{x}\left(\frac{1}{|x-y|} \right) dy=\nabla\cdot\int_{\calB(t)}\frac{\rho u}{|x-y|} dy.
	\end{align}
\end{proof}
The above result allows us to estimate \(\Arrowvert\nabla_{t}\psi\Arrowvert_{L^{2}\left( \partial\calB_{t}\right) }\) using Lemma \(\ref{4.2.}\) hence \(\Arrowvert B^{k}\psi\Arrowvert_{L^{2}\left( \partial\calB_{t}\right) }\) as well.

\begin{proposition}\label{4.4.}
	Suppose the bootstrap assumptions \(\eqref{5.4}\) hold. Then the following estimates hold for any function \(f\) defined on \(\partial\calB\) and any \(k\le l\):
	\begin{align}
	\Arrowvert[B^{k},H_{\partial\calB_{t}}]f\Arrowvert_{L^{2}(\partial\calB_{t})}\lesssim\sum_{i+j\le k-1, j\le 10}(\Arrowvert B^{i}u\Arrowvert_{H^{1}(\partial\calB_{t})}\Arrowvert \nabla B^{j}f\Arrowvert_{L^{\infty}(\partial\calB_{t})}+\Arrowvert B^{j}u\Arrowvert_{W^{2,\infty}(\partial\calB_{t})}\Arrowvert B^{i}f\Arrowvert_{L^{2}(\partial\calB_{t})}).
	\end{align}
\end{proposition}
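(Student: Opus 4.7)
The plan is to realize the commutator by working in Lagrangian coordinates, where the material derivative $B$ becomes $\partial_t$ acting on the parameter domain. Using the parametrization $\xi:\bbR\times S_R\to\partial\calB$ and the identity $n\,dS = N\,dp'$ with $N=\xi_\alpha\times\xi_\beta$, I write
\begin{align*}
(H_{\partial\calB_t}f)\circ\xi(t,p)=\text{p.v.}\int_{S_R}K(\xi(t,p')-\xi(t,p))\,N(t,p')\,f(\xi(t,p'))\,dp'.
\end{align*}
Differentiating in $t$ (which is the same as applying $B$ after composition with $\xi$) and subtracting $H_{\partial\calB_t}(Bf)$, the $Bf$ term cancels and we obtain
\begin{align*}
[B,H_{\partial\calB_t}]f\circ\xi=\text{p.v.}\int\nabla K(\xi'-\xi)\cdot(u(\xi')-u(\xi))\,N\,f\,dp' + \text{p.v.}\int K(\xi'-\xi)\,\partial_t N\,f\,dp',
\end{align*}
where $\partial_t N=u_\alpha\times\xi_\beta+\xi_\alpha\times u_\beta$ contains one derivative of $u$. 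The first integral is a Calderon commutator: the factor $u(\xi')-u(\xi)$ gains one power of $|\xi'-\xi|$, cancelling the extra singularity of $\nabla K$, and standard boundedness gives an $L^2$ estimate controlled by $\|\nabla u\|_{L^\infty}\|f\|_{L^2}$ (or, dually, by $\|u\|_{H^1}\|\nabla f\|_{L^\infty}$). The second integral is a Hilbert-transform-type operator with multiplier $\partial_t N$, bounded on $L^2$ by $\|u\|_{W^{1,\infty}}\|f\|_{L^2}$. This establishes the case $k=1$ of the claim.

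For general $k$, I proceed by induction, writing
\begin{align*}
[B^k,H_{\partial\calB_t}]=\sum_{\ell=0}^{k-1}B^{\ell}\,[B,H_{\partial\calB_t}]\,B^{k-1-\ell},
\end{align*}
and then repeatedly applying $B$ under the integral in the $k=1$ formula. At each application, $B$ lands either on the kernel (producing another $u(\xi')-u(\xi)$ factor that compensates an extra derivative of $K$, via the mean-value identity), on the accumulated Jacobian/normal factors (producing a $B^j u$ and its tangential derivatives), or on $f$. The outcome is a finite sum of multilinear singular integrals of either Calderon or generalized Hilbert type whose symbols are products of $B^{i_\ell}u$ and a single $B^j f$ with total order $i_1+\cdots+i_m+j\le k-1$, which the induction hypothesis and the bootstrap regularity of $u$ justify is bounded on $L^2$.

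The final $L^2(\partial\calB_t)$ estimate is obtained by Hölder on each multilinear piece: I place the single highest-order factor (either a $B^i f$ or a $B^i u$) in $L^2$ and all remaining lower-order factors of $B^j u$ in $L^\infty$ (or $W^{1,\infty}$, $W^{2,\infty}$), which by Sobolev embedding on the two-dimensional surface $\partial\calB_t$ is controlled provided $j$ stays below a fixed threshold — taking $j\le 10$ gives ample room. The Calderon-commutator family gives the first summand $\|B^i u\|_{H^1}\|\nabla B^j f\|_{L^\infty}$ (with $\nabla B^j f$ arising from the mean-value factor $u-u'$ being traded for $\nabla f$ after integration by parts in one case, or from the extra tangential derivative on $f$ needed to recover a true Calderon kernel), while the normal/surface-measure commutators give the second summand $\|B^j u\|_{W^{2,\infty}}\|B^i f\|_{L^2}$.

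The main obstacle will be bookkeeping the multilinear expansion cleanly enough that each resulting term is indeed of one of the two advertised shapes, and in particular verifying that the higher commutators remain of Calderon type — i.e.\ that every extra $B$ landing on the kernel is compensated by a difference $u(\xi')-u(\xi)$ or by a boundary Jacobian factor producing only derivatives of $u$ and $\xi$, never an uncompensated extra power of $|\xi'-\xi|^{-1}$. Once this structural claim is in place, boundedness of each resulting multilinear singular integral is a standard consequence of Coifman--McIntosh--Meyer-type commutator theorems and the bootstrap assumption \eqref{5.4} on the geometry of $\partial\calB_t$.
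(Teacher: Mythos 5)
Your plan follows essentially the same route as the paper: parametrize $\partial\calB_t$ by Lagrangian coordinates so that $B$ becomes $\partial_t$, differentiate the kernel and the surface element under the integral, telescope $[B^k,H]=\sum B^{\ell}[B,H]B^{k-1-\ell}$, and reduce each resulting term to a multilinear singular integral whose symbol is a product of $B^{i}u$'s and one $B^{j}f$. The paper does exactly this, except that it first uses the algebraic identity (4.23) together with an integration by parts to rewrite $[\partial_t,H_{\Sigma}]f=\int K(\xi'-\xi)(\xi_t-\xi'_t)Q(\xi',f')\,dS'$, i.e.\ with the kernel $K$ itself (order $|\xi'-\xi|^{-2}$) multiplied by the difference $\xi_t-\xi'_t$ and a tangential derivative of $f$, so that every term of the higher-order expansion (4.21) falls verbatim under the two estimates (4.32)--(4.33) of Wu's singular-integral lemma (Lemma 4.7). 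You instead keep the kernel as $\nabla K\cdot(u'-u)$ and appeal to Coifman--McIntosh--Meyer directly; that is an equivalent normalization.

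There are, however, two load-bearing steps that your proposal asserts rather than proves, and they are precisely where the paper's proof lives. First, the ``dual'' bound you mention parenthetically --- controlling the commutator by $\|B^{i}u\|_{H^{1}}\|\nabla B^{j}f\|_{L^{\infty}}$ with $i$ as large as $k-1$ --- is \emph{not} obtained by H\"older: when $k-1$ material derivatives land on the velocity difference, the factor $B^{k-1}u'-B^{k-1}u$ cannot be placed in $L^{\infty}$ (it is only controlled in $H^{1}(\partial\calB_t)$ under \eqref{5.4}), so the generic CMM bound $\|\nabla A\|_{L^{\infty}}\|f\|_{L^{2}}$ is useless there. One needs the second estimate (4.33) of Lemma 4.7, which puts the Lipschitz symbol in $L^{2}$-based norms at the price of putting $f$ in $L^{\infty}$; this is a genuinely different singular-integral estimate and must be cited or proved. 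Second, in the paper's normalization the term where all derivatives fall on $f$ reads $\int K(\xi'-\xi)(\xi_t-\xi'_t)Q(\xi',\partial_t^{k-1}f')\,dS'$, which carries one tangential derivative too many on $f$; removing it requires the antisymmetric integration-by-parts identity $\int Q(f,g)h\,dS=-\int fQ(h,g)\,dS$ (Lemma 4.5), carried out componentwise in (4.39). In your normalization this issue reappears as the need for full Calder\'on-commutator boundedness of the kernel $\nabla K\cdot(u'-u)$ acting on $B^{k-1}f\in L^{2}$. Neither point is fatal --- both are standard in the water-waves literature --- but as written your argument stops exactly at the step that constitutes the proof.
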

Before stating the proof the Proposition \(\eqref{4.4.}\) we record some important Lemmas. For any two Clifford algebra-valued functions \(f\) and \(g\) define
\begin{align}
Q(f,g):=\frac{1}{|N|}\left( f_{\alpha}g_{\beta}-f_{\beta}g_{\alpha}\right), 
\end{align}
here \(|N|=|\xi_{\alpha}\times\xi_{\beta}|\) which makes \(Q(f,g)\) coordinate-invariant. If \(f\) and \(g\) are vector-valued we also define 
\begin{align*}
\vec{Q}(f,g):=\frac{1}{|N|}(f_{\alpha}\times g_{\beta}-f_{\beta}\times g_{\alpha}).
\end{align*}
\begin{lemma}\label{4.5.}
	If \(f,g\) and \(h\) are scalar-valued then 
	\begin{align}\label{4.18}
	\int_{\partial\calB(t)}Q(f,g)hdS=-\int_{\partial\calB(t)}fQ(h,g)dS.
	\end{align}
\end{lemma}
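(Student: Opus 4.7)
The plan is to recognize $Q(f,g)\,dS$ as the coordinate expression of the 2-form $df\wedge dg$ pulled back to $\partial\calB(t)$, and then to derive \eqref{4.18} by a divergence-form identity combined with the fact that $\partial\calB(t)$ is a closed surface.

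First I would fix local (orientation-preserving) coordinates $(\alpha,\beta)$ on the parameter domain $S_R$. Using $N=\xi_\alpha\times\xi_\beta$ and $dS=|N|\,d\alpha\,d\beta$, the definition of $Q$ gives immediately
\begin{equation*}
Q(f,g)\,dS=(f_\alpha g_\beta-f_\beta g_\alpha)\,d\alpha\,d\beta,
\end{equation*}
so the integrand is coordinate-invariant (it is the pullback of $df\wedge dg$). The point of this step is that the identity we are trying to prove is intrinsic, so it suffices to verify it after introducing a partition of unity subordinate to finitely many coordinate charts covering $\partial\calB(t)$.

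Second, the key algebraic step is the Leibniz-type identity
\begin{equation*}
h(f_\alpha g_\beta-f_\beta g_\alpha)+f(h_\alpha g_\beta-h_\beta g_\alpha)
=(hf)_\alpha g_\beta-(hf)_\beta g_\alpha
=\partial_\alpha(hf\,g_\beta)-\partial_\beta(hf\,g_\alpha),
\end{equation*}
where the second equality uses the commutativity of partial derivatives ($g_{\alpha\beta}=g_{\beta\alpha}$). Thus the sum of the two integrands in \eqref{4.18} is, in every chart, the two-dimensional divergence of a compactly supported vector field (after multiplying by a partition-of-unity cutoff).

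Finally, since $\partial\calB(t)$ is the boundary of the compact domain $\calB(t)\subset\bbR^3$, it is a closed $C^2$ surface (compact without boundary). Summing the divergence identity over the charts of the partition of unity, all interior boundary contributions cancel in pairs, leaving
\begin{equation*}
\int_{\partial\calB(t)} h\,Q(f,g)\,dS+\int_{\partial\calB(t)} f\,Q(h,g)\,dS=0,
\end{equation*}
which is exactly \eqref{4.18}. Equivalently, this step is Stokes' theorem applied to the 1-form $hf\,dg$ on the closed surface $\partial\calB(t)$: $d(hf\,dg)=(h\,df+f\,dh)\wedge dg$, and $\int_{\partial\calB(t)} d(hf\,dg)=0$. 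There is no real analytic obstacle; the only bookkeeping step that requires care is the partition-of-unity argument, which ensures that the divergence-form identity can be added across charts despite $(\alpha,\beta)$ being only local coordinates.
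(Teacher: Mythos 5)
Your proof is correct and follows essentially the same route as the paper: both identify $Q(f,g)\,dS$ with the $2$-form $df\wedge dg$ and then apply Stokes' theorem to $d(fh\,dg)=(h\,df+f\,dh)\wedge dg$ on the closed surface $\partial\calB(t)$. The partition-of-unity/divergence-form bookkeeping you add is just an explicit unpacking of that same Stokes argument.
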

\begin{proof}
	In the scalar case we have
	\begin{align}
	Q(f,g)dS=(f_{\alpha}g_{\beta}-f_{\beta}g_{\alpha})d\alpha\wedge d\beta=df\wedge dg,
	\end{align}
	where \(d\) denotes the exterior differentiation operator on \(\partial\calB(t)\). The identity \(\eqref{4.18}\) follows by Stokes' Theorem
	\begin{align}
	\begin{split}
	0=\int_{\partial\calB(t)}d(fhdg)&=\int_{\partial\calB(t)}hdf\wedge dg+\int_{\partial\calB(t)}fdh\wedge dg\\
	&=\int_{\partial\calB(t)}Q(f,g)hdS+\int_{\partial\calB(t)}Q(f,g)hdS.
	\end{split}
	\end{align}
\end{proof}
\begin{lemma}
	Let \(f\) be a Clifford algebra-valued function. Then
	\begin{align}\label{4.21}
	[\partial_{t}^{k},H_{\Sigma}]f\sim\sum_{i+j+p+l\le k-1}\int_{\Sigma}\partial_{t}^{i}K(\xi'-\xi)\partial_{t}^{j}(\xi_{t}-\xi'_{t})Q\left( \partial_{t}^{p}\xi',\partial_{t}^{l}f'\right)dS'. 
	\end{align}
Here ``$\sim$" means we drop the numeric constant coefficients in front of the integrals in the sum.
\end{lemma}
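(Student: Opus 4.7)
The plan is to prove this commutator identity by induction on $k \ge 1$, concentrating the analytic content in the base case $k = 1$ and treating the inductive step as a systematic distribution of $\partial_t$ across the factors already present.

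For the base case $k=1$, I would differentiate $H_{\Sigma}f(\xi) = \mathrm{p.v.}\int_{\Sigma} K(\xi'-\xi)\,n(\xi')\,f(\xi')\,dS(\xi')$ under the integral sign in Lagrangian coordinates, which renders the domain of integration time-independent. The operator $\partial_{t}$ then distributes onto three factors: the Cauchy kernel $K(\xi'-\xi)$, the oriented surface element $n(\xi')\,dS(\xi') = (\xi'_{\alpha}\times\xi'_{\beta})\,d\alpha\,d\beta$, and the density $f(\xi')$. The third contribution is exactly $H_{\Sigma}(\partial_{t}f)(\xi)$, which cancels against the second half of the commutator. The contribution from $\partial_{t}K(\xi'-\xi) = (\xi'_{t}-\xi_{t})\cdot\nabla K(\xi'-\xi)$ carries a $\nabla K$ factor, which I would convert into $K(\xi'-\xi)(\xi_{t}-\xi'_{t})$ times a $Q$-type surface factor by using Clifford-analyticity $\calD K = 0$ away from the singularity combined with a Stokes-type integration by parts on $\Sigma$ extending Lemma~\ref{4.5.} to Clifford-algebra-valued integrands. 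The contribution from differentiating $n(\xi')\,dS(\xi')$ directly produces a $\vec{Q}(\xi'_{t},\xi')$-type factor, since $\partial_{t}(\xi'_{\alpha}\times\xi'_{\beta}) = \xi'_{t\alpha}\times\xi'_{\beta}+\xi'_{\alpha}\times\xi'_{t\beta}$. Combining the two and passing from $\vec{Q}$ to $Q$ via the quaternionic product structure of $\calC$ yields a single integral of the form $\int K(\xi'-\xi)(\xi_{t}-\xi'_{t})Q(\xi',f')\,dS'$, which is exactly the claimed identity at $k=1$ (where $i+j+p+l\le 0$ forces $i=j=p=l=0$).

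For the inductive step I would write $[\partial_{t}^{k+1},H_{\Sigma}]f = \partial_{t}[\partial_{t}^{k},H_{\Sigma}]f + [\partial_{t},H_{\Sigma}]\partial_{t}^{k}f$, apply the inductive hypothesis to the first summand and the base case to the second. The additional $\partial_{t}$ in the first summand distributes across the four time-differentiated factors in the inductive formula: each absorption raises the corresponding exponent by one while keeping the total index sum bounded by $k$, which is precisely the constraint required at level $k+1$. A key structural observation that makes the bookkeeping close is that $Q(f,g)\,dS = df \wedge dg$ (as seen in the proof of Lemma~\ref{4.5.}) carries no explicit $|N|$ weight, so no stray $\partial_{t}\,dS'$ term appears outside of the $Q$-factor; the algebraic shape of each summand is preserved verbatim under each additional time differentiation.

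The main obstacle is the Clifford-analytic manipulation in the base case that converts $\nabla K$-integrands into $K$-integrands times $Q$-factors. This is what ensures that the kernel in the final formula remains $\partial_{t}^{i}K$ rather than $\partial_{t}^{i}\nabla K$, and it relies essentially on the special algebraic structure of the Cauchy kernel together with Stokes' theorem on $\Sigma$. Once this identity is in place, the induction is a purely combinatorial exercise, and because the claim only asserts equality up to numerical constants (as indicated by the symbol ``$\sim$''), I do not need to track signs or coefficients.
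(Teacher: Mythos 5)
Your proposal is correct and follows essentially the same route as the paper: the paper also establishes the $k=1$ identity by differentiating under the integral, converting the $\partial_t K=(\xi'_t-\xi_t)\cdot\nabla K$ contribution into tangential derivatives via the pointwise kernel identity \(\eqref{4.23}\) (the algebraic expression of the Clifford-analyticity of $K$ you invoke), integrating by parts so the terms from the differentiated normal cancel, and then obtains the general case by telescoping $[\partial_t^{k},H_{\Sigma}]=\sum_{m}\partial_t^{m}[\partial_t,H_{\Sigma}]\partial_t^{k-1-m}$ and distributing the remaining $\partial_t$'s by Leibniz, exactly as in your inductive step. Your observation that $Q(f,g)\,dS=df\wedge dg$ carries no $|N|$ weight is the right reason the form of each summand is preserved under further time differentiation.
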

\begin{proof}
	We first prove the  following commutator formulas for the Hilbert transform
	\begin{align}\label{4.22}
	[\partial_{t},H_{\Sigma}]f=\int_{\Sigma}K(\xi'-\xi)(\xi_{t}-\xi'_{t})Q\left( \xi',f'\right)dS'. 
	\end{align}
	Suppose \(\eta\in \bbR^{3}\) are arbitrary vectors and \(\xi\neq\xi'\), and let \(K=K(\xi'-\xi)\). Then in local coordinates \((\alpha,\beta)\) on \(\Sigma\), we have
	\begin{align}\label{4.23}
	-(\eta\cdot\nabla)K(\xi'_{\alpha'}\times\xi'_{\beta'})+(\xi'_{\alpha'}\cdot\nabla)K(\eta\times\xi'_{\beta'})+(\xi'_{\beta'}\cdot\nabla)K(\xi'_{\alpha'}\times\eta)=0.	
	\end{align}
	Now let’s prove \(\eqref{4.22}\). By definition, we have
	\begin{align}\label{4.24}
	\begin{split}
	[\partial_{t},H_{\Sigma}]f&=\partial_{t}(H_{\Sigma}f)-H_{\Sigma}(\partial_{t}f)\\
	&=\int\int\partial_{t}\left(K(\xi'-\xi)(\xi'_{\alpha'}\times\xi'_{\beta'}) \right) (f'-f)d\alpha'd\beta'\\
	&=\int\int\partial_{t}\left(K(\xi'-\xi)\right) (\xi'_{\alpha'}\times\xi'_{\beta'}) (f'-f)d\alpha'd\beta'\\
	&+\int\int K(\xi'-\xi)(\xi'_{t\alpha'}\times\xi'_{\beta'}+\xi'_{\alpha'}\times\xi'_{t\beta'})(f'-f)d\alpha'd\beta'
	\end{split}
	\end{align}
	Notice that
	\begin{align}
	\partial_{t}(K(\xi'-\xi))=((\xi'_{t}-\xi_{t})\cdot\nabla)K(\xi'-\xi),\quad \partial_{\alpha'}K(\xi'-\xi)=(\xi'_{\alpha'}\cdot\nabla)K(\xi'-\xi)
	\end{align}
	and
	\begin{align}
	\partial_{\beta'}K(\xi'-\xi)=(\xi'_{\beta'}\cdot\nabla)K(\xi'-\xi).
	\end{align}
	In \(\eqref{4.23}\) we take \(\eta=\xi'_{t}-\xi_{t}\) and apply to \(\eqref{4.24}\). We get
	\begin{align}\label{4.27}
	\begin{split}
	[\partial_{t},H_{\Sigma}]f&=\int\int{\partial_{\alpha'}K((\xi'_{t}-\xi_{t})\times\xi'_{\beta'})+\partial_{\beta'}K(\xi'_{\alpha'}\times(\xi'_{t}-\xi_{t}))}(f'-f)d\alpha'd\beta'\\
	&+\int\int K(\xi'-\xi)(\xi'_{t\alpha'}\times\xi'_{\beta'}+\xi'_{\alpha'}\times\xi'_{t\beta'})(f'-f)d\alpha'd\beta'.
	\end{split}
	\end{align}
	Applying integration by parts to the first term on the right hand side of \(\eqref{4.27}\), we obtain
	\begin{align}
	\begin{split}
	[\partial_{t},H_{\Sigma}]f&=-\int\int K(\xi'-\xi)\left((\xi'_{t}-\xi_{t})\times\xi'_{\beta'}f'_{\alpha'}+\xi'_{\alpha'}\times(\xi'_{t}-\xi_{t})f'_{\beta'} \right) d\alpha'd\beta'\\
	&-\int\int K(\xi'-\xi)(\xi'_{t\alpha'}\times\xi'_{\beta'}+\xi'_{\alpha'}\times\xi'_{t\beta'})(f'-f)d\alpha'd\beta'\\
	&+\int\int K(\xi'-\xi)(\xi'_{t\alpha'}\times\xi'_{\beta'}+\xi'_{\alpha'}\times\xi'_{t\beta'})(f'-f)d\alpha'd\beta'\\
	&=\int\int K(\xi'-\xi) ((\xi'_{t}-\xi_{t})\times(\xi'_{\beta'}f'_{\alpha'}-\xi'_{\alpha'}f'_{\beta'}))d\alpha'd\beta'\\
	&=\int_{\Sigma}K(\xi'-\xi)(\xi_{t}-\xi'_{t})Q\left( \xi',f'\right)dS'.
	\end{split}
	\end{align}
	Finally the estimate \([\partial_{t}^{k},H_{\Sigma}]f\) follows by writing
	\begin{align}
	[\partial_{t}^{k},H_{\Sigma}]f=[\partial_{t},H_{\Sigma}]\partial_{t}^{k-1}f+\partial_{t}[\partial_{t},H_{\Sigma}]\partial_{t}^{k-2}f+\dots+\partial_{t}^{k-1}[\partial_{t},H_{\Sigma}]f.
	\end{align}
\end{proof}
We next turn to estimates on singular
integral operators. Let \(J:S_{R}\to \bbR_{k}\), \(F:\bbR^{k}\to \bbR\), \(A:S_{R}\to \bbR\) be smooth functions. We want to estimate
singular integrals of the following forms:
\begin{align}\label{4.30}
C_{1}f(p):=p.v.\int_{S_{R}}F\left(\frac{J(p)-J(q)}{|p-q|} \right) \frac{\Pi^{N}_{i=1}(A_{i}(p)-A_{i}(q))}{|p-q|^{N+2}}f(q)dS(q),
\end{align}
where \(dS\) denotes the surface measure on \(S_{R}\), and where we assume that the
kernel
\begin{align}
k_{1}(p,q)=F\left(\frac{J(p)-J(q)}{|p-q|} \right) \frac{\Pi^{N}_{i=1}(A_{i}(p)-A_{i}(q))}{|p-q|^{N+2}}
\end{align}
is odd, that is, \(k_{1}(p,q)=-k_{1}(q,p)\).
\begin{lemma}\label{4.7.}
	\cite{wu2011global}With the same notation as \(\eqref{4.30}\), we have
	\begin{align}\label{4.32}
	\Arrowvert C_{1}f\Arrowvert_{L^{2}(S_{R})}\le C \prod_{i=1}^{N}\left( \Arrowvert\nabla\mkern-10.5mu\slash A_{i}        \Arrowvert_{L^{\infty}(S_{R})}+R^{-1}\Arrowvert A_{i}        \Arrowvert_{L^{\infty}(S_{R})}\right)\Arrowvert f\Arrowvert_{L^{2}(S_{R})},
	\end{align}
	and
	\begin{align}\label{4.33}
	\begin{split}
	\Arrowvert C_{1}f\Arrowvert_{L^{2}(S_{R})}\le&C\left(\Arrowvert\nabla\mkern-10.5mu\slash A_{1}        \Arrowvert_{L^{2}(S_{R})}+R^{-1}\Arrowvert A_{1}        \Arrowvert_{L^{2}(S_{R})} \right) \\
	&\times\prod_{i=2}^{N}\left(\Arrowvert\nabla\mkern-10.5mu\slash A_{i}        \Arrowvert_{L^{\infty}(S_{R})}+R^{-1}\Arrowvert A_{i}        \Arrowvert_{L^{\infty}(S_{R})} \right) \Arrowvert f\Arrowvert_{L^{\infty}(S_{R})},
	\end{split}
	\end{align}
	where \(\nabla\mkern-10.5mu\slash\) denotes the covariant differentiation operator with respect
	to the standard metric on \(S_{R}\) and the constants depend on \(F\), \(\Arrowvert\nabla\mkern-10.5mu\slash J\Arrowvert_{L^{\infty}}\). 
\end{lemma}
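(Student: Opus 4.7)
The plan is to reduce the two bounds of Lemma~\ref{4.7.} to the classical Coifman--McIntosh--Meyer theory of Calder\'on commutators, adapted to the round sphere $S_{R}$, which is precisely the route carried out in \cite{wu2011global}. The kernel
\[
k_{1}(p,q) = F\!\left(\frac{J(p)-J(q)}{|p-q|}\right)\frac{\prod_{i=1}^{N}(A_{i}(p)-A_{i}(q))}{|p-q|^{N+2}}
\]
has the critical singularity $|p-q|^{-2}$ on the two-dimensional surface $S_{R}$, and the standing assumption that $k_{1}$ is odd makes the principal value well defined. The outer factor $F\!\left((J(p)-J(q))/|p-q|\right)$ is uniformly bounded together with its derivatives, since its argument takes values in a compact subset of $\mathbb{R}^{k}$ once $\Arrowvert\nabla\mkern-10.5mu\slash J\Arrowvert_{L^{\infty}}$ is controlled; it therefore contributes only multiplicative constants depending on $F$ and $\Arrowvert\nabla\mkern-10.5mu\slash J\Arrowvert_{L^{\infty}}$.

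First I would localize: cover $S_{R}$ by finitely many coordinate patches of diameter $\sim R$, bi-Lipschitz equivalent to balls in $\mathbb{R}^{2}$ with constants independent of $R$, and split the integrand into a near piece $|p-q|\lesssim R$ and a far piece $|p-q|\gtrsim R$. On the far piece the crude bound $|A_{i}(p)-A_{i}(q)|\le 2\Arrowvert A_{i}\Arrowvert_{L^{\infty}}$ combined with $\int_{|p-q|\gtrsim R}|p-q|^{-2}\,dS(q)\lesssim 1$ produces a contribution controlled by $\prod_{i}R^{-1}\Arrowvert A_{i}\Arrowvert_{L^{\infty}}\cdot\Arrowvert f\Arrowvert_{L^{2}}$, accounting for the second summand in each factor. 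On the near piece I would write each difference as a line integral
\[
A_{i}(p)-A_{i}(q) = \int_{0}^{1}\nabla\mkern-10.5mu\slash A_{i}(\gamma_{pq}(s))\cdot\dot\gamma_{pq}(s)\,ds
\]
along a geodesic $\gamma_{pq}$ of $S_{R}$, obtaining $|A_{i}(p)-A_{i}(q)|\lesssim\Arrowvert\nabla\mkern-10.5mu\slash A_{i}\Arrowvert_{L^{\infty}}|p-q|$, which reduces the operator to a finite sum of generalized Calder\'on commutators of the form
\[
T_{\vec A}f(p) = \mathrm{p.v.}\!\int \frac{M\!\left(p,q;\,\bigl\{(A_{i}(p)-A_{i}(q))/|p-q|\bigr\}_{i=1}^{N}\right)}{|p-q|^{2}}\,f(q)\,dS(q),
\]
whose symbol $M$ is smooth, bounded, and odd in $p-q$.

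Invoking the Coifman--McIntosh--Meyer $L^{2}$-boundedness theorem for Calder\'on commutators (transported to the sphere via the bi-Lipschitz charts) gives $\Arrowvert T_{\vec A}f\Arrowvert_{L^{2}(S_{R})}\lesssim\prod_{i=1}^{N}\Arrowvert\nabla\mkern-10.5mu\slash A_{i}\Arrowvert_{L^{\infty}(S_{R})}\,\Arrowvert f\Arrowvert_{L^{2}(S_{R})}$; combining the near and far pieces yields \eqref{4.32}. To prove the mixed estimate \eqref{4.33} I would exploit multilinearity: single out the slot associated to $A_{1}$, absorb $A_{2},\dots,A_{N}$ into $L^{\infty}$-bounded symbol factors, and rewrite
\[
C_{1}f(p) = \int K_{0}(p,q)\,\bigl(A_{1}(p)-A_{1}(q)\bigr)\,f(q)\,dS(q)
\]
with $K_{0}$ a Calder\'on--Zygmund kernel whose constants are bounded by $\prod_{i\ge 2}\!\bigl(\Arrowvert\nabla\mkern-10.5mu\slash A_{i}\Arrowvert_{L^{\infty}}+R^{-1}\Arrowvert A_{i}\Arrowvert_{L^{\infty}}\bigr)$. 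Pulling $f$ out in $L^{\infty}$, pairing with a test function $g\in L^{2}(S_{R})$, and applying Fubini places the role of $f$ onto $g$ in a dual Calder\'on-commutator estimate in which $A_{1}$ appears in the ``free'' slot; the relevant $L^{2}$ bound now produces the factor $\Arrowvert\nabla\mkern-10.5mu\slash A_{1}\Arrowvert_{L^{2}}+R^{-1}\Arrowvert A_{1}\Arrowvert_{L^{2}}$ rather than its $L^{\infty}$ counterpart, which is \eqref{4.33}.

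The main obstacle I expect is keeping all constants uniform in $R$ and in the geometry of $S_{R}$: the Calder\'on--Zygmund constants of the flat model operator on $\mathbb{R}^{2}$ must be shown independent of $R$, the geodesic-distance reduction must be compared with Euclidean distance up to admissible errors, and oddness of $k_{1}$ must survive the flattening step so that the principal value is preserved. The oddness hypothesis is indispensable: without it, the model operator would have a logarithmically divergent $A_{i}\equiv 1$ piece that no amount of difference-taking could tame. Since the statement is quoted verbatim from \cite{wu2011global}, where this analysis is done in full, the role of the present sketch is to indicate why the stated bound is the natural output of the commutator machinery rather than to reprove it from scratch.
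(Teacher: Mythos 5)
The paper offers no proof of this lemma: it is quoted verbatim from \cite{wu2011global} (and ultimately rests on the Coifman--McIntosh--Meyer theory of multilinear Calder\'on commutators together with Wu's adaptation to spheres of radius \(R\)), so there is no in-paper argument to compare yours against. Your sketch correctly identifies the machinery that proves it: the near/far splitting at scale \(R\) to produce the \(R^{-1}\Arrowvert A_{i}\Arrowvert\) terms, the flattening by bi-Lipschitz charts with constants uniform in \(R\), the reduction of the \(|p-q|^{-2}\)-singular near piece to Calder\'on commutators whose \(L^{2}\) bounds carry the factors \(\Arrowvert\nabla\mkern-10.5mu\slash A_{i}\Arrowvert_{L^{\infty}}\), and the multilinear/duality reshuffling that moves one slot from \(L^{\infty}\) to \(L^{2}\) for \eqref{4.33}. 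Two steps are stated more loosely than they can be executed: the pointwise bound \(|A_{i}(p)-A_{i}(q)|\lesssim\Arrowvert\nabla\mkern-10.5mu\slash A_{i}\Arrowvert_{L^{\infty}}|p-q|\) by itself yields a non-integrable \(|p-q|^{-2}\) kernel and proves nothing about \(L^{2}\) boundedness, so the entire weight of \eqref{4.32} rests on the CMM theorem you invoke (in practice one expands \(F\) in a power series in its arguments and sums the resulting commutators with geometric control on the constants --- this is where the dependence on \(F\) and \(\Arrowvert\nabla\mkern-10.5mu\slash J\Arrowvert_{L^{\infty}}\) enters and it deserves mention); and the passage to \eqref{4.33} is not a bare Fubini argument but the known variant of the multilinear estimate in which the operator is viewed as acting on the distinguished function \(A_{1}\) with \(f\) frozen in \(L^{\infty}\). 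As a roadmap to the proof in \cite{wu2011global} your proposal is sound; as a self-contained proof it defers the essential analytic content to the cited commutator theorems, which is consistent with how the present paper itself treats the lemma.
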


\begin{proof}[Proof of Proposition \(\eqref{4.4.}\)]
	We consider \(\Arrowvert	[\partial_{t}^{k},H_{\partial\calB_{t}}]f\Arrowvert_{L^{2}(S_{R})}\) in three limiting cases. First we notice the components of the term
	\begin{align}\label{4.34}
	\int_{\partial\calB_{t}}\partial_{t}^{k-1}K(\xi'-\xi)(\xi_{t}-\xi'_{t})Q\left(\xi',f' \right) dS'
	\end{align}
	satisfy condition of Lemma \(\ref{4.7.}\) by the appropriate transformation. Therefore \(\eqref{4.34}\) can be regarded as lower order terms according to \(\eqref{4.32}\). For the components of the term 
	\begin{align}\label{4.35}
	\int_{\partial\calB_{t}}K(\xi'-\xi)\partial_{t}^{k-1}(\xi_{t}-\xi'_{t})Q\left(\xi',f' \right) dS',
	\end{align}
	we use  Lemma \(\ref{4.7.}\) to bound the \(L^{2}(S_{R})\) norms of \(\eqref{4.34}\) by the right-hand side of \(\eqref{4.33}\). So
	\begin{align}\label{4.36}
	\left| \left| \int_{\partial\calB_{t}}K(\xi'-\xi)\partial_{t}^{k-1}(\xi_{t}-\xi'_{t})Q\left(\xi',f' \right) dS'\right| \right| _{L^{2}(S_{R})}\lesssim(\Arrowvert \nabla\mkern-10.5mu\slash \partial_{t}^{k-1}u\Arrowvert_{L^{2}(S_{R})}+R^{-1}\Arrowvert  \partial_{t}^{k-1}u\Arrowvert_{L^{2}(S_{R})})\Arrowvert \nabla\mkern-10.5mu\slash f\Arrowvert_{L^{\infty}(S_{R})}.
	\end{align} 
	For the term
	\begin{align}\label{4.37}
	\int_{\partial\calB_{t}}K(\xi'-\xi)(\xi_{t}-\xi'_{t})Q\left(\xi',\partial_{t}^{k-1}f' \right) dS',
	\end{align}
	We rewrite the term \(\eqref{4.37}\) as 
	\begin{align}
	\begin{split}
	\int_{\partial\calB_{t}}K^{i}(\xi'-\xi)(\xi^{j}_{t}-(\xi'_{t})^{j})Q\left((\xi')^{p},\partial_{t}^{k-1}f' \right) dS'{ e_{i}e_{j}e_{p}}:=A_{ijp}{ e_{i}e_{j}e_{p}},
	\end{split}
	\end{align}
	suppose \(f\) is scalar function. For any \(i,j,p=1,2,3\), let \((\xi')^{i}-\xi^{i}=(\zeta')^{i}\), \(\xi^{j}_{t}-(\xi'_{t})^{j}=(\eta')^{j}\), \(\partial_{t}^{k-1}f'=\tilde{f'}\)
	and using integration-by-parts formula of Lemma \(\ref{4.5.}\) in the componentwise, we get
	\begin{align}\label{4.39}
	\begin{split}
	&A_{ijp}\sim\int_{\partial\calB_{t}}\frac{(\zeta')^{i}(\eta')^{j}}{|\zeta|^{3}}Q((\xi')^{p},\tilde{f'})\\
	=&\int_{\partial\calB_{t}}Q\left( (\xi')^{p},\frac{(\zeta')^{i}(\eta')^{j}}{|\zeta|^{3}}\right)\tilde{f'}dS' \\
	=&\int_{\partial\calB_{t}}\frac{(\zeta')^{i}}{|\zeta|^{3}}\tilde{f'}Q((\xi')^{p},(\eta')^{j})dS'+\int_{\partial\calB_{t}}\frac{(\eta')^{j}}{|\zeta|^{3}}\tilde{f'}Q((\xi')^{p},(\zeta')^{i})dS'+\int_{\partial\calB_{t}}\frac{(\zeta')^{i}(\eta')^{j}\zeta}{|\zeta|^{5}}\cdot\tilde{f'}Q((\xi')^{p},\zeta)dS'\\
	\lesssim&(\Arrowvert \nabla\mkern-10.5mu\slash u\Arrowvert_{L^{\infty}(S_{R})}+R^{-1}\Arrowvert  u\Arrowvert_{L^{\infty}(S_{R})})\Arrowvert\partial_{t}^{k-1}f\Arrowvert_{L^{2}(S_{R})}.
	\end{split}
	\end{align}
	The last estimate follows from \(\eqref{4.32}\). All other cases can be considered combinations of \(\eqref{4.34}\), \(\eqref{4.35}\) and \(\eqref{4.37}\). Based on above discussion, combining \(\eqref{4.21}\), \(\eqref{4.36}\) and \(\eqref{4.39}\) , we finally obtain
	\begin{align}
	\begin{split}
    \Arrowvert	[\partial_{t}^{k},H_{\partial\calB_{t}}]f\Arrowvert_{L^{2}(S_{R})}
	\lesssim&\sum_{i+j\le k-1, j\le 10}(\Arrowvert\partial_{t}^{i}u\Arrowvert_{H^{1}(S_{R})}\Arrowvert \nabla\mkern-10.5mu\slash \partial_{t}^{j}f\Arrowvert_{L^{\infty}(S_{R})}+\Arrowvert\partial_{t}^{j}u\Arrowvert_{W^{2,\infty}(S_{R})}\Arrowvert \partial_{t}^{i}f\Arrowvert_{L^{2}(S_{R})})\\
	\lesssim&\sum_{i+j\le k-1, j\le 10}(\Arrowvert B^{i}u\Arrowvert_{H^{1}(\partial\calB_{t})}\Arrowvert \nabla B^{j}f\Arrowvert_{L^{\infty}(\partial\calB_{t})}+\Arrowvert B^{j}u\Arrowvert_{W^{2,\infty}(\partial\calB_{t})}\Arrowvert B^{i}f\Arrowvert_{L^{2}(\partial\calB_{t})}).
	\end{split}
	\end{align}
	which completes the proof of Proposition \(\eqref{4.4.}\).
	
\end{proof}
 \section{Priori estimate}\label{5}

  For any function \(\phi\) we define the energies
\begin{align}
\begin{split}
&E[\phi,t]:=\int_{\calB(t)}|\partial_{t,x}\phi|^{2}dx+\int_{\partial\calB(t)}\frac{1}{a}|B\phi|^{2}dS,\\
&\underline{E}[\phi,t]:=\int_{\calB(t)}|\partial_{t,x}\phi|^{2}dx.
\end{split}
\end{align}
Higher order energies are defined as
\begin{align}
E_{j}[\phi,t]=E[B^{j}\phi,t],\quad E_{\le k}[\phi,t]=\sum_{j=0}^{k}E_{j}[\phi,t],\quad\underline{E}_{j}[\phi,t]=\underline{E}[B^{j}\phi,t],\quad\underline{E}_{\le k}[\phi,t]=\sum_{j=0}^{k}\underline{E}_{j}[\phi,t].
\end{align}
To simplify notation we introduce the unified energy
\begin{align}\label{5.3}
\scE_{l}(t):=\sum_{2j+k\le l+2}\left( \Arrowvert B^{k}u\Arrowvert^{2}_{H^{j}\left( \calB_{t}\right) }+   \Arrowvert B^{k+1}\varepsilon\Arrowvert^{2}_{H^{j}\left( \calB_{t}\right) }   \right)+\underline{E}_{\le l+1}[\varepsilon,t]+E_{\le l}[u,t].
\end{align}
Our goal in this section is to prove the following a priori estimate.
 \begin{proposition}\label{5.1.}
	Suppose \(u,\varepsilon\) is a solution to \(\eqref{2.33}-\eqref{2.34}\) with
	\begin{align}\label{5.4}
	\scE_{l}(t)\le C_{l},\quad |\nabla^{(m)}X(\cdot,t)|\le C_{X},\quad|J(t)-1|\le \frac12,\quad\left| \tilde{J}(t)-1\right| \le \frac12,
	\end{align}
	for some constants \(C_{l},C_{X},C_{J},C_{\tilde{J}}>0\) and \(l\) sufficiently large satisfying \(0\le m\ll l\), where \(J(t)\) and \(\tilde{J}(t)\) are the Jacobian of the Lagrangian coordinate transformation from \(\calB(0)\) to \(\calB(t)\) and \(\partial\calB(0)\) to \(\partial\calB(t)\) respectively.
	Then we have 
	\begin{align}\label{5.5}
	\scE_{l}(t)\le C_{0}\scE_{l}(0)+\int_{0}^{t}C_{1}(\bar{\rho})\scE_{l}(s)+C_{2}(\bar{\rho})\calE_{l-1}(s)+C_{h}\scE_{l}^{\frac{3}{2}}(s)ds,
	\end{align}
	for some positive constants \(C_{0},C_{1}(\bar{\rho}), C_{2}(\bar{\rho}),C_{h}\) and \(t\in[0,T]\).	
\end{proposition}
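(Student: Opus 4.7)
The plan is to run the standard ``multiplier + commutator + elliptic recovery'' scheme on the commuted system \eqref{3.28}--\eqref{3.30}, carefully sorting every source term into one of three buckets: top--order linear, non--top--order linear, and genuinely nonlinear. First I would fix $k\le l$ and apply Lemma \ref{3.2} to the pair \eqref{3.28}, that is, the boundary equation and the wave equation for $B^{k}u$, with $\phi=B^{k}u^{i}$, $f=F_{k}-c_{s}^{2}\nabla_{i}B^{k+1}\varepsilon$, and $g=G_{k}$. Using the Taylor sign condition \eqref{Taylor sign} the left hand side of \eqref{energy} bounds $E_{k}[u,t]$, so the output is
\begin{align*}
E_{k}[u,t]\lesssim E_{k}[u,0]+\int_{0}^{t}\!\!\int_{\partial\calB}\tfrac{1}{a}(F_{k}-c_{s}^{2}\nabla_{i}B^{k+1}\varepsilon)(BB^{k}u)\,dS\,d\tau+\int_{0}^{t}\!\!\int_{\calB}G_{k}(BB^{k}u)\,dx\,d\tau+\text{(lower order)}.
\end{align*}
In parallel I would apply Lemma \ref{3.3} to $B^{k+1}\varepsilon$ using equation \eqref{3.30}, which is legal because $B\varepsilon=0$ on $\partial\calB$. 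Both estimates produce spacetime integrals of the sources $F_{k},G_{k},H_{k}$ against $BB^{k}u$ or $BB^{k+1}\varepsilon$.

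Next I would classify these source terms according to Lemmas \ref{3.7}--\ref{3.9}. The dangerous top--order contribution is the one in which every derivative lands on a single factor, producing a term of schematic form $C_{1}(\bar\rho)\cdot\nabla B^{k}u$ or $C_{1}(\bar\rho)\cdot\nabla B^{k+1}\varepsilon$, with coefficient involving $(c_{s}^{2}+c_{s}c'_{s})\nabla\bar{\bm{\uprho}}$; Cauchy--Schwarz in $x$ and absorbing one $\partial_{t,x}$ factor into the energy gives the $C_{1}(\bar\rho)\scE_{l}(s)$ integral in \eqref{5.5}. All remaining linear contributions carry at least one factor of $\bar\rho$--dependent coefficient multiplying a term of order strictly less than $l$ after counting derivatives; by Sobolev interpolation these are bounded by $C_{2}(\bar\rho)\calE_{l-1}(s)$. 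The genuinely nonlinear terms contain at least three factors with $B$-- or $\partial$--derivatives on perturbations, so Sobolev embedding (using the hypothesis $\scE_{l}(t)\le C_{l}$ to place the lowest order factors in $L^{\infty}$) turns them into $C_{h}\scE_{l}^{3/2}(s)$.

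The two technical obstacles are the gravitational source and the recovery of spatial Sobolev norms. For the gravitational terms $B^{k+1}\nabla\psi$ on $\partial\calB$ appearing in $F_{k}$, and $B^{k}\Delta\psi, B^{k+1}\nabla\nabla\psi$ in the interior, I would follow Section \ref{4}: apply Lemma \ref{4.2} with $\Phi$ chosen so that $\Delta\Phi=4\pi\rho\chi_{\calB}$ and $\Phi=0$ on the boundary, write $\nabla\psi=\tfrac12(I-H_{\partial\calB_{t}})\nabla\Phi$ on $\partial\calB(t)$, and commute $B^{k+1}$ through $H_{\partial\calB_{t}}$ using Proposition \ref{4.4}. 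The commutator is lower order in the scale of $\scE_{l}$ and can be absorbed into the $C_{2}\calE_{l-1}$ and $C_{h}\scE_{l}^{3/2}$ buckets; the remaining principal part is a boundary integral of $\partial B^{k}(\rho u)/|x-y|$, controlled again by elliptic estimates on $\Phi$. For the spatial derivatives hidden in $\scE_{l}$, I would close the loop using Lemma \ref{3.5} and Lemma \ref{3.6}: from $\square_{g}B^{k}u=G_{k}$ solve algebraically for $c_{s}^{2}\Delta B^{k}u$ in terms of $BB\cdot B^{k}u$, $B\bm{\uprho}\cdot BB^{k}u$ and lower order pieces, then invoke elliptic regularity with boundary data from the $B^{k+1}u$ $L^{2}$--norms. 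Iterating this for $2j+k\le l+2$ converts the $B$--energies $E_{\le l}[u,t]$ and $\underline E_{\le l+1}[\varepsilon,t]$ into full control of $\sum(\|B^{k}u\|_{H^{j}}^{2}+\|B^{k+1}\varepsilon\|_{H^{j}}^{2})$, again modulo commutators that fit into the three buckets.

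The main obstacle I expect is \emph{keeping track of the precise form of $C_{1}(\bar\rho)$} produced by the top order source, because the later bootstrap in Section \ref{6} depends on showing $C_{1}(\bar\rho)\lesssim -\tfrac{1}{\bar\rho}\partial_{y}\bar\rho^{\gamma}$, hence dominated by $\sqrt{\mu_{0}}$. Concretely, when Cauchy--Schwarzing the top--order term $c_{s}^{2}\delta^{ab}(\partial_{a}\bar{\bm{\uprho}})(\partial_{b}B^{k+1}\varepsilon)$ against $BB^{k+1}\varepsilon$, one must not pay a derivative on $\bar{\bm{\uprho}}$ (else the coefficient becomes too singular), and should split symmetrically between the $(1+c_{s}^{-1}c'_{s})$ piece from \eqref{2.15} and the $(1+3c_{s}^{-1}c'_{s})$ and $2c_{s}c'_{s}$ pieces from \eqref{2.24}. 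This is the place where the $C_{1}(\bar\rho)$ of Proposition \ref{5.1.} is created, and it must be left explicit so that it can be later compared with the linear growth rate $\sqrt{\mu_{0}}$. Once these top--order coefficients are isolated, the remaining assembly is mechanical: sum over $k$ and $j$, apply Gronwall's inequality to the integral form, and read off \eqref{5.5}.
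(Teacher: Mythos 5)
Your overall architecture coincides with the paper's: Lemma \ref{3.2.} applied to $B^{k}u$, Lemma \ref{3.3.} applied to $B^{k+1}\varepsilon$ (legitimate since $B\varepsilon=0$ on $\partial\calB$), recovery of the spatial Sobolev norms in $\scE_{l}$ via Lemmas \ref{3.5.}--\ref{3.6.} (this is the content of Proposition \ref{5.2.}), the Hilbert-transform treatment of the gravitational source, and the three-bucket sorting of sources with the explicit coefficient $C_{1}(\bar\rho)\sim(c_{s}^{2}+c_{s}c_{s}')\nabla\bar{\bm{\uprho}}$ kept visible for Section \ref{6}.

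There is, however, one concrete gap: you assert that after classification the source terms are handled by Cauchy--Schwarz and that ``the remaining assembly is mechanical,'' but this fails for the second-order spatial derivative sources at top order. In Lemma \ref{3.9.} item (4) and Lemma \ref{3.8.} item (2), the worst terms are of the form $(\cdots)\nabla^{(2)}B^{k-1}u$. The energy $\scE_{l}$ controls $\|B^{k'}u\|_{H^{j}}$ only for $2j+k'\le l+2$, so $\|\nabla^{(2)}B^{k-1}u\|_{L^{2}(\calB_{t})}$ requires $k+3\le l+2$, i.e.\ $k\le l-1$; at the top order $k=l$ this quantity is \emph{not} bounded by $\scE_{l}(t)$, and a direct Cauchy--Schwarz against $QB^{k+1}\varepsilon$ or $B^{k+1}u$ produces a term you cannot absorb. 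The paper resolves this by rewriting $\tilde F^{ab}\nabla_{a}\nabla_{b}B^{k-1}u$ with one index raised by $g$ and performing the spacetime integrations by parts \eqref{5.41} and \eqref{5.43}, which move one derivative onto the multiplier and onto the coefficients so that each resulting product contains at most one top-order factor; the boundary contributions are then harmless because $B$ is tangent to $\partial\calB$. Your proposal needs this step (or an equivalent device) to close. A secondary, smaller point: do not apply Gronwall at the end --- the estimate must be left in the integral form \eqref{5.5} so that the competition between $C_{1}(\bar\rho)$ and $\sqrt{\mu_{0}}$ can be exploited in Lemma \ref{1.4.}; Gronwalling would replace the coefficient $C_{1}(\bar\rho)$ by an exponential rate and destroy the structure the instability argument relies on.
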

\begin{remark}
	Combined with the discussion in Section \(\ref{1}\), the estimate \(\eqref{5.5}\) and \(\eqref{6.1}\), which we shall prove in the next section, actually show our main conclusion Theorem \(\ref{main th}\), that liquid Lane-Emden stars are nonlinearly unstable, where \(\scE_{l}\) corresponds to a stronger energy norm \(\arrowvert\arrowvert\arrowvert\cdot\arrowvert\arrowvert\arrowvert^{2}\) and \(\Arrowvert\varepsilon,u\Arrowvert^{2}_{L^{2}}\) corresponds to the weaker norm \(\arrowvert\arrowvert\cdot\arrowvert\arrowvert^{2}\).
\end{remark}
\begin{remark}
	The bootstrap assumptions \(\eqref{5.4}\) implies that the norm in the Lagrangian coordinate system is equivalent to the norm in the Cartesian coordinate system, so Proposition \(\ref{5.1.}\) still holds in the Lagrangian coordinate system. It is worth noting that when the conclusion \(\eqref{5.5}\) holds, we can close the bootstrap assumptions using the fundamental theorem of calculus.
\end{remark}

To prove Proposition \(\ref{5.1.}\) we need to show that higher order energies \(\underline{E}_{\le l+1}[\varepsilon,t]\) and \(E_{\le l}[u,t]\) give pointwise control on lower order derivatives of \(u\) and \(\varepsilon\) and \(L^{2}\) control of lower order Sobolev norms of \(u,\varepsilon\). The result is stated in the following proposition.

\begin{proposition}\label{5.2.}
	Under the assumptions of Proposition \(\eqref{5.1.}\), for any \(t\in[0,T]\), we have
	\begin{align}\label{5.6}
	\begin{split}
	&\sum_{k+2p\le l+2}\Arrowvert\partial_{t,x}^{p}B^{k}u\Arrowvert^{2}_{L^{2}\left( \calB_{t}\right) }+	\sum_{k+2p\le l+2}\Arrowvert\partial_{t,x}^{p}B^{k+1}\varepsilon\Arrowvert^{2}_{L^{2}\left( \calB_{t}\right)} \\
	&\lesssim \underline{E}_{\le l+1}[\varepsilon,t]+E_{\le l}[u,t]+	\sum_{k+2p\le l+2}\Arrowvert\partial_{t,x}^{p}B^{k}u\Arrowvert^{2}_{L^{2}\left( \calB_{0}\right) }+	\sum_{k+2p\le l+2}\Arrowvert\partial_{t,x}^{p}B^{k+1}\varepsilon\Arrowvert^{2}_{L^{2}\left( \calB_{0}\right)}+\int_{0}^{t}\scE_{l}(\tau)d\tau.
	\end{split}
	\end{align}
	The implicit coefficient in this estimate depends on the constants of the bootstrap assumption \(\eqref{5.4}\).
\end{proposition}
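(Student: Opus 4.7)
The plan is to induct on the number of spatial derivatives $j$ in $H^j(\calB_t)$, trading each pair of spatial derivatives for two $B$-derivatives by combining the identity $c_s^2\Delta = \square_g + B^2 + \text{l.o.t.}$ of Lemma~\ref{3.5.} with the elliptic estimate of Lemma~\ref{3.6.}. The base of the induction ($j=0$, pure $B$-derivatives) is controlled by $E_{\le l}[u,t]$ and $\underline{E}_{\le l+1}[\varepsilon,t]$ except at the very top order, where a fundamental-theorem-of-calculus (FTC) argument in time produces the $\int_0^t\scE_l(\tau)\,d\tau$ term on the right of \eqref{5.6}.

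\textbf{Step 1 (Elliptic-to-$B$ reduction).} For $(j,k)$ with $j\ge 1$ and $2j+k\le l+2$, apply Lemma~\ref{3.5.} to $B^k u$ (resp.\ $B^{k+1}\varepsilon$) to write $c_s^2\Delta B^k u = \square_g B^k u + B^{k+2}u + \text{l.o.t.}$, and invoke Lemma~\ref{3.8.} (resp.\ Lemma~\ref{3.9.}) to replace $\square_g B^k u$ by the explicit source $G_k$ (resp.\ $H_{k-1}$). Combined with Lemma~\ref{3.6.} this yields
\begin{align*}
\|B^k u\|_{H^j(\calB_t)}^2 \lesssim \|B^{k+2}u\|_{H^{j-2}(\calB_t)}^2 + \|G_k\|_{H^{j-2}(\calB_t)}^2 + \|B^k u\|_{H^{j-1/2}(\partial\calB_t)}^2,
\end{align*}
with the analogue for $\varepsilon$. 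Since $2(j-2)+(k+2)=2j+k-2$ is strictly smaller, iterating in $j$ drives us down to $j=0$; each source $G_k$, $H_{k-1}$ consists only of products of strictly lower-order $B$-derivatives (Lemmas~\ref{3.7.}--\ref{3.9.}), controlled by $\calE_{l-1}$ via Sobolev embedding and the bootstrap~\eqref{5.4}. For $\varepsilon$ and $B\varepsilon$ the boundary traces vanish by $\varepsilon|_{\partial\calB}=B\varepsilon|_{\partial\calB}=0$ and the tangentiality of $B$ to $\partial\calB$; for $u$ the trace is handled by the boundary equation in \eqref{2.33} and the boundary portion $\int_{\partial\calB}a^{-1}|B^{k+1}u|^2\,dS$ of $E_{\le l}[u,t]$.

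\textbf{Step 2 (Pure $B$-derivatives and FTC).} After Step~1 we need only $\|B^k u\|_{L^2(\calB_t)}^2$ for $k\le l+2$ and $\|B^{k+1}\varepsilon\|_{L^2(\calB_t)}^2$ for $k\le l+2$. For $k\le l+1$ these are immediately controlled by $E_{\le l}[u,t]$ and $\underline{E}_{\le l+1}[\varepsilon,t]$ after expanding $B=\partial_t+u^a\partial_a$. For the top terms $\|B^{l+2}u(t)\|_{L^2}^2$ and $\|B^{l+3}\varepsilon(t)\|_{L^2}^2$, apply FTC together with the transport identity
\begin{align*}
\frac{d}{dt}\int_{\calB(t)}|B^{l+2}u|^2\,dx = \int_{\calB(t)}\bigl(2B^{l+2}u\cdot B^{l+3}u + (\div u)|B^{l+2}u|^2\bigr)\,dx,
\end{align*}
and rewrite $B^{l+3}u$ using the momentum equation \eqref{2.2} together with $B\bar{\bm\uprho}=0$: $B^{l+3}u = -c_s^2\nabla B^{l+2}\varepsilon - B^{l+2}\nabla\psi + \text{l.o.t.}$ Integration by parts in $\int B^{l+2}u\cdot\nabla B^{l+2}\varepsilon\,dx$ has no boundary contribution (because $B^{l+2}\varepsilon|_{\partial\calB}=0$) and produces $-\int(\div B^{l+2}u)\,B^{l+2}\varepsilon\,dx$, which via the continuity equation \eqref{2.1} becomes $\int B^{l+3}\varepsilon\cdot B^{l+2}\varepsilon\,dx$ modulo lower-order commutators; now \emph{both factors are elements of $\scE_l$}. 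The gravitational piece $B^{l+2}\nabla\psi$ is controlled by the Hilbert-transform machinery of Section~\ref{4} (Lemma~\ref{4.2.} and Proposition~\ref{4.4.}). A symmetric argument bounds $\|B^{l+3}\varepsilon(t)\|_{L^2}^2$ using Lemma~\ref{3.9.}.

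\textbf{Main obstacle.} The principal difficulty is closing the circularity at the top of $\scE_l$: $\|B^{l+3}u\|_{L^2}$ is \emph{not} an element of $\scE_l$, so the naive Cauchy--Schwarz on $\int B^{l+2}u\cdot B^{l+3}u\,dx$ would escape the energy actually controlled. The resolution exploits the structural asymmetry between $u$ and $\varepsilon$ in $\scE_l$: integration by parts (available because $\varepsilon$ vanishes on $\partial\calB$) converts the normal derivative of $\varepsilon$ into the divergence of $u$, and the continuity equation then trades this divergence for the extra $B$-derivative $B^{l+3}\varepsilon$, which \emph{is} in $\scE_l$. Careful bookkeeping of the gravitational and commutator remainders via Section~\ref{4} then produces precisely the $\int_0^t\scE_l(\tau)\,d\tau$ correction on the right of~\eqref{5.6}.
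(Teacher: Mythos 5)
Your overall strategy---trading pairs of spatial derivatives for $B$-derivatives via Lemma \ref{3.5.} and the elliptic estimates of Lemma \ref{3.6.}, using the boundary equation to supply Neumann data for $u$, the Hilbert transform for the gravity terms, and a fundamental-theorem-of-calculus argument in time to generate $\int_0^t\scE_l$---is the same as the paper's. However, two steps as written would fail.

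First, in Step 1 the iteration of Lemma \ref{3.6.} at order $j\ge 3$ requires the boundary norm $\Arrowvert B^ku\Arrowvert_{H^{j-1/2}(\partial\calB_t)}$, i.e.\ $j-1$ derivatives of $B^ku$ on the boundary. The boundary equation \eqref{3.28} together with the boundary portion of $E_{\le l}[u,t]$ supplies only the single normal derivative $nB^ku$ at $H^{1/2}(\partial\calB_t)$ regularity, which is enough for one application of the Neumann version of Lemma \ref{3.6.} but not for the higher traces. This is precisely why the paper introduces the globally defined tangential operators $\nabla\mkern-10.5mu\slash_i$ and Lemma \ref{5.5.}: one first bounds $\nabla^{(2)}\nabla\mkern-10.5mu\slash^{p-1}B^{k+1}\varepsilon$ and then converts tangential derivatives into full gradients one at a time ($\nabla^{(3)}\nabla\mkern-10.5mu\slash^{p-2}$, etc.), so that at each stage only a low-regularity trace of a quantity whose remaining boundary derivatives are tangential (hence commutable) is needed. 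Your proposal has no substitute for this device.

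Second, your claim that the sources $G_k$, $H_{k-1}$ are ``controlled by $\calE_{l-1}$'' is false at the borderline. As the paper points out, $\nabla\mkern-10.5mu\slash^{p-1}H_k$ contains $\nabla^{p+1}B^{k}\varepsilon$ and $\nabla^{p+1}B^{k-1}u$ (from items (3) and (4) of Lemma \ref{3.9.}), and $\nabla^{p-1}F_k$ contains $\nabla^{p+1}B^{k-1}u$; these sit at the top of $\scE_l$ and are neither in the lower energies nor among the quantities already established at that stage of the iteration, so bounding them by $\scE_l(t)$ at the same time $t$ makes the estimate circular. The paper resolves this with the transport estimate of Lemma \ref{5.7.}, $\Arrowvert\nabla^aB^k\cdot\Arrowvert_{L^2(\calB_t)}\lesssim\Arrowvert\cdot\Arrowvert_{L^2(\calB_0)}+\int_0^t\scE_l^{1/2}(s)ds$ for $2a+k\le l+1$; this, rather than your Step 2, is where the initial-data terms and $\int_0^t\scE_l(\tau)d\tau$ on the right of \eqref{5.6} actually originate. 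Your Step 2 computation for $\Arrowvert B^{l+2}u\Arrowvert_{L^2}^2$ (integration by parts plus the continuity equation, converting $\int B^{l+2}u\cdot\nabla B^{l+2}\varepsilon\,dx$ into $\int B^{l+3}\varepsilon\,B^{l+2}\varepsilon\,dx$) is a nice observation, but the claimed ``symmetric argument'' for $\Arrowvert B^{l+3}\varepsilon\Arrowvert_{L^2}^2$ does not close: after the analogous integration by parts one is left with $\int\nabla B^{l+3}\varepsilon\cdot B^{l+3}u\,dx$, and neither factor---nor the substitution $B^{l+3}u\sim-c_s^2\nabla B^{l+2}\varepsilon$---lies within $\scE_l$.
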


Before we give the proof of the proposition, we need some preparation. First, we introduce some notations:
\begin{align}
\nabla_{i}:=\partial_{i},\quad\underline{n}:=\frac{(\partial_{1}\bm{\uprho},\partial_{2}\bm{\uprho},\partial_{3}\bm{\uprho})}{\sqrt{\sum_{i=1}^{3}(\partial_{i}\bm{\uprho})^{2}}},\quad\underline{n}_{i}:=\delta_{ij}\underline{n}^{j},\quad
\nabla\mkern-10.5mu\slash_{i}:=\partial_{i}-\underline{n}_{i}\underline{n}^{j}\partial_{j}.
\end{align}
Note that \(\nabla\mkern-10.5mu\slash_{i},i=1,2,3\)  are defined globally, are tangential to \(\partial\calB_{t}\), and span \(T\partial\calB_{t}\).
\begin{lemma}\label{5.5.}
	For any smooth function \(\phi\), the following estimate holds:
	\begin{align}\label{4.12}
	\begin{split}
	\Arrowvert\phi\Arrowvert_{H^{j}\left( \calB_{t}\right)}\lesssim\Arrowvert\phi\Arrowvert_{H^{j-1}\left( \calB_{t}\right)}+\Arrowvert\nabla^{(j-2)}\Delta\phi\Arrowvert_{L^{2}\left( \calB_{t}\right)}
	+\Arrowvert[\nabla\mkern-10.5mu\slash,\nabla^{(j-2)}]\phi\Arrowvert_{H^{1}\left( \calB_{t}\right)}+\Arrowvert\nabla\mkern-10.5mu\slash\phi\Arrowvert_{H^{j-1}\left( \calB_{t}\right)}.
	\end{split}
	\end{align}
\end{lemma}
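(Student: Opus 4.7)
The plan is to prove this higher-order elliptic estimate by induction on $j$, reducing each step to the standard Dirichlet-type bound of \lem{3.6.} via a pointwise tangential/normal decomposition adapted to the level sets of $\bm{\uprho}$. The bases $j \le 2$ are a direct application of \lem{3.6.}, after bounding the half-order boundary norm of $\phi$ by the tangential gradient on $\partial\calB_{t}$ via trace interpolation. For the inductive step the main tool is the splitting
\begin{align*}
\partial_{i}=\nabla\mkern-10.5mu\slash_{i}+\underline{n}_{i}\partial_{N},\qquad \partial_{N}:=\underline{n}^{k}\partial_{k},
\end{align*}
combined with the two algebraic identities $\sum_{i}\underline{n}_{i}^{2}=1$ and $\sum_{i}\underline{n}_{i}\nabla\mkern-10.5mu\slash_{i}=0$. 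Expanding $\Delta=\sum_{i}\partial_{i}^{2}$ with the splitting and collecting terms then yields the pointwise identity
\begin{align*}
\partial_{N}^{2}\phi=\Delta\phi-\sum_{i=1}^{3}\nabla\mkern-10.5mu\slash_{i}\nabla\mkern-10.5mu\slash_{i}\phi-R(\nabla\underline{n})\cdot\nabla\phi,
\end{align*}
where $R(\nabla\underline{n})$ is a smooth matrix built from $\nabla\underline{n}$. Thus a pair of normal derivatives can always be traded for a Laplacian, a tangential Laplacian, and a first-order remainder.

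Given any multi-index $\alpha$ with $|\alpha|=j$, I would factor $\partial^{\alpha}=\partial^{\beta}\partial^{\gamma}$ with $|\beta|=j-2$ and $|\gamma|=2$, and dispatch $\partial^{\gamma}\phi$ according to its tangential content. If at least one factor of $\partial^{\gamma}$ is tangential, then after commuting the $\nabla\mkern-10.5mu\slash$ past $\partial^{\beta}$ the resulting contribution is bounded by $\Arrowvert\nabla\mkern-10.5mu\slash\phi\Arrowvert_{H^{j-1}(\calB_{t})}$ plus the commutator $\Arrowvert[\nabla\mkern-10.5mu\slash,\nabla^{(j-2)}]\phi\Arrowvert_{H^{1}(\calB_{t})}$ (the extra $H^{1}$ accommodating the remaining second-order direction). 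If $\partial^{\gamma}=\partial_{N}^{2}$, the identity above replaces it by $\Delta\phi$, which, since $\Delta$ has constant coefficients, commutes freely with $\partial^{\beta}$ and contributes $\Arrowvert\nabla^{(j-2)}\Delta\phi\Arrowvert_{L^{2}(\calB_{t})}$; by a purely tangential piece $\sum_{i}\partial^{\beta}\nabla\mkern-10.5mu\slash_{i}\nabla\mkern-10.5mu\slash_{i}\phi$, treated as in the first case; and by a first-order remainder $R(\nabla\underline{n})\cdot\nabla\phi$ absorbed into $\Arrowvert\phi\Arrowvert_{H^{j-1}(\calB_{t})}$ after commuting $\partial^{\beta}$ through the smooth coefficient. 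Summing over $\alpha$ and invoking the inductive hypothesis on the strictly lower-order pieces completes the step.

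The main obstacle I anticipate is the careful bookkeeping of the variable-coefficient commutators. Since $\underline{n}=\nabla\bm{\uprho}/|\nabla\bm{\uprho}|$ depends nonlinearly on $\bm{\uprho}$, every commutation of $\nabla\mkern-10.5mu\slash$ (or $\partial_{N}$) with a plain derivative spawns factors of $\nabla^{(k)}\bm{\uprho}$ together with inverse powers of $|\nabla\bm{\uprho}|$. Uniform control of all such factors requires the bootstrap hypotheses \eqref{5.4} together with a non-degeneracy $|\nabla\bm{\uprho}|\gtrsim 1$ in a neighborhood of $\partial\calB_{t}$, which is inherited from the Taylor sign condition \eqref{Taylor sign}; in the bulk away from $\partial\calB_{t}$ the construction can be localized so that the vanishing of $\nabla\bm{\uprho}$ at the center of the star is harmless. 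Once all these factors are swallowed into the implicit constant, the clean four-term estimate of \lem{5.5.} emerges.
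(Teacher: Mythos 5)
Your proposal is correct in substance but follows a genuinely different route from the paper. The paper's proof is essentially two lines: apply the first estimate of \lem{3.6} (with $k=1$) to $\nabla^{(j-2)}\phi$, split the resulting boundary norm $\Arrowvert\nabla^{(j-2)}\phi\Arrowvert_{H^{3/2}(\partial\calB_t)}$ into a tangential derivative plus a lower-order piece, pull both back into interior $H^{1}(\calB_t)$ norms by the trace theorem, and finally commute $\nabla\mkern-10.5mu\slash$ with $\nabla^{(j-2)}$ to produce exactly the commutator term in \eqref{4.12}. You instead reprove the boundary elliptic estimate by hand: the pointwise identity $\Delta=\partial_N^2+\sum_i\nabla\mkern-10.5mu\slash_i\nabla\mkern-10.5mu\slash_i+(\text{first order})$, which you verify correctly using $\sum_i\underline{n}_i^2=1$ and $\sum_i\underline{n}_i\nabla\mkern-10.5mu\slash_i=0$, lets you trade two normal derivatives for $\Delta$ plus tangential second derivatives, and interior elliptic regularity handles the region away from $\partial\calB_t$. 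What each approach buys: the paper's version is shorter, never needs $|\nabla\bm{\uprho}|$ bounded below (the degeneracy at the center is hidden inside the black-box estimate of \lem{3.6}, and the coefficients of $\nabla\mkern-10.5mu\slash$ itself are bounded by $1$ everywhere), but it does pass through fractional boundary Sobolev spaces and the trace theorem. Your version avoids boundary norms entirely and is self-contained, but at the price of a localization argument, the explicit non-degeneracy $|\nabla\bm{\uprho}|\gtrsim 1$ near $\partial\calB_t$ (which, as you note, follows from \eqref{Taylor sign}), and heavier Leibniz/commutator bookkeeping for the derivatives of $\underline{n}$ — bookkeeping that both proofs must ultimately face in the term $\Arrowvert[\nabla\mkern-10.5mu\slash,\nabla^{(j-2)}]\phi\Arrowvert_{H^1}$, so this is not a gap. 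One remark: your base case $j=2$ (\lem{3.6} plus trace interpolation) is already the paper's entire argument in miniature; applying it directly to $\nabla^{(j-2)}\phi$ and commuting would give the general case at once, making the induction and the normal-derivative identity unnecessary.
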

\begin{proof}
	Using the first estimate in Lemma \(\ref{3.6.}\) and trace theorem
	\begin{align*}
	\Arrowvert\phi\Arrowvert_{H^{j}\left( \calB_{t}\right)}\lesssim&\Arrowvert\phi\Arrowvert_{H^{j-1}\left( \calB_{t}\right)}+\Arrowvert\Delta\nabla^{(j-2)}\phi\Arrowvert_{L^{2}\left( \calB_{t}\right)}+\Arrowvert\nabla\mkern-10.5mu\slash\nabla^{(j-2)}\phi\Arrowvert_{H^{\frac{1}{2}}\left( \partial\calB_{t}\right)}+\Arrowvert\nabla^{(j-2)}\phi\Arrowvert_{H^{\frac{1}{2}}\left( \partial\calB_{t}\right)}\\
	\lesssim&\Arrowvert\phi\Arrowvert_{H^{j-1}\left( \calB_{t}\right)}+\Arrowvert\Delta\nabla^{(j-2)}\phi\Arrowvert_{L^{2}\left( \calB_{t}\right)}+\Arrowvert\nabla\mkern-10.5mu\slash\nabla^{(j-2)}\phi\Arrowvert_{H^{1}\left( \calB_{t}\right)}.
	\end{align*}
	The desired estimate follows after commuting various operators.
\end{proof}
\begin{lemma}\label{5.6.}
	Under the bootstrap assumption \(\eqref{5.4}\), we have
	\begin{align}
	\Arrowvert\nabla^{a}B^{k}u\Arrowvert_{L^{\infty}\left( \calB_{t}\right)}+	\Arrowvert\nabla^{a}B^{k+1}\varepsilon\Arrowvert_{L^{\infty}\left( \calB_{t}\right)}\lesssim \scE_{l}^{\frac{1}{2}}(t),\quad\quad 0\le a \le p-2,\quad k\le l-2p-2, t\in[0,T].
	\end{align}
\end{lemma}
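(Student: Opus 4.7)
The plan is to combine standard Sobolev embedding with the definition of $\scE_l(t)$ in \eqref{5.3}. Because the bootstrap assumptions $|\nabla^{(m)}X(\cdot,t)|\le C_X$ and $|J(t)-1|\le\tfrac12$ guarantee that $\calB_t$ is the image of $\calB_0$ under a $C^m$-diffeomorphism with uniformly bounded derivatives, the Sobolev embedding $H^2(\calB_t)\hookrightarrow L^\infty(\calB_t)$ (valid in three spatial dimensions) holds with a constant depending only on $C_X$, hence uniform for $t\in[0,T]$.

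Next, I would apply this embedding componentwise to the vector-valued function $\nabla^a B^k u$ and to the scalar $\nabla^a B^{k+1}\varepsilon$, absorbing the $a$ extra spatial derivatives into the Sobolev index:
\begin{align*}
\|\nabla^a B^k u\|_{L^\infty(\calB_t)} &\lesssim \|\nabla^a B^k u\|_{H^2(\calB_t)} \le \|B^k u\|_{H^{a+2}(\calB_t)},\\
\|\nabla^a B^{k+1}\varepsilon\|_{L^\infty(\calB_t)} &\lesssim \|B^{k+1}\varepsilon\|_{H^{a+2}(\calB_t)}.
\end{align*}

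Finally I would verify that both right-hand sides are dominated by $\scE_l(t)^{1/2}$. From \eqref{5.3}, the norms $\|B^k u\|_{H^j(\calB_t)}$ and $\|B^{k+1}\varepsilon\|_{H^j(\calB_t)}$ are included in $\scE_l(t)$ exactly when $2j+k\le l+2$. Taking $j=a+2$, the admissibility condition becomes $k\le l-2a-2$. Under the hypotheses $a\le p-2$ and $k\le l-2p-2$ one has
\[
l-2a-2 \;\ge\; l-2(p-2)-2 \;=\; l-2p+2 \;>\; l-2p-2 \;\ge\; k,
\]
so both terms are controlled and the claim follows.

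There is no substantive obstacle in this argument; the only delicate point is the uniformity in $t$ of the Sobolev embedding constant, which is already taken care of by the bootstrap control \eqref{5.4} on the Lagrangian map $X$ and its Jacobian.
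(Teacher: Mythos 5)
Your argument is correct and is essentially the paper's proof, which simply invokes the Sobolev embedding $H^{2}(\calB_{t})\hookrightarrow L^{\infty}(\calB_{t})$; you have merely spelled out the index bookkeeping ($j=a+2$, $2j+k\le l+2$ reducing to $k\le l-2a-2$, which follows from $a\le p-2$ and $k\le l-2p-2$) and the uniformity of the embedding constant under the bootstrap assumptions. No changes needed.
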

\begin{proof}
	This follows from the Sobolev embedding \(H^{2}(\calB_{t})\hookrightarrow L^{\infty}(\calB_{t})\).
\end{proof}
\begin{lemma}\label{5.7.}
	Under the bootstrap assumption \(\eqref{5.4}\), if \(2a+k \le l+1, t\in[0,T]\), then
	\begin{align}
	\Arrowvert\nabla^{a}B^{k}u\Arrowvert_{L^{2}\left( \calB_{t}\right)}+	\Arrowvert\nabla^{a}B^{k+1}\varepsilon\Arrowvert_{L^{2}\left( \calB_{t}\right)}\lesssim 	\Arrowvert\nabla^{a}B^{k}u\Arrowvert_{L^{2}\left( \calB_{0}\right)}+	\Arrowvert\nabla^{a}B^{k+1}\varepsilon\Arrowvert_{L^{2}\left( \calB_{0}\right)}+\int_{0}^{t}\scE_{l}^{\frac{1}{2}}(s)ds.
	\end{align}
		The implicit coefficient in this estimate depends on the constants of the bootstrap assumption \(\eqref{5.4}\).
\end{lemma}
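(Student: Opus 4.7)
The plan is to convert the spatial $L^{2}$-bound into a first-order-in-time inequality via Reynolds transport, and then integrate. For any smooth function $f$ on the moving domain, one has
\begin{align}
\frac{d}{dt}\int_{\calB_{t}}|f|^{2}\,dx
=\int_{\calB_{t}}\bigl(2 f\cdot Bf+|f|^{2}\div u\bigr)\,dx,
\end{align}
hence
\begin{align}
\frac{d}{dt}\|f\|_{L^{2}(\calB_{t})}
\le \|Bf\|_{L^{2}(\calB_{t})}+\tfrac{1}{2}\|\div u\|_{L^{\infty}(\calB_{t})}\,\|f\|_{L^{2}(\calB_{t})}.
\end{align}
The bootstrap assumption \eqref{5.4} and \lem{5.6.} give $\|\div u\|_{L^{\infty}}\lesssim 1$, so the last term contributes $\lesssim\|f\|_{L^{2}(\calB_{t})}\lesssim \scE_{l}^{1/2}(t)$.

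The core task is to prove $\|B(\nabla^{a}B^{k}u)\|_{L^{2}(\calB_{t})}\lesssim \scE_{l}^{1/2}(t)$, and similarly for $\varepsilon$. First I would split
\begin{align}
B(\nabla^{a}B^{k}u)=\nabla^{a}B^{k+1}u+[B,\nabla^{a}]B^{k}u.
\end{align}
The leading piece $\nabla^{a}B^{k+1}u$ is part of $\|B^{k+1}u\|_{H^{a}}^{2}$, and the definition \eqref{5.3} of $\scE_{l}$ controls this term precisely under the hypothesis $2a+k\le l+1$. The commutator expands via $[B,\partial_{i}]=-(\partial_{i}u^{j})\partial_{j}$ into a finite sum of multilinear terms of the schematic form
\begin{align}
\bigl(\nabla^{p_{1}}u\bigr)\cdots\bigl(\nabla^{p_{m}}u\bigr)\bigl(\nabla^{q}B^{k}u\bigr),
\qquad p_{i}\ge 1,\ q\le a-1,\ \sum p_{i}+q=a.
\end{align}
I would place the highest-order factor in $L^{2}$ and all remaining factors in $L^{\infty}$ via \lem{5.6.}; for $l$ sufficiently large every low-order factor lies inside the Sobolev window, so the entire sum is $\lesssim \scE_{l}^{1/2}(t)$ times bootstrap constants. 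The same argument applies to $\nabla^{a}B^{k+1}\varepsilon$: the corresponding leading term is $\nabla^{a}B^{k+2}\varepsilon$, whose $L^{2}$-norm is dominated by $\|B^{k+2}\varepsilon\|_{H^{a}}^{2}\le\scE_{l}$ under the same index condition $2a+k\le l+1$.

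Putting the pieces together, $\frac{d}{dt}\|\nabla^{a}B^{k}u\|_{L^{2}(\calB_{t})}\lesssim \scE_{l}^{1/2}(t)$, and integrating from $0$ to $t$ yields the stated inequality. The bootstrap control $|J(t)-1|,|\tilde J(t)-1|\le \tfrac12$ guarantees that Eulerian and Lagrangian $L^{2}$-norms are comparable, so no additional adjustment is needed when pulling the initial-data term back to $\calB_{0}$.

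The main obstacle, and the only place that requires care, is the combinatorics of the commutator expansion: verifying that at most one factor ever needs the $L^{2}$-estimate. Since every $p_{i}\ge 1$, the number of factors $m$ is at most $a$ and each individual $p_{i}\le a-1$; combined with $2a+k\le l+1$ this forces $2p_{i}\le 2(a-1)\ll l$ for all but the highest-order factor, which is exactly the range in which \lem{5.6.} supplies pointwise bounds. No new analytic input is required beyond \lems{5.5.}--\ref{5.6.} and the bootstrap assumption \eqref{5.4}.
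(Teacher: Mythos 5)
Your proof is correct and follows essentially the same route as the paper: both reduce the lemma to the bound $\Arrowvert B\phi\Arrowvert_{L^{2}(\calB_{s})}\lesssim\scE_{l}^{1/2}(s)$ for $\phi=\nabla^{a}B^{k}u$ and $\phi=\nabla^{a}B^{k+1}\varepsilon$ and then integrate in time, the only cosmetic difference being that the paper uses the fundamental theorem of calculus along Lagrangian trajectories together with the Jacobian bounds in \eqref{5.4}, whereas you use the Reynolds transport identity (which introduces the harmless $\div u$ term). Your explicit treatment of the commutator $[B,\nabla^{a}]$, which the paper leaves implicit, is sound, up to a minor bookkeeping slip in the derivative count ($\sum p_{i}+q=a+1$ rather than $a$ for the single-commutator terms) that does not affect the argument.
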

\begin{proof}
	We rcall the Lagrangian parametrization of \(X\), that is
	\begin{align*}
	\partial_{\tau}X(\tau,y)=Bu(X(\tau,y)),\quad\quad X(0,y)=y.
	\end{align*}
	If \(p_{t}\) is a point on \(\calB_{t}\), we let \(p_{0}\) be the point on \(\calB_{0}\) such that \(X(t,p_{0})=p_{t}\). For any function \(\phi\)
	\begin{align}
	\phi(p_{t})-\phi(p_{0})=\int_{0}^{t}B\phi(p_{\tau})d\tau.
	\end{align}
	According to the bootstrap assumption \(\ref{5.4}\), we can bound the Jacobian of the Lagrangian coordinate transformation from \(\calB_{0}\) to \(\calB_{t}\), therefore we get
	\begin{align*}
	\Arrowvert\phi\Arrowvert_{L^{2}\left( \calB_{t}\right)}\lesssim \Arrowvert\phi\Arrowvert_{L^{2}\left( \calB_{0}\right)}+\int_{0}^{t}\Arrowvert B\phi\Arrowvert_{L^{2}\left( \calB_{s}\right)}ds.
	\end{align*}
	We apply this estimate to \(\phi=\nabla^{a}B^{k}u\) as well \(\phi=\nabla^{a}B^{k+1}\varepsilon\). Then as long as \(2a+k\le l+1\), we have
	\begin{align*}
	\Arrowvert B\phi\Arrowvert_{L^{2}\left( \calB_{s}\right)}\lesssim \scE_{l}^{\frac{1}{2}}(s),
	\end{align*}
	which completes the proof of Lemma \(\ref{5.7.}\).
\end{proof}

\begin{proof}[Proof of Proposition \(\eqref{5.2.}\)]
	Note that we only need to consider \(\partial_{x}^{p}B^{k}u\). Indeed, using induction on the order of \(\partial_{t}\), for \(\partial_{t}\partial_{x}^{p-1}B^{k}u\), we have
	\begin{align}
	\begin{split}
	\partial_{t}\partial_{x}^{p-1}B^{k}u=&B\partial_{x}^{p-1}B^{k}u-u^{a}\partial_{a}\partial_{x}^{p-1}B^{k}u\\
	=&[B,\partial_{x}^{p-1}]B^{k}u+\partial_{x}^{p-1}B^{k+1}u-u^{a}\partial_{a}\partial_{x}^{p-1}B^{k}u.
	\end{split}
	\end{align}
	If we can estimate \(\partial_{t}^{p'}\partial_{x}^{p-p'}B^{k}u\), for \(\partial_{t}^{p'+1}\partial_{x}^{p-p'-1}B^{k}u\), we have
	\begin{align}
	\partial_{t}^{p'+1}\partial_{x}^{p-p'-1}B^{k}u=\partial_{t}\partial_{t}^{p'}\partial_{x}^{p-p'-1}B^{k}u.
	\end{align}
	The induction argument follows exactly the same way as we treat the case when \(p'=0\). The argument for \(\varepsilon\) is the same. Turning to \(\partial_{x}^{p}B^{k}u\), we will use an induction argument on \(p\). When \(p=1\), the result follows
	directly by definition. Now we assume that the estimate holds for index less or equal to \(1\le p \le \frac{M+2}{2}-1\), that is, 
	\begin{align}\label{5.14}
\begin{split}
&\sum_{q\le p}\sum_{k+2q\le l+2}\Arrowvert\partial_{t,x}^{q}B^{k}u\Arrowvert^{2}_{L^{2}\left( \calB_{t}\right) }+	\sum_{q\le p}\sum_{k+2q\le l+2}\Arrowvert\partial_{t,x}^{q}B^{k+1}\varepsilon\Arrowvert^{2}_{L^{2}\left( \calB_{t}\right)} \\
&\lesssim \underline{E}_{\le l+1}[\varepsilon,t]+E_{\le l}[u,t]+	\sum_{k+2q\le l+2}\Arrowvert\partial_{t,x}^{q}B^{k}u\Arrowvert^{2}_{L^{2}\left( \calB_{0}\right) }+	\sum_{k+2q\le l+2}\Arrowvert\partial_{t,x}^{q}B^{k+1}\varepsilon\Arrowvert^{2}_{L^{2}\left( \calB_{0}\right)}+\int_{0}^{t}\scE_{l}(\tau)d\tau.
\end{split}
\end{align}
and prove the estimates for \(p+1\), that is,
	\begin{align}\label{5.15}
\begin{split}
&\sum_{k\le l-2p}\Arrowvert\partial_{t,x}^{p+1}B^{k}u\Arrowvert^{2}_{L^{2}\left( \calB_{t}\right) }+	\sum_{k\le l-2p}\Arrowvert\partial_{t,x}^{p+1}B^{k+1}\varepsilon\Arrowvert^{2}_{L^{2}\left( \calB_{t}\right)} \\
&\lesssim \underline{E}_{\le l+1}[\varepsilon,t]+E_{\le l}[u,t]+	\sum_{k+2q\le l+2}\Arrowvert\partial_{t,x}^{q}B^{k}u\Arrowvert^{2}_{L^{2}\left( \calB_{0}\right) }+	\sum_{k+2q\le l+2}\Arrowvert\partial_{t,x}^{q}B^{k+1}\varepsilon\Arrowvert^{2}_{L^{2}\left( \calB_{0}\right)}+\int_{0}^{t}\scE_{l}(\tau)d\tau.
\end{split}
\end{align}
We start with the estimate for \(\Arrowvert\nabla_{x}^{p+1}B^{k+1}\varepsilon\Arrowvert^{2}_{L^{2}\left( \calB_{t}\right) } \) and in fact first estimate \(\Arrowvert\nabla_{x}^{(2)}\nabla\mkern-10.5mu\slash^{(p-1)}B^{k+1}\varepsilon\Arrowvert^{2}_{L^{2}\left( \calB_{t}\right) }\). To apply Lemma \(\eqref{3.5.}\) to \(\phi:=\nabla\mkern-10.5mu\slash^{p-1}B^{k+1}\varepsilon\) we need to estimate \(\Arrowvert\Delta\nabla\mkern-10.5mu\slash^{(p-1)}B^{k+1}\varepsilon\Arrowvert^{2}_{L^{2}\left( \calB_{t}\right)}\). Using the notation
of Lemma \(\eqref{3.9.}\), we have
\begin{align}\label{5.16}
\begin{split}
\Delta\nabla\mkern-10.5mu\slash^{p-1}B^{k+1}\varepsilon\sim&\nabla\mkern-10.5mu\slash^{p-1}H_{k}+[\nabla\mkern-10.5mu\slash^{p-1},\square_{g}]B^{k+1}\varepsilon+\left( \nabla\bm{\uprho}\right) \left( \nabla\nabla\mkern-10.5mu\slash^{p-1}B^{k+1}\varepsilon\right) \\
&+\nabla\nabla\mkern-10.5mu\slash^{p-1}B^{k+2}\varepsilon+\nabla[B,\nabla\mkern-10.5mu\slash^{p-1}]B^{k+1}\varepsilon.
\end{split}
\end{align}
Except for \(\nabla\mkern-10.5mu\slash^{p-1}H_{k}\) the \(L^{2}\left( \calB_{t}\right)\) norms of all the terms on the right-hand side of \(\eqref{5.16}\) are bounded by the
right-hand side of \(\eqref{5.15}\) using the induction hypothesis \(\eqref{5.14}\). Here for the terms where derivatives hit
the coefficients of \(\nabla\mkern-10.5mu\slash\) it suffices to observe that these coefficients are functions of \(\nabla\bm{\uprho}\). Next we
investigate the structure of \(\nabla\mkern-10.5mu\slash^{p-1}H_{k}\). In view of Lemma \(\eqref{3.9.}\), the remainder terms in \(\nabla\mkern-10.5mu\slash^{p-1}H_{k}\) are 
\begin{align}
\nabla^{p+1}B^{k}\varepsilon,\ \ \nabla^{p+1}B^{k-1}u \ \ and \ \ \nabla^{p-1}B^{k}\nabla\nabla\psi.
\end{align}
The \(L^{2}\left( \calB_{t}\right)\) norm of all other term appearing in \(\nabla\mkern-10.5mu\slash^{p-1}H_{k}\) can be bounded by the right-hand side of \(\eqref{5.15}\) using the induction hypothesis \(\eqref{5.14}\). For the first two terms above, since \(k\le l-2p\) we can use Lemma \(\ref{5.7.}\) to bound the \(L^{2}\left( \calB_{t}\right)\) norms of these terms by the right-hand side of \(\eqref{5.15}\) as well. For the last term, we can use Elliptic estimate and Hilbert transform (which is introduced in the Section \(\eqref{4}\)) frequently to bound its \(L^{2}\left( \calB_{t}\right)\) norms. 

Next we turn to the gravity term. In view of Lemma \(\eqref{3.6.}\),we have
\begin{align}\label{5.18}
\begin{split}
\Arrowvert\nabla^{p-1}B^{k}\nabla\nabla\psi\Arrowvert_{L^{2}\left( \calB_{t}\right) }\lesssim&\Arrowvert\nabla\nabla\nabla^{p-1}B^{k}\psi\Arrowvert_{L^{2}\left( \calB_{t}\right) }+\Arrowvert\nabla^{p-1}[B^{k},\nabla\nabla]\psi\Arrowvert_{L^{2}\left( \calB_{t}\right) }\\
\lesssim&\Arrowvert\nabla^{p-1}B^{k}\psi\Arrowvert_{H^{\frac{3}{2}}\left( \partial\calB_{t}\right)}+\Arrowvert\nabla^{p-1}[\Delta,B^{k}]\psi\Arrowvert_{L^{2}\left( \calB_{t}\right)}\\
&+\Arrowvert\nabla^{p-1}B^{k}\Delta\psi\Arrowvert_{L^{2}\left( \calB_{t}\right)}+\Arrowvert\nabla^{p-1}[B^{k},\nabla\nabla]\psi\Arrowvert_{L^{2}\left( \calB_{t}\right) }\\
:=&\Arrowvert\nabla^{p-1}B^{k}\psi\Arrowvert_{H^{\frac{3}{2}}\left( \partial\calB_{t}\right)}+R_{1}
\end{split}
\end{align}
We focus on the boundary term,
\begin{align}\label{5.19}
\begin{split}
\Arrowvert\nabla^{p-1}B^{k}\psi\Arrowvert_{H^{\frac{3}{2}}\left( \partial\calB_{t}\right)}&\lesssim\Arrowvert\nabla\nabla^{p-1}B^{k}\psi\Arrowvert_{H^{\frac{1}{2}}\left( \partial\calB_{t}\right)}+\Arrowvert\nabla^{p-1}B^{k}\psi\Arrowvert_{H^{\frac{1}{2}}\left( \partial\calB_{t}\right)}\\
&\lesssim\Arrowvert B^{k}\nabla\nabla^{p-1}\psi\Arrowvert_{H^{\frac{1}{2}}\left( \partial\calB_{t}\right)}+\Arrowvert [B^{k},\nabla\nabla^{p-1}]\psi\Arrowvert_{H^{\frac{1}{2}}\left( \partial\calB_{t}\right)}+\Arrowvert\nabla^{p-1}B^{k}\psi\Arrowvert_{H^{\frac{1}{2}}\left( \partial\calB_{t}\right)}\\
&\lesssim\Arrowvert B^{k}\nabla\nabla^{p}\psi\Arrowvert_{L^{2}\left( \partial\calB_{t}\right)}+R_{2}.
\end{split}
\end{align}
Let function \(\Phi\) satisfy \(\Delta\Phi=4\pi G\nabla^{p}\rho,\  in \   \calB(t),\ \ \Phi=0,\ on \ \partial\calB(t) \), then
\begin{align}\label{5.20}
\begin{split}
\Arrowvert B^{k}\nabla\nabla^{p}\psi\Arrowvert_{L^{2}\left( \partial\calB_{t}\right)}&=\frac{1}{2}\Arrowvert B^{k}\left( I-H_{\partial\calB_{t}}\right)\nabla\Phi \Arrowvert_{L^{2}\left( \partial\calB_{t}\right)}\\
&\lesssim\Arrowvert H_{\partial\calB_{t}} B^{k}\nabla\Phi \Arrowvert_{L^{2}\left( \partial\calB_{t}\right)}+\Arrowvert [B^{k},H_{\partial\calB_{t}}]\nabla\Phi \Arrowvert_{L^{2}\left( \partial\calB_{t}\right)}.
\end{split}
\end{align}
For the first term above, we have
\begin{align}\label{5.21}
\begin{split}
\Arrowvert H_{\partial\calB_{t}} B^{k}\nabla\Phi \Arrowvert_{L^{2}\left( \partial\calB_{t}\right)}&\lesssim\Arrowvert B^{k}\nabla\Phi \Arrowvert_{L^{2}\left( \partial\calB_{t}\right)}\\
&\lesssim\Arrowvert \nabla B^{k}\Phi \Arrowvert_{L^{2}\left( \partial\calB_{t}\right)}+\Arrowvert [B^{k},\nabla]\Phi \Arrowvert_{L^{2}\left( \partial\calB_{t}\right)}\\
&\lesssim\Arrowvert  B^{k}\Phi \Arrowvert_{H^{2}\left( \calB_{t}\right)}+\Arrowvert [B^{k},\nabla]\Phi \Arrowvert_{H^{1}\left( \calB_{t}\right)}\\
&\lesssim\Arrowvert  B^{k}\Delta\Phi \Arrowvert_{L^{2}\left( \calB_{t}\right)}+\Arrowvert  [B^{k},\Delta]\Phi \Arrowvert_{L^{2}\left( \calB_{t}\right)}+\Arrowvert [B^{k},\nabla]\Phi \Arrowvert_{H^{1}\left( \calB_{t}\right)}\\
&\lesssim\Arrowvert  \nabla^{p}B^{k}\rho   \Arrowvert_{L^{2}\left( \calB_{t}\right)}+\Arrowvert  [B^{k},\nabla^{p}]\rho   \Arrowvert_{L^{2}\left( \calB_{t}\right)}+R_{3}.
\end{split}
\end{align}
The first two terms on the right-hand side of \(\eqref{5.21}\) are bounded by the right-hand side of \(\eqref{5.15}\) using the induction hypothesis \(\eqref{5.14}\). According to trace theorem and commutator identities, the top order terms about \(u,\varepsilon\) in \(R_{1},R_{2}\) and \(R_{3}\) can be bounded as well. For the remainder terms in \(R_{1},R_{3}\) 
\begin{align}\label{5.22}
\sum_{1\le i\le 2}\sum_{j\le k-1}\Arrowvert\nabla^{p-1}\nabla^{i}B^{j}\psi\Arrowvert_{L^{2}\left( \calB_{t}\right)}\quad and \quad \sum_{1\le i\le 2}\sum_{j\le k-1}\Arrowvert\nabla^{p-1}\nabla^{i}B^{j}\Phi\Arrowvert_{L^{2}\left( \calB_{t}\right)},
\end{align}
we can use elliptic estimates repeatedly to control. For the remainder terms in \(R_{2}\), we can choose the appropriate function \(\Phi\) to bound them in the same way as in the treatment of \(B^{k}\nabla\nabla^{p}\psi\) in \(\eqref{5.20}\). The fact that \([B^{k},H_{\partial\calB_{t}}]\nabla\Phi\) are the lower order terms and some remaining details are proved in Section \(\ref{4}\). Based
on this discussion, Using \(\eqref{5.16}\) and Lemma \(\ref{3.5.}\), for any \(k\le l-2p\) we obtain
\begin{align}\label{5.23}
\begin{split}
&\Arrowvert\nabla^{(2)}\nabla\mkern-10.5mu\slash^{p-1}B^{k+1}\varepsilon\Arrowvert^{2}_{L^{2}\left( \calB_{t}\right) }\\
\lesssim&\underline{E}_{\le l+1}[\varepsilon,t]+E_{\le l}[u,t]+	\sum_{k+2q\le l+2}\Arrowvert\partial_{t,x}^{q}B^{k}u\Arrowvert^{2}_{L^{2}\left( \calB_{0}\right) }+	\sum_{k+2q\le l+2}\Arrowvert\partial_{t,x}^{q}B^{k+1}\varepsilon\Arrowvert^{2}_{L^{2}\left( \calB_{0}\right)}+\int_{0}^{t}\scE_{l}(\tau)d\tau.
\end{split}
\end{align}
Next we apply Lemma \(\ref{5.5.}\) to \(\phi:=\nabla\nabla\mkern-10.5mu\slash^{p-2}B^{k+1}\varepsilon\) to get
\begin{align}\label{5.24}
\begin{split}
\Arrowvert\nabla\nabla\mkern-10.5mu\slash^{p-2}B^{k+1}\varepsilon\Arrowvert_{H^{2}\left( \calB_{t}\right)}\lesssim&\Arrowvert\nabla\nabla\mkern-10.5mu\slash^{p-2}\Delta B^{k+1}\varepsilon\Arrowvert_{L^{2}\left( \calB_{t}\right)}+\Arrowvert[\nabla\nabla\mkern-10.5mu\slash^{p-2},\Delta]B^{k+1}\varepsilon\Arrowvert_{L^{2}\left( \calB_{t}\right)}+\Arrowvert\nabla\mkern-10.5mu\slash^{p-2}B^{k+1}\varepsilon\Arrowvert_{H^{2}\left( \calB_{t}\right)}\\
&+\Arrowvert[\nabla,\nabla\mkern-10.5mu\slash]\nabla\mkern-10.5mu\slash^{p-2}B^{k+1}\varepsilon\Arrowvert_{H^{1}\left( \calB_{t}\right)}+\Arrowvert\nabla\mkern-10.5mu\slash^{p-1}B^{k+1}\varepsilon\Arrowvert_{H^{2}\left( \calB_{t}\right)}.
\end{split}
\end{align}
By \(\eqref{5.23}\) and the arguments leading to it, all the terms on the right-hand side of \(\eqref{5.24}\) except 
\begin{align*}
\Arrowvert\nabla\nabla\mkern-10.5mu\slash^{p-2}\Delta B^{k+1}\varepsilon\Arrowvert_{L^{2}\left( \calB_{t}\right)}
\end{align*}
are bounded by the right-hand side of \(\eqref{5.15}\). The term \(\Arrowvert\nabla\nabla\mkern-10.5mu\slash^{p-2}\Delta B^{k+1}\bm{\uprho}\Arrowvert_{L^{2}\left( \calB_{t}\right)}\) is bounded in the same
way as in the treatment of \(\nabla\mkern-10.5mu\slash^{p-1}H_{k}\) above. Summarizing we have obtained
\begin{align}\label{5.25}
\begin{split}
&\Arrowvert\nabla^{(3)}\nabla\mkern-10.5mu\slash^{p-2}B^{k+1}\varepsilon\Arrowvert^{2}_{L^{2}\left( \calB_{t}\right) }\\
\lesssim&\underline{E}_{\le l+1}[\varepsilon,t]+E_{\le l}[u,t]+	\sum_{k+2q\le l+2}\Arrowvert\partial_{t,x}^{q}B^{k}u\Arrowvert^{2}_{L^{2}\left( \calB_{0}\right) }+	\sum_{k+2q\le l+2}\Arrowvert\partial_{t,x}^{q}B^{k+1}\varepsilon\Arrowvert^{2}_{L^{2}\left( \calB_{0}\right)}+\int_{0}^{t}\scE_{l}(\tau)d\tau.
\end{split}
\end{align}
Repeating the argument inductively for \(\phi:=\nabla^{2}\nabla\mkern-10.5mu\slash^{p-3}B^{k+1}\varepsilon,\nabla^{3}\nabla\mkern-10.5mu\slash^{p-4}B^{k+1}\varepsilon,\dots\) we finally obtain
\begin{align}\label{5.26}
\begin{split}
&\Arrowvert\nabla^{p}B^{k+1}\varepsilon\Arrowvert^{2}_{L^{2}\left( \calB_{t}\right) }\\
\lesssim&\underline{E}_{\le l+1}[\varepsilon,t]+E_{\le l}[u,t]+	\sum_{k+2q\le l+2}\Arrowvert\partial_{t,x}^{q}B^{k}u\Arrowvert^{2}_{L^{2}\left( \calB_{0}\right) }+	\sum_{k+2q\le l+2}\Arrowvert\partial_{t,x}^{q}B^{k+1}\varepsilon\Arrowvert^{2}_{L^{2}\left( \calB_{0}\right)}+\int_{0}^{t}\scE_{l}(\tau)d\tau.
\end{split}
\end{align}
Next we use the second estimate in Lemma \(\ref{3.6.}\) to estimate \(\Arrowvert\nabla^{p+1}B^{k}u\Arrowvert_{L^{2}\left( \calB_{t}\right) },\ k+2p+2\le l+2\), under
the induction hypothesis \(\eqref{5.14}\). The second estimate in Lemma \(\ref{3.6.}\) gives
\begin{align}\label{5.27}
\Arrowvert\nabla^{p+1}B^{k}u\Arrowvert_{L^{2}\left( \calB_{t}\right) }\lesssim\Arrowvert\Delta\nabla^{p-1}B^{k}u\Arrowvert_{L^{2}\left( \calB_{t}\right) }+
\Arrowvert\delta^{ab}\left(\nabla_{a}\bm{\uprho} \right) \nabla_{b}\left( \nabla^{p-1}B^{k}u \right)  \Arrowvert_{H^{\frac{1}{2}}\left( \partial\calB_{t}\right) }.
\end{align}
The term \(\Delta\nabla^{p-1}B^{k}u\) has the similar structure to the corresponding term in \(\eqref{5.16}\) and can be handled using
similar considerations, so we concentrate on the boundary contribution \(\Arrowvert\delta^{ab}\left(\nabla_{a}\bm{\uprho} \right) \nabla_{b}\left( \nabla^{p-1}B^{k}u \right)  \Arrowvert_{H^{\frac{1}{2}}\left( \partial\calB_{t}\right) }\). Using the trace theorem and Lemma \(\ref{3.7.}\), we have
\begin{align}
\begin{split}
&\Arrowvert\delta^{ab}\left(\nabla_{a}\bm{\uprho} \right) \nabla_{b}\left( \nabla^{p-1}B^{k}u \right)  \Arrowvert_{H^{\frac{1}{2}}\left( \partial\calB_{t}\right) }\\
\lesssim&\Arrowvert[\delta^{ab}\left(\nabla_{a}\bm{\uprho} \right) \nabla_{b},\nabla^{p-1}]B^{k}u   \Arrowvert_{H^{1}\left( \calB_{t}\right) }+\Arrowvert\nabla^{p-1}B^{k+2}u   \Arrowvert_{H^{1}\left( \calB_{t}\right) }+\Arrowvert\nabla^{p}B^{k+1}\varepsilon   \Arrowvert_{H^{1}\left( \calB_{t}\right) }+\Arrowvert\nabla^{p-1}F_{k}   \Arrowvert_{H^{\frac{1}{2}}\left( \partial\calB_{t}\right) }.
\end{split}
\end{align}
Except for the last term \(\Arrowvert\nabla^{p-1}F_{k}   \Arrowvert_{H^{\frac{1}{2}}\left( \partial\calB_{t}\right) }\), all other terms on the right above are bounded by the right-hand
side of \(\eqref{5.15}\) using the induction hypothesis \(\eqref{5.14}\) and \(\eqref{5.26}\). For \(\Arrowvert\nabla^{p-1}F_{k}   \Arrowvert_{H^{\frac{1}{2}}\left( \partial\calB_{t}\right) }\), in view of Lemma \(\ref{3.7.}\) the highest order terms are
\begin{align*}
\Arrowvert \nabla^{p+1}B^{k-1}u\Arrowvert_{L^{2}\left( \calB_{t}\right) },\quad \Arrowvert \nabla^{p+1}B^{k}\varepsilon\Arrowvert_{L^{2}\left( \calB_{t}\right) }\quad and \quad \Arrowvert \nabla^{p-1}B^{k+1}\nabla\psi\Arrowvert_{H^{\frac{1}{2}}\left( \partial\calB_{t}\right) }.
\end{align*}
The term \(\Arrowvert \nabla^{p+1}B^{k}\varepsilon\Arrowvert_{L^{2}\left( \calB_{t}\right) }\) was already bounded in \(\eqref{5.26}\), and \(\Arrowvert \nabla^{p+1}B^{k-1}u\Arrowvert_{L^{2}\left( \calB_{t}\right) }\) can be handled using
Lemma \(\ref{5.7.}\). Noticing the following result
\begin{align}
\Arrowvert \nabla^{p-1}B^{k+1}\nabla\psi\Arrowvert_{H^{\frac{1}{2}}\left( \partial\calB_{t}\right) }\sim \Arrowvert \nabla^{p-1}B^{k}\nabla_{t,x}\nabla\psi\Arrowvert_{H^{\frac{1}{2}}\left( \partial\calB_{t}\right) }.
\end{align}
Therefore we can bound \(\Arrowvert \nabla^{p-1}B^{k+1}\nabla\psi\Arrowvert_{H^{\frac{1}{2}}\left( \partial\calB_{t}\right) }\) in the same
way as in the treatment of \(\eqref{5.19}-\eqref{5.22}\). Putting everything together we have proved that
\begin{align}\label{5.30}
\begin{split}
&\Arrowvert\delta^{ab}\left(\nabla_{a}\bm{\uprho} \right) \nabla_{b}\left( \nabla^{p-1}B^{k}u \right)  \Arrowvert_{H^{\frac{1}{2}}\left( \partial\calB_{t}\right) }\\
&\lesssim\underline{E}_{\le l+1}[\varepsilon,t]+E_{\le l}[u,t]+	\sum_{k+2q\le l+2}\Arrowvert\partial_{t,x}^{q}B^{k}u\Arrowvert^{2}_{L^{2}\left( \calB_{0}\right) }+	\sum_{k+2q\le l+2}\Arrowvert\partial_{t,x}^{q}B^{k+1}\varepsilon\Arrowvert^{2}_{L^{2}\left( \calB_{0}\right)}+\int_{0}^{t}\scE_{l}(\tau)d\tau.
\end{split}
\end{align}
Combining \(\eqref{5.27}\) and \(\eqref{5.30}\), we finally obtain
\begin{align}\label{5.31}
\begin{split}
&\Arrowvert\nabla^{p}B^{k}u\Arrowvert^{2}_{L^{2}\left( \calB_{t}\right) }\\
\lesssim&\underline{E}_{\le l+1}[\varepsilon,t]+E_{\le l}[u,t]+	\sum_{k+2q\le l+2}\Arrowvert\partial_{t,x}^{q}B^{k}u\Arrowvert^{2}_{L^{2}\left( \calB_{0}\right) }+	\sum_{k+2q\le l+2}\Arrowvert\partial_{t,x}^{q}B^{k+1}\varepsilon\Arrowvert^{2}_{L^{2}\left( \calB_{0}\right)}+\int_{0}^{t}\scE_{l}(\tau)d\tau,
\end{split}
\end{align}
which completes the proof of Proposition \(\ref{5.2.}\).
\end{proof}
The following we only need to show that \(\underline{E}_{\le l+1}[\varepsilon,t]\) and \(E_{\le l}[u,t]\) are bounded by the right-hand side of \(\eqref{5.5}\). we will complete this work by using energy estimates \(\eqref{energy}\), \(\eqref{estimate1}\) and \(\eqref{estimate2}\). In Lemma \(\ref{3.3.}\), the main term of the right-hand side of \(\eqref{estimate1}\) is
\begin{align*}
\left| \int_{0}^{t}\int_{\calB\left( s\right) } \left( \square_{g}\phi\right) \left( Q\phi\right)  dxds\right|,
\end{align*}
and the ramainder terms are naturally controlled by the right-hand side \(\eqref{5.5}\). For the above term with \(\phi=B^{k+1}\varepsilon\), we need consider all possible case of \(\square_{g}B^{k+1}\varepsilon\) stated in Lemma \(\ref{3.9.}\). Similarly in Lemma \(\ref{3.2.}\), for the right-hand side of \(\eqref{energy}\) with \(\phi=B^{k}u\), what we need to controll is
\begin{align*}
\left| \int_{0}^{t}\int_{\partial\calB(\tau)}F_{k}\left( B^{k+1}u\right) dSd\tau\right| +\left| \int_{0}^{t}\int_{\calB(s)}G_{k}\left( B^{k+1}u\right) dxds\right| .
\end{align*}
To simplify notation we define as
\begin{align*}
\calE_{k}(t):=\underline{E}_{\le l+1}[\varepsilon,t]+ E_{\le l}[u,t].
\end{align*}
We first consider two special cases intorduced in the following Lemma.
\begin{lemma}\label{5.8.}
	Suppose the bootstrap assumption \(\eqref{5.4}\) hold. Then there exist some positive constants \(C_{0}\), \(C_{1}(\bar{\rho})\), \(C_{2}(\bar{\rho})\), \(C_{h}\) for any \(k\le l\) and \(t\in[0,T]\), we have
	\begin{align}\label{5.32}
	\int_{0}^{t}\int_{\calB(\tau)}|\nabla^{2}B^{k}\varepsilon|^{2}dxd\tau\le C_{0}\scE_{l}(0)+\int_{0}^{t}C_{1}(\bar{\rho})\scE_{l}(s)+C_{2}(\bar{\rho})\calE_{l-1}(s)+C_{h}\scE_{l}^{\frac{3}{2}}(s)ds.
	\end{align}
\end{lemma}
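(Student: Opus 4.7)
} Because $\varepsilon|_{\partial\calB(t)}=0$ and $B=\partial_t+u^a\partial_a$ is tangent to $\partial\calB(t)$ by the kinematic boundary condition, a trivial induction on $k$ gives $B^k\varepsilon|_{\partial\calB(t)}=0$ for every $k\ge 0$. Applying \lem{3.6.} to $\phi=B^k\varepsilon$, which has vanishing Dirichlet trace, I reduce the claim to bounding $\int_0^t\|\Delta B^k\varepsilon\|_{L^2(\calB_\tau)}^2\,d\tau$ by the right-hand side of \eqref{5.32}.

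To bound $\Delta B^k\varepsilon$ I apply \lem{3.5.} with $\phi=B^k\varepsilon$, which yields
\[
c_s^2\Delta B^k\varepsilon=\square_g B^k\varepsilon+B^{k+2}\varepsilon-(3c_s^{-1}c'_s+1)B\bm{\uprho}\cdot B^{k+1}\varepsilon+c_sc'_s\delta^{ab}(\partial_a\bm{\uprho})(\partial_b B^k\varepsilon),
\]
and then substitute $\square_g B^k\varepsilon$ via the first line of \eqref{2.34} when $k=0$ or via \lem{3.9.} with index $k-1$ when $k\ge 1$. Splitting $\bm{\uprho}=\bar{\bm{\uprho}}+\varepsilon$ in every stationary coefficient and using $B\bar{\bm{\uprho}}=0$ throughout, I sort the right-hand side into four types: (a) the time-like top-order piece $B^{k+2}\varepsilon$, whose $L^2(\calB_\tau)$-norm is dominated by $\underline{E}_{k+1}[\varepsilon,\tau]\le\scE_l(\tau)$ since $k\le l$; (b) top-order linear pieces whose stationary coefficient is proportional to $(c_s^2+c_sc'_s)\nabla\bar{\bm{\uprho}}$ and which multiply $\nabla B^k\varepsilon$, $\nabla B^{k-1}B\varepsilon$, or $\nabla B^{k-1}u$, contributing $C_1(\bar\rho)\scE_l(\tau)$ after squaring; (c) strictly lower-derivative linear terms, including the stationary $\nabla^{(2)}\bar{\bm{\uprho}}$ and $\Delta\bar{\bm{\uprho}}$ pieces coming from $H_{k-1}$ and $\mathscr{Q}$, which contribute $C_2(\bar\rho)\calE_{l-1}(\tau)$; and (d) genuinely quadratic-or-higher pieces in $(u,\varepsilon)$, controlled pointwise by \lem{5.6.} and absorbed into $C_h\scE_l^{3/2}(\tau)$.

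The gravitational sources arising from \lem{3.9.} need separate care: the bulk term $B^{k+1}\Delta\psi$ equals $4\pi B^{k+1}\rho$ by the Poisson equation, so decomposing $\rho=\bar\rho(1+O(\varepsilon))$ slots it into categories (b)/(c)/(d); the mixed terms $B^{m_1}\nabla\nabla\psi$ are converted to boundary quantities via the Hilbert-transform representation of \lem{4.2.} and then estimated using the commutator bound of \prop{4.4.} together with elliptic regularity on a stand-in potential, in exact parallel with the treatment of \eqref{5.18}--\eqref{5.22} in the proof of \prop{5.2.}. Squaring $\|\Delta B^k\varepsilon\|_{L^2(\calB_\tau)}^2$, integrating in $\tau\in[0,t]$, and using Cauchy--Schwarz to dispose of cross-terms then produces \eqref{5.32}.

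\emph{Main obstacle.} The delicate point is category (b): I have to isolate exactly those top-order pieces whose coefficient is the specific combination $(c_s^2(\bar{\bm{\uprho}})+c_s(\bar{\bm{\uprho}})c'_s(\bar{\bm{\uprho}}))\nabla\bar{\bm{\uprho}}\sim\gamma\bar\rho^{\gamma-1}\nabla\bar{\bm{\uprho}}$ announced in \prop{5.1.}, so that the resulting $C_1(\bar\rho)$ matches the one later dominated by the fastest linear growth rate $\sqrt{\mu_0}$ in Section~\ref{6}. Sloppy bookkeeping, for instance mis-classifying a $\partial\bar{\bm{\uprho}}\cdot\partial B^k\varepsilon$ contribution as lower order, would either spawn a spurious top-order linear term with the wrong coefficient (breaking the closure of the bootstrap) or would conceal a top-order contribution inside the $\calE_{l-1}$-bucket. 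Carefully tracking, within the source $H_{k-1}$ of \lem{3.9.}, which $B^{k_{m+1}}\bm{\uprho}$ factors collapse to $B^{k_{m+1}}\varepsilon$ via $B\bar{\bm{\uprho}}=0$ and which retain a stationary $\bar{\bm{\uprho}}$ dependence is therefore the heart of the argument.
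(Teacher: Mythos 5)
Your proposal follows essentially the same route as the paper's proof: reduce $\|\nabla^{2}B^{k}\varepsilon\|_{L^{2}}$ to $\|\Delta B^{k}\varepsilon\|_{L^{2}}$ via the Dirichlet elliptic estimate of \lem{3.6.} (using $B^{k}\varepsilon|_{\partial\calB}=0$), convert $\Delta B^{k}\varepsilon$ to $\square_{g}B^{k}\varepsilon$ plus lower-order terms via \lem{3.5.}, substitute the source $H_{k-1}$ from \lem{3.9.} (equation \eqref{2.20} for $k=0$), and handle the gravitational contributions by the Hilbert-transform machinery of Section~\ref{4} as in \eqref{5.18}--\eqref{5.22}. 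Your finer bookkeeping of the source into the categories (a)--(d) and the explicit identification of the top-order coefficient $C_{1}(\bar\rho)$ is consistent with (indeed more detailed than) the paper's argument, so the proposal is correct.
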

\begin{proof}
	We proceed inductively. For \(k=0\) this follows for instance by \(\eqref{3.20}\) and \(\eqref{2.20}\) using the elliptic estimates. Iductively, suppose the statement of the lemma holds for \(k\le j-1\le l-1\) and let us prove it for \(k=j\). Applying Lemma \(\ref{3.5.}\) and elliptic estimates with \(\phi=B^{j}\varepsilon\), we need to bound the \(\Arrowvert H_{j-1}\Arrowvert^{2} _{L^{2}(\calB_{s})}\), with \(H_{j-1}\) as in Lemma \(\ref{3.9.}\). The contribution from line \(\eqref{i1},\eqref{i2},\eqref{i3},\eqref{i4},\eqref{i5}\) and \(\eqref{i7}\) in Lemma \(\ref{3.9.}\) can be controlled by the energy \(\scE_{l}\) (the top order terms are \(\nabla^{2}B^{k-2}u\ and\ \nabla^{2}B^{k-1}\varepsilon\)). Then we consider the contribution of line \(\eqref{i6}\) and focus on \(\Arrowvert B^{k}\nabla\nabla\psi\Arrowvert^{2}_{L^{2}(\calB_{s})}\) in particular. Using similar processing methods like \(\eqref{5.18}-\eqref{5.22}\), \(\Arrowvert B^{k-1}\nabla\nabla\psi\Arrowvert^{2}_{L^{2}(\calB_{s})}\) can be bounded by
	\begin{align}
	\sum_{i\le k-2}\int_{\calB(s)}|\nabla^{2}B^{i}u|^{2}dx+\sum_{i\le k-1}\int_{\calB(s)}\left(|\nabla B^{i}\varepsilon|^{2} +|\nabla B^{i}u|^{2}\right) dx,
	\end{align}
	which completes the proof of Lemma \(\ref{5.8.}\).
\end{proof}
The next lemma allows us to estimate the \(L^{2}(\partial\calB_{s})\) norm of \(\nabla B^{k-1}u\).
\begin{lemma}\label{5.9.}
	Suppose the bootstrap assumption \(\eqref{5.4}\) hold. Then there exist some positive constants \(C_{0}\), \(C_{1}(\bar{\rho})\), \(C_{2}(\bar{\rho})\), \(C_{h}\) for any \(k\le l-1\) and \(t\in[0,T]\), we have
	\begin{align}\label{5.34}
	\int_{0}^{t}\int_{\partial\calB(\tau)}|\nabla B^{k}u|^{2}dSd\tau\le C_{0}\scE_{l}(0)+\int_{0}^{t}C_{1}(\bar{\rho})\scE_{l}(s)+C_{2}(\bar{\rho})\calE_{l-1}(s)+C_{h}\scE_{l}^{\frac{3}{2}}(s)ds.
	\end{align}
\end{lemma}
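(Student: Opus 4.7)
The plan is to apply the wave-type energy estimate of Lemma \ref{3.4.} to $\phi = B^{k}u$ for each $k\le l-1$, because its left-hand side contains exactly the spacetime boundary integral $\int_{0}^{T}\int_{\partial\calB(t)}|\partial_{t,x}B^{k}u|^{2}dSdt$, which dominates the quantity $\int_{0}^{t}\int_{\partial\calB(\tau)}|\nabla B^{k}u|^{2}dSd\tau$ we wish to control. We proceed inductively on $k$, as in the proof of Lemma \ref{5.8.}: the base case $k=0$ follows directly from Lemma \ref{3.4.} together with the wave equation \eqref{2.15} and the boundary relation \eqref{2.5}; for the inductive step we assume the statement through $k-1$ and prove it for $k$.

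The resulting right-hand side of Lemma \ref{3.4.} splits into four groups. First, the initial energy $\Arrowvert\partial_{t,x}B^{k}u\Arrowvert^{2}_{L^{2}(\calB_{0})}$ is absorbed into $C_{0}\scE_{l}(0)$. Second, the spacetime bulk integral $|\int_{0}^{T}\int_{\calB}(\square_{g}B^{k}u)(QB^{k}u)dxd\tau|$ is handled by invoking Lemma \ref{3.7.}, which expands $\square_{g}B^{k}u=G_{k}$ as a sum of three types of terms: the top-order linear terms (those containing a single factor of $\nabla\bar{\bm{\uprho}}$ multiplying $\nabla B^{k}u$ or $\nabla B^{k+1}\varepsilon$) are absorbed into $C_{1}(\bar{\rho})\scE_{l}(s)$; the strictly lower-order linear terms go into $C_{2}(\bar{\rho})\calE_{l-1}(s)$; and the genuinely nonlinear terms of type (\ref{j2}) are estimated by the Sobolev-type pointwise bounds of Lemma \ref{5.6.}, giving the $C_{h}\scE_{l}^{3/2}(s)$ contribution. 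The gravitational source $B^{k_{m+1}}\nabla\psi$ appearing in (\ref{j3}) is handled via the Hilbert-transform approach of Section \ref{4}: by Lemma \ref{4.2.} we rewrite $\nabla\psi$ on $\partial\calB(t)$ as $\frac{1}{2}(I-H_{\partial\calB_{t}})\nabla\Phi$ for an auxiliary function $\Phi$ vanishing on $\partial\calB(t)$, and then use Proposition \ref{4.4.} to commute $B^{k}$ with $H_{\partial\calB_{t}}$, trading top-order derivatives of $\nabla\psi$ for quantities already contained in $\scE_{l}$ via repeated application of the elliptic estimates of Lemma \ref{3.6.}.

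Third, and most delicately, the boundary integral $\int_{0}^{T}\int_{\partial\calB(t)}((nB^{k}u)^{2}+(BB^{k}u)^{2})dSdt$ on the right-hand side of \eqref{estimate2} must be controlled. The $(BB^{k}u)^{2}=|B^{k+1}u|^{2}$ contribution is already inside $E_{k}[u,t]\subseteq\scE_{l}(t)$ (through the $\frac{1}{a}|Bu|^{2}$ boundary term), hence integrates to $\int_{0}^{t}\scE_{l}(s)ds$. For $(nB^{k}u)^{2}$ we use the boundary equation from Lemma \ref{3.7.},
\begin{align*}
anB^{k}u=-B^{k+2}u-c_{s}^{2}\nabla B^{k+1}\varepsilon+F_{k},\quad\text{on}\quad\partial\calB(t),
\end{align*}
and the positivity of $a$ (guaranteed by the Taylor sign condition \eqref{Taylor sign}). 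The term $B^{k+2}u$ on $\partial\calB(t)$ sits inside $E_{k+1}[u,t]\subseteq\scE_{l}(t)$ since $k+1\le l$. The crucial observation for $\nabla B^{k+1}\varepsilon$ is that $B^{k+1}\varepsilon$ vanishes on $\partial\calB(t)$ (since $B$ is tangent to the free boundary and both $\varepsilon$ and $B\varepsilon$ vanish there), so its tangential derivatives also vanish and $\nabla B^{k+1}\varepsilon=n\cdot(nB^{k+1}\varepsilon)$ on $\partial\calB(t)$. The trace theorem followed by elliptic regularity with vanishing Dirichlet data yields
\begin{align*}
\Arrowvert nB^{k+1}\varepsilon\Arrowvert^{2}_{L^{2}(\partial\calB_{t})}\lesssim\Arrowvert B^{k+1}\varepsilon\Arrowvert^{2}_{H^{2}(\calB_{t})}\lesssim\Arrowvert\Delta B^{k+1}\varepsilon\Arrowvert^{2}_{L^{2}(\calB_{t})},
\end{align*}
and integrating in time and invoking Lemma \ref{5.8.} (with $k$ replaced by $k+1\le l$) bounds this term by the desired right-hand side. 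The remainder $F_{k}$ is split exactly as $G_{k}$ above into top-order, lower-order, nonlinear, and gravitational pieces. Finally, the bulk commutator terms in the last line of \eqref{estimate2} are routine, their coefficients being derivatives of the acoustical metric and hence controlled pointwise by $\scE_{l}$, contributing to $\int_{0}^{t}\scE_{l}(s)ds$.

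The main obstacle is the simultaneous treatment of (i) the top-order linear terms whose coefficients depend on $\bar{\rho}$ (which must be isolated so that, in the sequel Section \ref{6}, they can be dominated by the linear growth rate $\mu_{0}$), and (ii) the nonlocal gravitational source $B^{k+1}\nabla\psi$ on the boundary, whose bound hinges on the Hilbert-transform commutator estimate of Proposition \ref{4.4.} together with Lemma \ref{4.2.}. Everything else is bookkeeping via the higher-order equations of Section \ref{3} and the Sobolev embedding provided by Lemma \ref{5.6.}.
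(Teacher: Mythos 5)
Your proof follows essentially the same route as the paper's: apply Lemma \ref{3.4.} to $\phi=B^{k}u$, control the boundary term $nB^{k}u$ via the identity $anB^{k}u=-B^{k+2}u-c_{s}^{2}\nabla B^{k+1}\varepsilon+F_{k}$ from Lemma \ref{3.7.}, bound $\nabla B^{k+1}\varepsilon$ on the boundary through the trace theorem and Lemma \ref{5.8.}, and treat $G_{k}$ and the gravitational source via the structure of Lemma \ref{3.8.} and the Hilbert-transform machinery of Section \ref{4}. The only slip is a citation: the expansion $\square_{g}B^{k}u=G_{k}$ is Lemma \ref{3.8.}, not Lemma \ref{3.7.}.
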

\begin{proof}
	For \(k=0\) this follows for instance from the trace theorem. Proceeding inductively, we assume \(\eqref{5.34}\) holds for \(k\le j-1\le l-2\) and prove it for \(k=j\). Applying Lemma \(\ref{3.4.}\) with \(\phi=B^{j}u\), what term we need to process is 
	\begin{align*}
	\int_{0}^{t}\int_{\calB(s)}|G_{j}|^{2}dxds+\int_{0}^{t}\int_{\partial\calB(\tau)}|nB^{j}u|^{2}dSd\tau,
	\end{align*}
	and the raminder terms are bounded by the right-hand side of \(\eqref{5.34}\). For the contribution \(nB^{j}u\) on \(\calB\) we apply Lemma \(\ref{3.7.}\) to write
	\begin{align}\label{5.35}
	-nB^{j}u=\frac{1}{a}BB^{j+1}u+\frac{1}{a}\nabla B^{j+1}\varepsilon-\frac{1}{a}F_{j}.
	\end{align}
	The first two terms on the right-hand side of \(\eqref{5.35}\) are bounded by using the trace theorem and Lemma \(\ref{5.8.}\). The contributin of \(F_{j}\) can be bounded by the right-hand side of \(\eqref{5.34}\) using the Hilbert transform and the elliptic estimates. Finally we consider the contribution of \(G_{j}\). The line \(\eqref{j1}\) and \(\eqref{j2}\) in Lemma \(\ref{3.8.}\) are bounded by
	\begin{align}\label{5.36}
	\sum_{i\le j}\int_{0}^{t}\int_{\calB(s)}|\nabla B^{i}\varepsilon|^{2}dxds+\sum_{i\le j}\int_{0}^{t}\int_{\calB(s)}|\nabla B^{i}u|^{2}dxds+\sum_{i\le j-1}\int_{0}^{t}\int_{\calB(s)}|\nabla^{2} B^{i}u|^{2}dxds,
	\end{align}
	hence by the right-hand side of \(\eqref{5.34}\). For the term \(B^{j+1}\nabla\psi\) from line \(\eqref{j3}\) in \(G_{j}\), we actually consider
	\begin{align}
	\int_{0}^{t}\int_{\calB(s)}|B^{j}\nabla_{t,x}\nabla\psi |^{2}dxds,
	\end{align}
	which is bounded by \(\eqref{5.34}\) in the same way as in the treatment of \(\eqref{5.18}-\eqref{5.22}\).
\end{proof}
We turn to the proof of Proposition \(\ref{5.1.}\)
\begin{proof}[Proof of Proposition \(\eqref{5.1.}\)]
	We proceed inductively on \(k\) to show the pirori estimete \(\eqref{5.5}\), that is, there exist some positive constants \(C_{0}\), \(C_{1}(\bar{\rho})\), \(C_{2}(\bar{\rho})\), \(C_{h}\) for any \(j\le l\) and \(t\in[0,T]\)
	\begin{align}\label{5.38}
	\underline{E}_{\le j+1}[\varepsilon,t]+E_{\le j}[u,t]\le C_{0}\scE_{l}(0)+\int_{0}^{t}C_{1}(\bar{\rho})\scE_{l}(s)+C_{2}(\bar{\rho})\calE_{l-1}(s)+C_{h}\scE_{l}^{\frac{3}{2}}(s)ds.
	\end{align}
	For small enough \(j\), the estimate \(\eqref{5.38}\) is clearly true. Now we assume that \(\eqref{5.38}\) holds for \(j\le k-1\le l-1\) and show it for \(j=k\).\\
	{\bf Step\ 1:} First we show that
	\begin{align}\label{5.39}
	\underline{E}_{\le k+1}[\varepsilon,t]\le C_{0}\scE_{l}(0)+\int_{0}^{t}C_{1}(\bar{\rho})\scE_{l}(s)+C_{2}(\bar{\rho})\calE_{l-1}(s)+C_{h}\scE_{l}^{\frac{3}{2}}(s)ds.
	\end{align}
	As we have discussed before, the method is to apply energy estimate \(\eqref{estimate1}\) with \(\phi=B^{k+1}\varepsilon\) to equation \(\eqref{3.30}\), which can be done because \(B^{k+1}\varepsilon=0\) on \(\partial\calB\). It is obvious that the main term of the right-hand side of \(\eqref{estimate1}\) is
    \begin{align*}
    \left| \int_{0}^{t}\int_{\calB\left( s\right) } H_{k} \left( QB^{k+1}\varepsilon\right)  dxds\right|,
    \end{align*}
	where \(H_{k}\) is as in Lemma \(\ref{3.9.}\). If \(H_{k}\) is of the form \(\eqref{i1},\eqref{i2},\eqref{i3},\eqref{i5}\) and \(\eqref{i7}\) in Lemma \(\ref{3.9.}\), then we can use Cauchy-Schwarz to bound above contribution by
	\begin{align*}
	\sum_{i\le k}\int_{0}^{t}\int_{\calB(s)}|\nabla B^{i}u|^{2}dxds+\sum_{i\le k+1}\int_{0}^{t}\int_{\calB(s)}|\nabla B^{i}\varepsilon|^{2}dxds+\sum_{i\le k}\int_{0}^{t}\int_{\calB(s)}|\nabla^{2} B^{i}\varepsilon|^{2}dxds.
	\end{align*}
	According to Lemma \(\ref{5.8.}\) the above three terms are bounded by the right-hand side of \(\eqref{5.39}\). If \(H_{k}\) is of the form \(\eqref{i6}\) in Lemma \(\ref{3.9.}\) we use a similar approach like \(\eqref{5.18}-\eqref{5.22}\). It is worth noting that we should use Lemma \(\eqref{3.6.}\) first to bound \(\Arrowvert B^{k}\nabla\psi\Arrowvert_{H^{1}\left( \calB_{s}\right) }\). Because we can bound \(\Arrowvert\nabla B^{k-1}u\Arrowvert_{L^{2}\left( \partial\calB_{s}\right) }\) rather than \(\Arrowvert\nabla^{2}B^{k-1}u\Arrowvert_{L^{2}\left( \calB_{s}\right) }\) according to Lemma \(\eqref{5.9.}\). Finally for the contribution from line \(\eqref{i4}\) in Lemma \(\ref{3.9.}\) we use a few integration by parts. We treat the most difficult case when \(k_{m+2}=k-1\), and write the resulting expression as
	\begin{align}
	\tilde{F}^{ab}\nabla_{a}\nabla_{b}B^{k-1}u=\tilde{F}^{ab}g_{a\beta}g^{\alpha\beta}\nabla_{\alpha}\nabla_{b}B^{k-1}u:=F^{\alpha b}\nabla_{\alpha}\nabla_{b}B^{k-1}u.
	\end{align}
	Replacing \(H_{k}\) by this expression and making some simple calculations, we have
	\begin{align}\label{5.41}
	\begin{split}
	(F^{\alpha b}\nabla_{\alpha}\nabla_{b}B^{k-1}u)(QB^{k+1}\varepsilon)=&\nabla_{\lambda}[(F^{\lambda b}\nabla_{b}B^{k-1}u)(QB^{k+1}\varepsilon)-(F^{\alpha b}\nabla_{b}B^{k-1}u)(B^{\lambda}Q\nabla_{\alpha}B^{k}\varepsilon)]\\
	&-(\nabla_{\lambda}F^{\lambda b})(\nabla_{b}B^{k-1}u)(QB^{k+1}\varepsilon)-(F^{\lambda b}\nabla_{b}B^{k-1}u)(\nabla_{\lambda}Q^{\alpha})(\nabla_{\alpha}B^{k+1}\varepsilon)\\
	&+(BF^{\alpha b})(\nabla_{b}B^{k-1}u)(Q\nabla_{\alpha}B^{k}\varepsilon)+(F^{\alpha b}\nabla_{b}B^{k}u)(Q\nabla_{\alpha}B^{k}\varepsilon)\\
	&-(\nabla_{b}B^{\lambda})(F^{\alpha b}\nabla_{\lambda}B^{k-1}u)(Q\nabla_{\alpha}B^{k}\varepsilon)+(F^{\alpha b}\nabla_{b}B^{k-1}u)(BQ^{\lambda})(\nabla_{\lambda}\nabla_{\alpha}B^{k}\varepsilon)\\
	&+(F^{\alpha b}\nabla_{b}B^{k-1}u)(Q^{\lambda}[B,\nabla_{\lambda}\nabla_{\alpha}]B^{k}\varepsilon)+(F^{\alpha b}\nabla_{b}B^{k-1}u)(\nabla_{\lambda}B^{\lambda})(Q\nabla_{\alpha}B^{k}\varepsilon).
	\end{split}
	\end{align}
	Except for the first line, the other terms can be bounded by the same arguments as above. For the first line we integrate by parts. The resulting on \(\calB_{t}\) involve at most one top order term, hence these terms can be bounded by using Cauchy-Schwarz inequality with a small constant and using the induction hypothesis. Finally, since \(B\) is tangential to \(\partial\calB\), the integration of boundary term can be bounded by Cauchy-Schwarz inequality with a small constant as well. This finishes the proof of \(\eqref{5.39}\).\\
	{\bf Step\ 2:} Here we show that
	\begin{align}\label{5.42}
	\left| \int_{0}^{t}\int_{\calB(s)}G_{k}\left( B^{k+1}u\right) dxds\right|\le C_{0}\scE_{l}(0)+\int_{0}^{t}C_{1}(\bar{\rho})\scE_{l}(s)+C_{2}(\bar{\rho})\calE_{l-1}(s)+C_{h}\scE_{l}^{\frac{3}{2}}(s)ds.
	\end{align}
	where \(G_{k}\) is as in Lemma \(\ref{3.8.}\). The form \(\eqref{j1}\) and \(\eqref{j3}\) can be bounded in the same way as in Step 1 above. For the form \(\eqref{j2}\) of \(G_{k}\), we also treat the hardest case when \(k_{m+1}=k-1\), and express \(G_{k}\) as
	\begin{align*}
	\tilde{G}^{ab}\nabla_{a}\nabla_{b}B^{j-1}u=\tilde{G}^{ab}g_{a\beta}g^{\alpha\beta}\nabla_{\alpha}\nabla_{b}B^{j-1}u:=G^{\alpha b}\nabla_{\alpha}\nabla_{b}B^{j-1}u.
	\end{align*}
	Then we have
	\begin{align}\label{5.43}
	\begin{split}
	(G^{\alpha b}\nabla_{\alpha}\nabla_{b}B^{j-1}u)(B^{j+1}u)=&\nabla_{\lambda}[(G^{\lambda b}\nabla_{b}B^{j-1}u)(B^{j+1}u)-(G^{\alpha b}\nabla_{b}B^{j-1}u)(B^{\lambda}\nabla_{\alpha}B^{j}u)]\\
	&-(\nabla_{\alpha}G^{\alpha b})(\nabla_{b}B^{j-1}u)(B^{j+1}u)+(BG^{\alpha b})(\nabla_{b}B^{j-1}u)(\nabla_{\alpha}B^{j}u)\\
	&+(G^{\alpha b}\nabla_{b}B^{j}u)(\nabla_{\alpha}B^{j}u)-(G^{\alpha b}\nabla_{\lambda}B^{j-1}u)(\nabla_{b}B^{\lambda})(\nabla_{\alpha}B^{j}u)\\
	&+(G^{\alpha b}\nabla_{b}B^{j-1}u)(\nabla_{\lambda}B^{\lambda})(\nabla_{\alpha}B^{j}u)-(G^{\alpha b}\nabla_{b}B^{j-1}u)(\nabla_{\alpha}B^{\lambda})(\nabla_{\lambda}B^{j}u).
	\end{split}
	\end{align}
	Integrating \(\eqref{5.43}\) over \(\cup_{s\in[0,t]}\calB(s)\), the contribution of the right-hand side of \(\eqref{5.43}\) can be bounded by using integration by parts and Cauchy-Schwarz inequality as well as in the treatment of \(H_{k}\). This completes the proof of \(\eqref{5.42}\).\\
	{\bf Step\ 3:} Then we show that
	\begin{align}\label{5.44}
		E_{\le k}[u,t]\le C_{0}\scE_{l}(0)+\int_{0}^{t}C_{1}(\bar{\rho})\scE_{l}(s)+C_{2}(\bar{\rho})\calE_{l-1}(s)+C_{h}\scE_{l}^{\frac{3}{2}}(s)ds.
	\end{align}
	We apply the Lemma \(\ref{3.2.}\) with \(\phi=B^{k}u\). we note that except for the integration of boundary term
	\begin{align}
	\int_{0}^{t}\int_{\partial\calB(\tau)}F_{k}\left(B^{k+1}u\right)dSd\tau ,
	\end{align}
	 all other terms on the right-hand side of \(\eqref{energy}\) are bounded by using Lemma \(\ref{estimate1}\) and \(\eqref{5.42}\). The contribution of the form \(\eqref{k1}\) and \(\eqref{k2}\) in Lemma \(\ref{3.7.}\) are bounded by using trace theorem, \(\eqref{5.32}\), \(\eqref{5.34}\) and induction hypothesis. For the terms of the form \(\eqref{k3}\), we consider the most difficult case
	 \begin{align}
	 \int_{0}^{t}\int_{\partial\calB(\tau)}|B^{k}\nabla_{t,x}\nabla\psi|^{2}dSd\tau,
	 \end{align}
	 which is bounded in the same way as in the treatment of  \(\eqref{5.20}-\eqref{5.22}\).Note that \(\eqref{5.6}\), \(\eqref{5.39}\) and \(\eqref{5.44}\) complete the proof of the Proposition \(\ref{5.1.}\).\\
\end{proof}

\section{Nonlinear instability}\label{6}
Here we describe how to close the bootstrap argument in Lemma \(\ref{1.4.}\) from linear instability to nonlinear instability. As discussed before, we know that the emergence of nonlinear instability requires the stronger norm to be controlled reversely by the weaker norm, that is, we need to prove that there exists small enough \(\epsilon>0\) and constant \(C_{\epsilon}>0\), such that
\begin{align}\label{6.1}
\int_{0}^{t}C_{1}(\bar{\rho})\scE_{l}(s)+C_{2}(\bar{\rho})\calE_{l-1}(s) ds\le\int_{0}^{t}\epsilon\mu_{0}\scE_{l}(s)+C_{\epsilon}\left(\Arrowvert \varepsilon \Arrowvert^{2}_{L^{2}(\calB_{s})}+\Arrowvert u \Arrowvert^{2}_{L^{2}(\calB_{s})} \right) ds.
\end{align}
 We recall the conclusion of reference \cite{L}, when \(1\le\gamma<\frac{4}{3}\) and stars with large central density, liquid Lane-Emden stars are linearly unstable, and the fastest linear growth mode \(\mu_{0}\) in increases with the increase of center density (that is, \(\mu_{0}\) is related to steady state \(\bar{\rho}\)). Observing the structure of \(F_{k},G_{k},H_{k}\), we notice that the coefficients of linear term with the hightest derivative of \(\varepsilon\) and \(u\) are dependent on the steady state \(\bar{\rho}\), and roughly \(C_{1}(\bar{\rho})\) increases as the steady state \(\bar{\rho}\) increases. Therefore, we need to obtain the exact magnitude relationship between the linear term coefficient \(C_{1}(\bar{\rho})\) and the linear growth mode \(\mu_{0}\), which is the main goal of this section. In \cite{L}, the author transformed \(\eqref{linearized eq SS}\) into a Sturm-Liouville type equation with Robin type boundary condition, and then proved the existence of the fastest linear growth mode \(\mu_{0}\). We outline the results here. By setting \(\zeta(y,t)=e^{\lambda t}\chi(y)\) we rewrite \(\eqref{linearized eq SS}\) as
\begin{align}\label{5.48}
\begin{split}
L\chi:=-\gamma\partial_{y}(\bar{\rho}^{\gamma}y^{4}\partial_{y}\chi)+(4-3\gamma)y^{3}\chi\partial_{y}\bar{\rho}^{\gamma}=&-\lambda^{2}y^{4}\bar{\rho}\chi \\
3\chi(R)+R\partial_{y}\chi(R)=&0.
\end{split}
\end{align}
Given \(\chi\in C^{2}([0,R])\) satisfying the boundary condition of \(\eqref{5.48}\), we use integration by parts
\begin{align}\label{111}
\langle L\chi,\chi\rangle=&\int_{0}^{R}\gamma\bar{\rho}^{\gamma}y^{4}(\partial_{y}\chi)^{2}+(4-3\gamma)y^{3}\chi^{2}\partial_{y}\bar{\rho}^{\gamma}dy+3\gamma R^{3}\chi(R)^{2},
\end{align}
where \(\left\langle \cdot\right\rangle \) represent the usual \(L^{2}\) inner product. Let \(H^{1}_{r}(B_{R}(\bbR^{5}))\) denote the subspace of spherically symmetric functions in \(H^{1}(B_{R}(\bbR^{5}))\). We can conclude
\begin{align}\label{5.49}
\inf_{\Arrowvert\chi\Arrowvert_{y^{4}\bar{\rho}}=1}\langle L\chi,\chi\rangle:=\mu_{\ast}=\inf[\mu:\exists\chi\ne 0 s.t. L\chi=\mu y^{4}\bar{\rho}\chi],
\end{align}
where \(\Arrowvert\chi\Arrowvert^{2}_{\omega}=\langle \chi,\omega\chi\rangle \). Moreover there exists \(\chi_{\ast}\in H^{1}_{r}(B_{R}(\bbR^{5}))\) that allows the infimum to be reached. This shows that if there exist \(\chi\) such that \(\langle L\chi,\chi\rangle <0\), then by \(\eqref{5.49}\) there exist \(-\mu_{0}<0\) and \(\chi_{\ast}\) such that \(L\chi_{\ast}=-\mu_{0}y^{4}\bar{\rho}\chi_{\ast}\). The following we need to estimate the magnitude of \(\mu_{0}\). We know that the family of gaseous steady states are self-similar, so that the family is given by \(\bar{\rho}_{\kappa}(y)=\kappa\bar{\rho}_{\ast}(\kappa^{1-\gamma/2}y)\) where \(\bar{\rho}_{\ast}\) is a steady state. The corresponding liquid star has \(R_{\kappa}=\kappa^{-(1-\gamma/2)}\bar{\rho}^{-1}_{\ast}(1/\kappa)\). We deal with three cases individually.\\
{\bf Case\ 1:} \(\frac{6}{5}<\gamma<\frac{4}{3}\)

With \(\eqref{111}\), we have
\begin{align}
\begin{split}
\langle L_{\kappa}\chi,\chi\rangle=&\int_{0}^{R_{\kappa}}\gamma\bar{\rho}_{\kappa}^{\gamma}y^{4}(\partial_{y}\chi)^{2}+(4-3\gamma)y^{3}\chi^{2}\partial_{y}\bar{\rho}_{\kappa}^{\gamma}dy+3\gamma R_{\kappa}^{3}\chi(R_{\kappa})^{2}\\
=&R_{\kappa}^{3}k^{\gamma}\int_{0}^{1}\gamma\bar{\rho}_{\ast}(\bar{\rho}_{\ast}^{-1}(1/\kappa)z)^{\gamma}z^{d+1}(\partial_{z}\tilde{\chi})^{2}+(4-3\gamma)z^{d}\bar{\rho}_{\ast}^{-1}(1/\kappa)\tilde{\chi}^{2}(\bar{\rho}_{\ast}^{\gamma})'(\bar{\rho}_{\ast}^{-1}(1/\kappa)z)dz+3\gamma R_{\kappa}^{3}\chi(R_{\kappa})^{2}.
\end{split}
\end{align}
When \(\gamma>\frac{6}{5}\), the gaseous steady state \(\bar{\rho}_{\ast}\) has compact support. Then \(\bar{\rho}_{\ast}^{-1}(1/\kappa)\to\bar{\rho}^{-1}_{\ast}(0)=R_{\ast}\) as \(\kappa\to\infty\). According to dominated convergence we have
\begin{align}\label{5.51}
\begin{split}
\langle L_{\kappa}1,1\rangle=&(4-3\gamma)R_{\kappa}^{3}\kappa^{\gamma}\bar{\rho}_{\ast}^{-1}(1/\kappa)\int_{0}^{1}z^{d}(\bar{\rho}_{\ast}^{\gamma})'(\bar{\rho}_{\ast}^{-1}(1/\kappa)z)dz+3\gamma R_{\kappa}^{3}\\
\sim&-\kappa^{\frac{5}{2}\gamma-3}\to-\infty\quad\quad as\quad\quad \kappa\to\infty,
\end{split}
\end{align}
and
\begin{align}\label{5.52}
\langle 1,y^{4}\bar{\rho}_{\kappa}1\rangle=\int_{0}^{R_{\kappa}}y^{4}\bar{\rho}_{\kappa}(y)dy=R_{\kappa}^{5}\kappa\int_{0}^{1}z^{4}\bar{\rho}_{\ast}(\bar{\rho}_{\ast}^{-1}(1/\kappa)z)dz\sim\kappa^{\frac{5}{2}-4}.
\end{align}
Combining \(\eqref{5.49}\), \(\eqref{5.51}\) and \(\eqref{5.52}\), we have \(\mu_{0}\sim\kappa\) when \(\kappa\) is large enough. Then we turn to consider the magnitude of \(C_{1}(\bar{\rho}_{\kappa})\). According to equations \(\eqref{2.33}\) and \(\eqref{2.34}\), we can obtain
\begin{align}\label{5.54}
C_{1}(\bar{\rho}_{\kappa})\sim\left( c_{s}^{2}(\bar{\bm{\uprho}}_{\kappa})+c_{s}(\bar{\bm{\uprho}}_{\kappa})c'_{s}(\bar{\bm{\uprho}}_{\kappa})\right)\nabla\bar{\bm{\uprho}}_{\kappa} \sim\gamma\bar{\rho}_{\kappa}^{\gamma-1}\nabla\bar{\bm{\uprho}}_{\kappa}\lesssim-\frac{1}{\bar{\rho}_{\kappa}}\partial_{y}\bar{\rho}^{\gamma}_{\kappa}.
\end{align}
In spherical symmetry, the steady state satisfies equation
\begin{align}\label{5.55}
\frac{4\pi}{y^{2}}\int_{0}^{y}s^{2}\bar{\rho}(s)ds+\frac{1}{\bar{\rho}}\partial_{y}\bar{\rho}^{\gamma}=0.
\end{align}
Let \(y=\nu R_{\kappa}\), where \(0<\nu\le 1\), we have
\begin{align}
|C_{1}(\bar{\rho}_{\kappa})|\lesssim\frac{1}{\nu^{2}R_{\kappa}^{2}}\int_{0}^{\nu R_{\kappa}}s^{2}\bar{\rho}_{\kappa}(s)ds=\nu R_{\kappa}\kappa\int_{0}^{1}z^{2}\bar{\rho}_{\ast}(\bar{\rho}_{\ast}^{-1}(1/\kappa)\nu z)dz\lesssim\kappa^{\frac{\gamma}{2}}.
\end{align}
Since \(\frac{\gamma}{2}<1\), there exists small enough \(\epsilon>0\) such that \(|C_{1}(\bar{\rho}_{\kappa})|\le \epsilon\mu_{0}\) when \(\kappa\) is large enough. \\
{\bf Case\ 2:} \(\gamma=\frac{6}{5}\)

From the explicit formula which has been proven in \cite{heinzle2002finiteness}, we have
\begin{align}\label{22}
\begin{split}
\bar{\rho}_{\kappa}(y)=&\left( \kappa^{-\frac{2}{5}}+\frac{2\pi}{9}\kappa^{\frac{2}{5}}y^{2}\right) ^{-\frac{5}{2}}\\
\partial_{y}\bar{\rho}_{\kappa}^{\gamma}(y)=&-\frac{4\pi}{3}\kappa^{\frac{2}{5}}y\left( \kappa^{-\frac{2}{5}}+\frac{2\pi}{9}\kappa^{\frac{2}{5}}y^{2}\right) ^{-4}\\
R_{\kappa}=&\frac{3}{\sqrt{2\pi}}\kappa^{-\frac{2}{5}}\left( \kappa^{\frac{2}{5}}-1\right) ^{\frac{1}{2}}.
\end{split}
\end{align}
From \(\eqref{111}\) we have
\begin{align}
\begin{split}
\langle L_{\kappa}1,1\rangle=&\int_{0}^{R_{\kappa}}(4-3\gamma)y^{3}\partial_{y}\bar{\rho}_{\kappa}^{\gamma}dy+3\gamma R_{\kappa}^{3}\\
=&-\frac{4\pi}{3}(4-3\gamma)\kappa^{\frac{2}{5}}\int_{0}^{R{\kappa}}y^{4}\left( \kappa^{-\frac{2}{5}}+\frac{2\pi}{9}\kappa^{\frac{2}{5}}y^{2}\right) ^{-4}dy+3\gamma R_{\kappa}^{3}\\
=&-\frac{4\pi}{3}(4-3\gamma)\kappa^{-\frac{3}{5}}\int_{0}^{\kappa^{\frac{1}{5}}R{\kappa}}z^{4}\left( \kappa^{-\frac{2}{5}}+\frac{2\pi}{9}z^{2}\right) ^{-4}dz+3\gamma R_{\kappa}^{3}\\
=&-3(4-3\gamma)\left( \frac{9}{2\pi}\right) ^{\frac{3}{2}}\kappa^{-\frac{3}{5}}\int_{\kappa^{-\frac{2}{5}}}^{\kappa^{-\frac{2}{5}}+\frac{2\pi}{9}\kappa^{\frac{2}{5}}R_{\kappa}^{2}}\frac{\left( s-\kappa^{-\frac{2}{5}}\right) ^{\frac{3}{2}}}{s^{4}}ds+3\gamma R_{\kappa}^{3}\\
\le&-3(4-3\gamma)\left( \frac{9}{4\pi}\right) ^{\frac{3}{2}}\kappa^{-\frac{3}{5}}\int_{2\kappa^{-\frac{2}{5}}}^{\kappa^{-\frac{2}{5}}+\frac{2\pi}{9}\kappa^{\frac{2}{5}}R_{\kappa}^{2}}s^{-\frac{5}{2}}ds+3\gamma R_{\kappa}^{3}\\
=&-2(4-3\gamma)\left( \frac{9}{4\pi}\right) ^{\frac{3}{2}}\kappa^{-\frac{3}{5}}\left( \left( 2\kappa^{-\frac{2}{5}}\right) ^{-\frac{3}{2}}-\left( \kappa^{-\frac{2}{5}}+\frac{2\pi}{9}\kappa^{\frac{2}{5}}R_{\kappa}^{2}\right) ^{-\frac{3}{2}}\right) +3\gamma R_{\kappa}^{3}\\
\to&-2(4-3\gamma)\left( \frac{9}{8\pi}\right) ^{\frac{3}{2}}\quad\quad as\quad\quad \kappa\to\infty,
\end{split}
\end{align}
and
\begin{align*}
\begin{split}
\langle 1,y^{4}\bar{\rho}_{\kappa}1\rangle=&\int_{0}^{R_{\kappa}}y^{4}\bar{\rho}_{\kappa}(y)dy
=\int_{0}^{R_{\kappa}}y^{4}\left( \kappa^{-\frac{2}{5}}+\frac{2\pi}{9}\kappa^{\frac{2}{5}}y^{2}\right) ^{-\frac{5}{2}}dy\\
=&\kappa^{-1}\int_{0}^{\kappa^{\frac{1}{5}}R_{\kappa}}z^{4}\left( \kappa^{-\frac{2}{5}}+\frac{2\pi}{9}z^{2}\right) ^{-\frac{5}{2}}dz
=\frac{9}{4\pi}\left( \frac{9}{2\pi}\right) ^{\frac{3}{2}}\kappa^{-1}\int_{\kappa^{-\frac{2}{5}}}^{\kappa^{-\frac{2}{5}}+\frac{2\pi}{9}\kappa^{\frac{2}{5}}R_{\kappa}^{2}}\frac{\left( s-\kappa^{-\frac{2}{5}}\right) ^{\frac{3}{2}}}{s^{\frac{5}{2}}}ds\\
\lesssim&\kappa^{-1}\int_{\kappa^{-\frac{2}{5}}}^{\kappa^{-\frac{2}{5}}+\frac{2\pi}{9}\kappa^{\frac{2}{5}}R_{\kappa}^{2}}s^{-1}ds=\kappa^{-1}\left( \log\left( \kappa^{-\frac{2}{5}}+\frac{2\pi}{9}\kappa^{\frac{2}{5}}R_{\kappa}^{2}\right) -\log\kappa^{-\frac{2}{5}}\right) \lesssim\frac{\log\kappa}{\kappa}.
\end{split}
\end{align*}
According to \(\eqref{5.49}\), we know \(\mu_{0}\sim \frac{\kappa}{\log\kappa}\) when \(\kappa\) is large enough. As discussed in case 1, we consider the relationship between the coefficient \(C_{1}(\bar{\rho}_{\kappa})\) and \(\kappa\). Using the explicit formula \(\eqref{22}\) and \(\eqref{5.54}\) we have
\begin{align*}
\left| C_{1}(\bar{\rho}_{\kappa})\right| \lesssim- \frac{1}{\bar{\rho}_{\kappa}}\partial_{y}\bar{\rho}_{\kappa}^{\gamma} =\frac{4\pi}{3}\kappa^{\frac{2}{5}}y\left( \kappa^{-\frac{2}{5}}+\frac{2\pi}{9}\kappa^{\frac{2}{5}}y^{2}\right) ^{-\frac{3}{2}}\le\max_{0\le\nu\le 1}\frac{4\pi}{3}\kappa^{\frac{2}{5}}\nu R_{\kappa}\left( \kappa^{-\frac{2}{5}}+\frac{2\pi}{9}\kappa^{\frac{2}{5}}\nu^{2}R_{\kappa}^{2}\right) ^{-\frac{3}{2}}.
\end{align*}
Naturally we define the function
\begin{align*}
f(\nu):=\kappa^{\frac{2}{5}}\nu R_{\kappa}\left( \kappa^{-\frac{2}{5}}+\frac{2\pi}{9}\kappa^{\frac{2}{5}}\nu^{2}R_{\kappa}^{2}\right) ^{-\frac{3}{2}}
\end{align*}
It is straightforward to calculate
\begin{align*}
f'(\nu)=\kappa^{\frac{2}{5}}R_{\kappa}\frac{\left( \kappa^{-\frac{2}{5}}+\frac{2\pi}{9}\kappa^{\frac{2}{5}}\nu^{2}R_{\kappa}^{2}\right) ^{\frac{3}{2}}-\frac{2\pi}{3}\kappa^{\frac{2}{5}}\nu^{2}R_{\kappa}^{2}\left( \kappa^{-\frac{2}{5}}+\frac{2\pi}{9}\kappa^{\frac{2}{5}}\nu^{2}R_{\kappa}^{2}\right) ^{\frac{1}{2}}}{\left( \kappa^{-\frac{2}{5}}+\frac{2\pi}{9}\kappa^{\frac{2}{5}}\nu^{2}R_{\kappa}^{2}\right) ^{3}}=\kappa^{\frac{2}{5}}R_{\kappa}\frac{\kappa^{-\frac{2}{5}}-\frac{4\pi}{9}\kappa^{\frac{2}{5}}\nu^{2}R_{\kappa}^{2}}{\left( \kappa^{-\frac{2}{5}}+\frac{2\pi}{9}\kappa^{\frac{2}{5}}\nu^{2}R_{\kappa}^{2}\right) ^{\frac{5}{2}}}
\end{align*}
and the maximum is attained when \(\nu=\frac{3}{2\sqrt{\pi}}\kappa^{-\frac{2}{5}}R_{\kappa}^{-1}\). Therefore we have
\begin{align*}
\max\left|C_{1}(\bar{\rho}_{\kappa}) \right| \lesssim f(\frac{3}{2\sqrt{\pi}}\kappa^{-\frac{2}{5}}R_{\kappa}^{-1})\lesssim\kappa^{\frac{3}{5}}
\end{align*}
Since \(\kappa^{\frac{3}{5}}\ll\frac{\kappa}{\log\kappa}\) when \(\kappa\) is large enough, there exists small enough \(\epsilon>0\) such that \(|C_{1}(\bar{\rho}_{\kappa})|\le \epsilon\mu_{0}\).\\
{\bf Case\ 3:} \(1\le\gamma<\frac{6}{5}\)\\
Let \(\bar{\rho}_{\ast}\) be a gaseous steady state and \(\bar{\rho}_{\kappa}(r)=\kappa\bar{\rho}_{\ast}(\kappa^{1-\gamma/2}r)\). Then there exist \(c>0\) such that
\begin{align}\label{333}
\begin{split}
\left| r^{\frac{2}{2-\gamma}}\bar{\rho}_{\kappa}(r)-\left(\frac{1}{2\pi}\frac{\gamma(4-3\gamma)}{(2-\gamma)^{2}}
\right)^{\frac{1}{2-\gamma}} \right| =&\left| r^{\frac{2}{2-\gamma}}\bar{\rho}_{\kappa}(r)-\upsilon^{\ast}_{1}\right| \lesssim\left( \kappa^{1-\frac{\gamma}{2}}r\right) ^{-c}\\
\left| r^{\frac{2}{2-\gamma}-3}\bar{m}_{\kappa}(r)-\frac{2\gamma}{2-\gamma}\left(\frac{1}{2\pi}\frac{\gamma(4-3\gamma)}{(2-\gamma)^{2}}
\right)^{\frac{\gamma-1}{2-\gamma}}\right| =&\left|r^{\frac{2}{2-\gamma}-3}\bar{m}_{\kappa}(r)-\upsilon^{\ast}_{2}\right| \lesssim \left( \kappa^{1-\frac{\gamma}{2}}r\right) ^{-c}\\
R_{\kappa}\to R_{\infty}:=(\upsilon^{\ast}_{1})^{1-\frac{\gamma}{2}}=&\left(\frac{1}{2\pi}\frac{\gamma(4-3\gamma)}{(2-\gamma)^{2}}
\right)^{\frac{1}{2}}\quad\quad as\quad\quad \kappa\to\infty,
\end{split}
\end{align}
where
\begin{align*}
\bar{m}_{\kappa}(r)=4\pi\int_{0}^{r}s^{2}\bar{\rho}_{\kappa}(s)ds.
\end{align*} 
These are known results (see \cite{L}). Based on the above formula we first consider the bound on the coefficient \(C_{1}(\bar{\rho}_{\kappa})\). From \(\eqref{5.55}\), we have
\begin{align*}
\left| C_{1}(\bar{\rho}_{\kappa})\right| \lesssim- \frac{1}{\bar{\rho}_{\kappa}}\partial_{y}\bar{\rho}_{\kappa}^{\gamma}=\frac{4\pi}{y^{2}}\int_{0}^{y}s^{2}\bar{\rho}_{\kappa}(s)ds
\end{align*}
Let \(y=\nu R_{\kappa}\). when \(0\le \nu\le \kappa^{-(1-\frac{\gamma}{2})}\) we have
\begin{align*}
\left| C_{1}(\bar{\rho}_{\kappa})\right| \lesssim&\frac{1}{\nu^{2} R_{\kappa}^{2}}\int_{0}^{\nu R_{\kappa}}s^{2}\kappa\bar{\rho}_{\ast}(\kappa^{1-\frac{\gamma}{2}}s)ds
=\nu R_{\kappa}\int_{0}^{1}z^{2}\kappa\bar{\rho}_{\ast}(\kappa^{1-\frac{\gamma}{2}}\nu R_{\kappa}z)dz\\
\lesssim&\kappa^{-(1-\frac{\gamma}{2})}\kappa R_{\kappa}\int_{0}^{1}z^{2}\Arrowvert\bar{\rho}_{\ast}\Arrowvert_{\infty}dz
\lesssim \kappa^{\frac{\gamma}{2}}
\end{align*}
When \(\kappa^{-(1-\frac{\gamma}{2})}\le\nu\le 1\) we have
\begin{align*}
\left| C_{1}(\bar{\rho}_{\kappa})\right| \lesssim\frac{1}{\nu^{2} R_{\kappa}^{2}}\int_{0}^{\nu R_{\kappa}}s^{2}\kappa\bar{\rho}_{\ast}(\kappa^{1-\frac{\gamma}{2}}s)ds=
\frac{1}{\nu^{2} R_{\kappa}^{2}}\int_{0}^{\nu R_{\kappa}\kappa^{1-\frac{\gamma}{2}}}\kappa\kappa^{-3(1-\frac{\gamma}{2})}z^{2}\bar{\rho}_{\ast}(z)dz.
\end{align*}
Fix some large \(M>0\) and divide the region of integration into two parts. Then we have
\begin{align*}
\left| C_{1}(\bar{\rho}_{\kappa})\right| \lesssim&\frac{\kappa^{1-3(1-\frac{\gamma}{2})}}{\nu^{2} R_{\kappa}^{2}}\int_{M}^{\nu R_{\kappa}\kappa^{1-\frac{\gamma}{2}}}\bar{\rho}_{\ast}(z)z^{2}dz+\frac{\kappa^{1-3(1-\frac{\gamma}{2})}}{\nu^{2} R_{\kappa}^{2}}\int_{0}^{M}\bar{\rho}_{\ast}(z)z^{2}dz\\
\lesssim&\frac{\kappa^{1-3(1-\frac{\gamma}{2})}}{\nu^{2} R_{\kappa}^{2}}\int_{M}^{\nu R_{\kappa}\kappa^{1-\frac{\gamma}{2}}}z^{2-\frac{2}{2-\gamma}}dz+\frac{\kappa^{1-3(1-\frac{\gamma}{2})}}{\nu^{2} R_{\kappa}^{2}}\int_{0}^{M}\Arrowvert\bar{\rho}_{\ast}\Arrowvert_{\infty}z^{2}dz\\
\lesssim&\frac{\kappa^{1-3(1-\frac{\gamma}{2})}}{\nu^{2} R_{\kappa}^{2}}\left( \nu R_{\kappa}\kappa^{1-\frac{\gamma}{2}}\right) ^{3-\frac{2}{2-\gamma}}+\frac{\kappa^{1-3(1-\frac{\gamma}{2})}}{\nu^{2} R_{\kappa}^{2}}\int_{0}^{M}\Arrowvert\bar{\rho}_{\ast}\Arrowvert_{\infty}z^{2}dz\\
\lesssim&\nu^{1-\frac{2}{2-\gamma}}\kappa^{1-3(1-\frac{\gamma}{2})+(1-\frac{\gamma}{2})(3-\frac{2}{2-\gamma})}+\nu^{-2}\kappa^{1-3(1-\frac{\gamma}{2})}\lesssim\kappa^{\frac{\gamma}{2}}
\end{align*}
According to previous discussion, as long as the linear growth mode \(\mu_{0}\) increase in magnitude with respect to \(\kappa\) is larger than \(\frac{\gamma}{2}\), we will finish the proof. From \(\eqref{111}\) we have
\begin{align*}
\langle L_{\kappa}\chi,\chi\rangle=&\int_{0}^{R_{\kappa}}\gamma\bar{\rho}_{\kappa}^{\gamma}y^{4}(\partial_{y}\chi)^{2}+(4-3\gamma)y^{3}\chi^{2}\partial_{y}\bar{\rho}_{\kappa}^{\gamma}dy+3\gamma R_{\kappa}^{3}\chi(R_{\kappa})^{2}\\
=&\int_{0}^{R_{\kappa}}\gamma\bar{\rho}_{\kappa}^{\gamma}y^{4}(\partial_{y}\chi)^{2}-(4-3\gamma)y\chi^{2}\bar{m}_{\kappa}\bar{\rho}_{\kappa}dy+3\gamma R_{\kappa}^{3}\chi(R_{\kappa})^{2}.
\end{align*}
Fix \(\nu>0\), and suppose
\begin{align}\label{444}
	\chi(y)=\left\{
\begin{aligned}
&\nu^{-a}\kappa^{a(1-\frac{\gamma}{2})},\quad\quad 0\le y\le \nu\kappa^{-(1-\frac{\gamma}{2})}\\
&y^{-a},\quad\quad\nu\kappa^{-(1-\frac{\gamma}{2})}\le y\le R_{\kappa}
\end{aligned}
\right.,
\end{align}
we have
\begin{align}\label{5.60}
\begin{split}
\langle L_{\kappa}\chi,\chi\rangle\le&\int_{\nu\kappa^{-(1-\frac{\gamma}{2})}}^{R_{\kappa}}\gamma\bar{\rho}_{\kappa}^{\gamma}y^{4}(\partial_{y}\chi)^{2}-(4-3\gamma)y\chi^{2}\bar{m}_{\kappa}\bar{\rho}_{\kappa}dy+3\gamma R_{\kappa}^{3-2a}\\
\le&\int_{\nu\kappa^{-(1-\frac{\gamma}{2})}}^{R_{\kappa}}\gamma\left( 1+\epsilon_{1}\right) ^{\gamma}\left( \upsilon^{\ast}_{1}\right) ^{\gamma}y^{5-\frac{2+\gamma}{2-\gamma}}\left( \partial_{y}\chi\right) ^{2}-\frac{2\gamma(4-3\gamma)}{2-\gamma}\left( 1-\epsilon_{2}\right) ^{2}\left( \upsilon^{\ast}_{1}\right) ^{\gamma}y^{4-\frac{4}{2-\gamma}}\chi^{2}dy+3\gamma R_{\kappa}^{3-2a}\\
=&\gamma\left( \upsilon^{\ast}_{1}\right) ^{\gamma}\left( a^{2}\left( 1+\epsilon_{1}\right) ^{\gamma}-2\left( 1-\epsilon_{2}\right) ^{2}\left( 3-\frac{2}{2-\gamma}\right) \right) \int_{\nu\kappa^{-(1-\frac{\gamma}{2})}}^{R_{\kappa}}y^{3-\frac{2+\gamma}{2-\gamma}-2a}dy,
\end{split}
\end{align}
and
\begin{align*}
\langle\chi,y^{4}\bar{\rho}_{\kappa}\chi\rangle=\int_{0}^{R_{\kappa}}y^{4}\bar{\rho}_{\kappa}\chi^{2}dy=\int_{\kappa^{-(1-\frac{\gamma}{2})}}^{R_{\kappa}}y^{4-2a}\bar{\rho}_{\kappa}dy+\int_{0}^{\kappa^{-(1-\frac{\gamma}{2})}}y^{4}\bar{\rho}_{\kappa}\kappa^{a(1-\frac{\gamma}{2})}dy:=A+B.
\end{align*}
We estimate the two parts respectively. With \(\eqref{333}\), we have
\begin{align}\label{5.61}
A\lesssim\int_{\kappa^{-(1-\frac{\gamma}{2})}}^{R_{\kappa}}y^{4-2a-\frac{2}{2-\gamma}}dy
\end{align}
and
\begin{align}\label{5.62}
B=\int_{0}^{\kappa^{-(1-\frac{\gamma}{2})}}y^{4}\kappa^{1+a(1-\frac{\gamma}{2})}\bar{\rho}_{\ast}(\kappa^{1-\frac{\gamma}{2}}y)dy\le\kappa^{1+a(1-\frac{\gamma}{2})}\Arrowvert\bar{\rho}_{\ast}\Arrowvert_{\infty}\int_{0}^{\kappa^{-(1-\frac{\gamma}{2})}}y^{4}dy\lesssim \kappa^{1+(a-5)(1-\frac{\gamma}{2})}.
\end{align}
Combining \(\eqref{5.60}\), \(\eqref{5.61}\) and \(\eqref{5.62}\), if there exists some constant \(a>0\) such that
\begin{align}\label{5.63}
\left\{
\begin{aligned}
&-(1-\frac{\gamma}{2})(4-\frac{2+\gamma}{2-\gamma}-2a)>\frac{\gamma}{2}\\
&a^{2}\left( 1+\epsilon_{1}\right) ^{\gamma}-2\left( 1-\epsilon_{2}\right) ^{2}\left( 3-\frac{2}{2-\gamma}\right)<0\\
&4-\frac{2}{2-\gamma}-2a\ge-1\\
&1+(a-5)(1-\frac{\gamma}{2})\le 0
\end{aligned}
\right.
\end{align}
hold, then it means that \(\mu_{0}\) has a larger magnitude than \(\kappa\). we can take large enough \(\kappa\) such that \(|C_{1}(\bar{\rho}_{\kappa})|\le \epsilon\mu_{0}\) for small enough \(\epsilon>0\). By simplifying \(\eqref{5.63}\), we only need to find the constant \(a>0\) that satisfies
\begin{align*}
2-\frac{1}{2-\gamma}<a<\sqrt{\frac{(1-\epsilon_{2})^{2}}{(1+\epsilon_{1})^{\gamma}}
\left( 6-\frac{4}{2-\gamma}\right) }.
\end{align*}
It is obvious that there exists a constant \(a>0\) satisfying the above inequality when \(\epsilon_{1}\) and \(\epsilon_{2}\) are small enough (equivalent to picking sufficiently large \(\nu\) in \(\eqref{444}\)). This completes the argument.

For the non-top order terms on the left-hand side of \(\eqref{6.1}\), it is straightforward to be bounded by applying Sobolev interpolation inequality (the reason is that the coefficient \(C_{2}(\bar{\rho}_{\kappa})\) may have a faster growth rate than \(\mu_{0}\) as \(\kappa\to\infty\)). The following we roughly illustrate the idea.
\begin{align}\label{5.56}
\int_{0}^{t}\calE_{l-1}ds=\sum_{j=0}^{l-1}\int_{0}^{t}\int_{\calB(s)}|\partial_{t,x}B^{j}u|^{2}dxds+\sum_{j=0}^{l-1}\int_{0}^{t}\int_{\partial\calB(\tau)}|B^{j+1}u|^{2}dSd\tau+\sum_{j=0}^{l}\int_{0}^{t}\int_{\calB(s)}|\partial_{t,x}B^{j}\varepsilon|^{2}dxds.
\end{align}
The first term on the right-hand side of \(\eqref{5.56}\) can be written as
\begin{align}\label{5.57}
\begin{split}
&\sum_{j=0}^{l-1}\int_{0}^{t}\int_{\calB(s)}|\partial_{t,x}B^{j}u|^{2}dxds\\
\le&\theta_{1}\int_{0}^{t}\int_{\calB(s)}|\partial_{t,x}B^{l}u|^{2}dxds+C_{\theta_{1}}\int_{0}^{t}\int_{\calB(s)}|\partial_{t,x}u|^{2}dxds\\
\le&\theta_{1}\int_{0}^{t}\int_{\calB(s)}|\partial_{t,x}B^{l}u|^{2}dxds+\theta_{2}C_{\theta_{1}}\int_{0}^{t}\int_{\calB(s)}|\partial_{t,x}^{2}u|^{2}dxds+C_{\theta_{1}}C_{\theta_{2}}\int_{0}^{t}\int_{\calB(s)}|u|^{2}dxds,
\end{split}
\end{align}
where we apply Sobolev interpolation inequality to time norm and space norm of Lagrange coordinates respectively (in this case, \(B\sim\partial_{t}\) and \(\partial_{x}\sim\nabla\)). For some large \(\kappa\), we take \(\theta_{1},\theta_{2}\) small enough such that
\begin{align}
C_{2}(\bar{\rho})(\theta_{1}+\theta_{2}C_{\theta_{1}})\lesssim\epsilon\mu_{0}.
\end{align} 
Then the terms on the right-hand side of \(\eqref{5.57}\) can be bounded. The remainder terms on the the right-hand side of \(\eqref{5.56}\) is bounded by the trace theorem and the same way. Combined with the previous discussion, we have completed the proof of \(\eqref{6.1}\).

\bibliographystyle{plain}
\bibliography{ee}

\bigskip

\centerline{\scshape Zeming Hao}
\smallskip
{\footnotesize
	\centerline{School of Mathematics and Statistics, Wuhan University}
	\centerline{Wuhan, Hubei 430072, China}
	\centerline{\email{2021202010062@whu.edu.cn}}
}

\medskip

\centerline{\scshape Shuang Miao}
\smallskip
{\footnotesize
	\centerline{School of Mathematics and Statistics, Wuhan University}
	\centerline{Wuhan, Hubei 430072, China}
	\centerline{\email{shuang.m@whu.edu.cn}}
} 
\end{document}